\documentclass{amsart}
\usepackage{thmtools}
\usepackage{thm-restate}
\usepackage{float}
\usepackage[usenames,dvipsnames]{xcolor}
\usepackage{mathtools}
\usepackage{amssymb}
\usepackage{hyperref}
\usepackage{graphicx}
\usepackage{pinlabel}
\usepackage[rightcaption]{sidecap}
\usepackage{wrapfig}
\usepackage{caption}
\usepackage{subcaption}
\usepackage[all]{xy}
\usepackage[english]{babel}
\addto\extrasenglish{}
\newtheorem{theorem}{Theorem}[section]
\theoremstyle{definition}
\newtheorem{definition}[theorem]{Definition}
\newtheorem{proposition}[theorem]{Proposition}
\newtheorem{example}[theorem]{Example}

\newtheorem{remark}[theorem]{Remark}

\newtheorem{corollary}[theorem]{Corollary}
\newtheorem{conjecture}[theorem]{Conjecture}
\usepackage{anysize} \marginsize{1in}{1in}{1in}{1in}

\hypersetup{colorlinks=true, linkcolor=NavyBlue, citecolor=NavyBlue}
\graphicspath{ {./Figures/} }
\numberwithin{equation}{section}

\begin{document}

\title{Heegaard diagrams for $5$-manifolds}
\author{Geunyoung Kim}
\address{Department of Mathematics \& Statistics, McMaster University, Hamilton, Ontario, Canada}
\email{kimg68@mcmaster.ca}

\begin{abstract}
We introduce a version of Heegaard diagrams for $5$-dimensional cobordisms with $2$- and $3$-handles, $5$-dimensional $3$-handlebodies, and closed $5$-manifolds. We show that every such smooth $5$-manifold can be represented by a Heegaard diagram, and that two Heegaard diagrams represent diffeomorphic $5$-manifolds if and only if they are related by certain moves. As an application, we construct Heegaard diagrams for $5$-dimensional cobordisms from the standard $4$-sphere to the Gluck twists along knotted $2$-spheres. This provides several statements equivalent to the Gluck twist being diffeomorphic to the standard $4$-sphere. 
\end{abstract}

\maketitle
\addtocontents{toc}{\protect\setcounter{tocdepth}{1}}

\section{Introduction}

We work in the smooth category throughout unless otherwise stated. A ($3$-dimensional) \textit{Heegaard diagram} is a triple $(\Sigma,\alpha,\beta)$, where $\Sigma$ is a closed, orientable, connected surface, and each of $\alpha$ and $\beta$ is the image of an embedding of a disjoint union of circles, as described in \autoref{def:Heegaard diagrams}  for $k=1$. From $\Sigma\times[-1,1]$, we can construct a $3$-manifold by attaching $2$-handles along $\alpha\times\{-1\}$ and $\beta\times\{1\}$, resulting in a $3$-dimensional cobordism between two closed surfaces $\Sigma(\alpha)$ and $\Sigma(\beta)$. Here, $\Sigma(\alpha)$ and $\Sigma(\beta)$ are obtained by performing surgery on $\Sigma$ along $\alpha$ and $\beta$, respectively. 

If $\Sigma(\alpha)$ is diffeomorphic to the $2$-sphere $S^2$, then capping it off yields a $3$-manifold with boundary $\Sigma(\beta)$. If $\Sigma(\beta)$ is diffeomorphic to $S^2$ as well, then capping it off results in a closed $3$-manifold. It is well known that every $3$-manifold (with two boundary components, one boundary component, or no boundary component) can be represented by a Heegaard diagram, and that two Heegaard diagrams represent diffeomorphic $3$-manifolds if and only if they are related by isotopies, handle slides, stabilizations, and diffeomorphisms \cite{reidemeister1933dreidimensionalen,singer1933three}.

In this paper, we extend this approach to dimension $5$ by introducing ($5$-dimensional) Heegaard diagrams for $5$-manifolds (see \autoref{def:Heegaard diagrams} for $k=2$) and proving that two ($5$-dimensional) such diagrams represent diffeomorphic $5$-manifolds if and only if they are related by isotopies, handle slides, stabilizations, and diffeomorphisms (\autoref{thm: Heegaard diagrams representing diffeomorphic 5-manifolds}).

A \textit{$k$-link} $\alpha\subset \Sigma$ in an $n$-manifold $\Sigma$ is the image of an embedding \[f:\coprod^i S^k\hookrightarrow \Sigma,\] where $\coprod^i S^k$ denotes a disjoint union of $i$ copies of the $k$-sphere. We call $\alpha$ a \textit{$k$-knot} if $i=1$. We say that $\alpha$ has  \textit{trivial normal bundle} if a closed regular neighborhood $\nu(\alpha)$ of $\alpha$ in $\Sigma$ is diffeomorphic to $\coprod^i(S^k\times B^{n-k})$.

A \textit{framing} of $\alpha$ is an embedding \[\phi:\coprod ^i (S^k\times B^{n-k})\hookrightarrow \Sigma\] such that
    \begin{enumerate}
        \item $\phi(\coprod^i(S^k\times\{0\}))=\alpha,$
        \item $\phi(\coprod ^i (S^k\times B^{n-k}))=\nu(\alpha)$.
    \end{enumerate} 
The pair $(\alpha,\phi)$ is called a \textit{framed $k$-link}. If the framing is understood from context, we simply refer to $\alpha$.

A \textit{$k$-surgery on $\Sigma$ along $(\alpha,\phi)$} is the $n$-manifold \[\Sigma(\alpha,\phi)=\overline{\Sigma\setminus \nu(\alpha)}\cup_{\phi|_{\coprod^i(S^{k}\times S^{n-k-1})}}\left(\coprod^i(B^{k+1}\times S^{n-k-1})\right)\] obtained by removing a regular neighborhood of $\alpha$ and gluing in $\coprod^i(B^{k+1}\times S^{n-k-1})$ along the boundary using $\phi$. If the framing is understood, we may simply write: \[\Sigma(\alpha)=\overline{\Sigma\setminus\nu(\alpha)}\cup_{\alpha}(\coprod^i(B^{k+1}\times S^{n-k-1})),\] which we refer to as \textit{$k$-surgery on $\Sigma$ along $\alpha$}. 

The $(n+1)$-manifold
\begin{align*}M_{\alpha}&=(\Sigma\times[0,1])\cup_{\alpha\times\{1\}}\left(\coprod^i(B^{k+1}\times B^{n-k})\right)\\&=(\Sigma\times[0,1])\cup_{\phi\times\{1\}}\left(\coprod^i(B^{k+1}\times B^{n-k})\right)\end{align*} is called \textit{the manifold obtained from $\Sigma\times[0,1]$ by attaching $(k+1)$-handles along $\alpha\times\{1\}\subset\Sigma\times\{1\}$}, where the embedding $\phi\times\{1\}:\coprod^i(S^{k}\times B^k)\hookrightarrow\Sigma\times\{1\}$ is defined by $(\phi\times\{1\})(x)=(\phi(x),1)$. Here, $B^{k+1}\times B^{n-k}$ is called an \textit{$(n+1)$-dimensional $(k+1)$-handle}. We call $S^k\times \{0\}$ the \textit{attaching sphere of the handle}, $S^k\times\{0\}$ the \textit{attaching region}, $\{0\}\times S^{n-k-1}$ the \textit{belt sphere}, and $B^{k+1}\times S^{n-k-1}$ the \textit{belt region}.

Note that the boundary of $M_{\alpha}$ is given by $\partial M_\alpha=\partial_{-}M_\alpha\coprod \partial_{+}M_{\alpha}$, where $\partial_{-}M_\alpha$ is identified with $\Sigma=\Sigma\times\{0\}$ and $\partial_{+}M_{\alpha}$ is identified with the $k$-surgery $\Sigma(\alpha)$. Thus, we write $\partial M_{\alpha}=\Sigma\coprod\Sigma(\alpha)$. Moreover, $M_{\alpha}$ can be considered as the manifold obtained from $\Sigma(\alpha)\times[0,1]$ by attaching $(n-k)$-handles along the belt spheres of the $k$-handles.

\begin{definition}\label{def:Heegaard diagrams}
A \textit{$(2k+1)$-dimensional Heegaard diagram} is a triple $(\Sigma,\alpha,\beta)$ such that
    \begin{enumerate}
        \item $\Sigma$ is a closed, orientable, connected $2k$-manifold,
        \item $\alpha$ is a framed $k$-link in $\Sigma$,
        \item $\beta$ is a framed $k$-link in $\Sigma$.
    \end{enumerate}
\end{definition}
    
\begin{remark}\hfill
    \begin{enumerate}
        \item $\alpha$ and $\beta$ may intersect transversely at points. Thus, the union $\alpha\cup\beta$ is immersed, while each of $\alpha$ and $\beta$ is embedded.
        \item When $k=1$, this definition coincides with the classical definition of Heegaard diagrams for $3$-manifolds.
        \item In this paper, we focus on the case $k=2$, i.e., $5$-dimensional Heegaard diagrams.
        \item The set of isotopy classes of framings of $\alpha$ (respectively, $\beta$) is canonically identified with $\pi_k(SO(k))$ after choosing a fixed reference framing. When $k=1$ or $2$, any two framings of $\alpha$ (respectively, $\beta$) are isotopic since $\pi_k(SO(k))$ is trivial. Thus, we can choose an arbitrary framing of $\alpha$ (respectively, $\beta$). In general, $\pi_k(SO(k))$ is non-trivial for $k>2$, but in particular, $\pi_6(SO(6))$ is trivial.
        \item Hughes, Kim, and Miller showed that any surface in a $4$-manifold can be represented by a banded unlink diagram if it is embedded \cite{hughes2020isotopies}, or by a singular banded unlink diagram if it is immersed \cite{hughes2021band}; see \autoref{def:singular banded unlink diagram} and \autoref{thm: existence of singular banded unlink diagram}. Therefore, any $5$-dimensional Heegaard diagram $(\Sigma,\alpha,\beta)$ can be represented by a singular banded unlink diagram $(\mathcal{K},L,B)=(\mathcal{K},J_1\cup J_2, B_1\cup B_2)$, where $(\mathcal{K},J_1,B_1)$ and $(\mathcal{K},J_2,B_2)$ are banded unlink diagrams of $(\Sigma,\alpha)$ and $(\Sigma,\beta)$, respectively. In \autoref{sec: Kirby diagrams for surgeries}, we provide an algorithm for constructing a Kirby diagram of the $2$-surgery $\Sigma(\alpha)$ (respectively, $\Sigma(\beta)$) from the banded unlink diagram $(\mathcal{K},J_1,B_1)$ (respectively, $(\mathcal{K},J_2,B_2)$); see \autoref{pro: kirby diagram for 2-surgeries}. This generalizes Gompf and Stipsicz's algorithm for obtaining a Kirby diagram of the complement of a ribbon surface in a $4$-ball $B^4$ in \cite{gompf20234}. Therefore, we can easily determine the boundary $4$-manifolds of the $5$-dimensional cobordism $M_\alpha\cup_\Sigma M_\beta$ defined below; see \autoref{fig: cobordism from S^4 to non-simply connected homology sphere}.
    \end{enumerate}
\end{remark}

\begin{definition}\label{def: (2k+1)-dimensional cobordism}
Let $(\Sigma,\alpha,\beta)$ be a $(2k+1)$-dimensional Heegaard diagram. We define  \[M_\alpha\cup_{\Sigma}M_\beta=M_\alpha\cup M_\beta=\Sigma\times[-1,1]\cup_{\alpha\times\{-1\}\coprod\beta\times\{1\}} ((k+1)\text{-handles})\] to be the $(2k+1)$-manifold obtained from $\Sigma\times[-1,1]$ by attaching $(2k+1)$-dimensional $(k+1)$-handles along $\alpha\times\{-1\}\coprod\beta\times\{1\}$, where \[M_\alpha=\Sigma\times[-1,0]\cup_{\alpha\times\{-1\}}((k+1)\text{-handles})\quad \text{and}\quad M_\beta=\Sigma\times[0,1]\cup_{\beta\times\{1\}}((k+1)\text{-handles}).\] Here, $M_\alpha\cap M_\beta=\Sigma\times\{0\}$, which is identified with $\Sigma$.
\end{definition}

\begin{remark}\hfill
\begin{enumerate}
    \item The boundary of $M_\alpha\cup_\Sigma M_\beta$ is given by $\partial(M_\alpha\cup_\Sigma M_{\beta})=\Sigma(\alpha)\coprod\Sigma(\beta)$, where $\Sigma(\alpha)$ and $\Sigma(\beta)$ are the results of performing $k$-surgery on $\alpha$ and $\beta$ in $\Sigma$, respectively.
    \item $M_\alpha\cup_\Sigma M_\beta$ can be interpreted as an $(2k+1)$-dimensional cobordism from $\Sigma(\alpha)$ to $\Sigma(\beta)$, obtained from $\Sigma(\alpha)\times[0,1]$ by attaching $k$-handles to $\Sigma(\alpha)\times\{1\}$ and $(k+1)$-handles to $\Sigma$. In other words, $\alpha$ corresponds to the set of the belt spheres of the $k$-handles and $\beta$ corresponds to the set of the attaching spheres of the $(k+1)$-handles.
    \item When $k=1$, if $\Sigma(\alpha)$ is diffeomorphic to $S^2$, then a $3$-ball can be attached to $M_\alpha\cup_\Sigma M_\beta$ along $\Sigma(\alpha)$ to obtain a $3$-manifold $\widehat{M_\alpha}\cup_\Sigma M_\beta$ with one boundary component $\Sigma(\beta)$. If $\Sigma(\beta)$ is also diffeomorphic to $S^2$, then a $3$-handle can be attached to $\widehat{M_\alpha}\cup_\Sigma M_\beta$ along $\Sigma(\beta)$ to obtain a closed $3$-manifold $\widehat{M_\alpha}\cup_\Sigma \widehat{M_\beta}$. There is a unique way to attach, up to diffeomorphism, a $3$-ball along the $2$-sphere boundary because every self-diffeomorphism of $S^2$ extends to a self-diffeomorphism of $B^3$, which is known as Alexander's trick \cite{alexander1923deformation}.
\end{enumerate}
\end{remark}

From now on, we focus on $5$-dimensional Heegaard diagrams and will simply refer to them as Heegaard diagrams.

\begin{definition}\label{def: some 5-manifolds from Heegaard diagrams}
Let $(\Sigma,\alpha,\beta)$ be a Heegaard diagram. We define the following $5$-manifolds:
    \begin{enumerate}
        \item If $\Sigma(\alpha)$ is diffeomorphic to $\#^k(S^1\times S^3)$, let \[\widehat{M_\alpha}\cup_\Sigma M_\beta=(M_\alpha\cup_\Sigma M_\beta)\cup_g(\natural^k(S^1\times B^4))\] for some diffeomorphism $g:\#^k(S^1\times S^3)\rightarrow \Sigma(\alpha)$, where $\widehat{M_\alpha}=M_\alpha\cup_g(\natural^k(S^1\times B^4))$.
        \item If $\Sigma(\beta)$ is diffeomorphic to $\#^r(S^1\times S^3)$, let \[\widehat{M_\beta}\cup_\Sigma\widehat{M_\beta}=(\widehat{M_\alpha}\cup_\Sigma M_\beta)\cup_h(\natural^r(S^1\times B^4))\] for some diffeomorphism $h:\#^r(S^1\times S^3)\rightarrow \Sigma(\beta)$, where $\widehat{M_\beta}=M_\beta\cup_h(\natural^r(S^1\times B^4))$.
    \end{enumerate}
\end{definition}

\begin{remark}\hfill
    \begin{enumerate}
        \item $M_\alpha$ is uniquely determined up to diffeomorphism by the isotopy class of $\alpha$ since the set of framings of the attaching sphere of a $3$-handle is identified with $\pi_2(SO(2))=1$. The same holds for $M_\beta$.
        \item $\widehat{M_\alpha}$ is uniquely determined up to diffeomorphism by Cavicchioli and Hegenbarth \cite{cavicchioli1993determination}, who showed that any self-diffeomorphism of $\#^k(S^1\times S^3)$ extends to a self-diffeomorphism of $\natural^k (S^1\times B^4)$. This result generalizes a theorem of Laudenbach and Poénaru \cite{laudenbach1972note}. Aribi, Courte, Golla, and Moussard used this result in their development of quadrisection diagrams for closed $5$-manifolds \cite{aribi2023multisections}.
    
        We can view $\widehat{M_\alpha}$ as a $5$-manifold obtained from $\Sigma\times [-1,0]$ by attaching $|\alpha|$ $3$-handles, $k$ $4$-handles, and a $5$-handle, where $|\alpha|$ is the number of components of $\alpha$. Alternatively, it can be viewed as a $5$-dimensional $2$-handlebody, which is the union of a $0$-handle, $k$ $1$-handles, and $|\alpha|$ $2$-handles. The Euler characteristic of $\widehat{M_{\alpha}}$ is given by $\chi(\widehat {M_\alpha})=1-k+|\alpha|$. A similar argument applies to $\widehat{M_\beta}$.
        \item $M_\alpha\cup_\Sigma M_\beta$ can be viewed as a $5$-dimensional cobordism from $\Sigma(\alpha)$ to $\Sigma(\beta)$, constructed by attaching $2$- and $3$-handles. Specifically, $\alpha$ and $\beta$ correspond to the set of the belt spheres of the $2$-handles and the set of the attaching spheres of the $3$-handles, respectively.
        \item $\widehat{M_\alpha}\cup_\Sigma M_\beta$ can be viewed as a $5$-dimensional $3$-handlebody, which is the union of a $0$-handle, $k$ $1$-handles, $|\alpha|$ $2$-handles, and $|\beta|$ $3$-handles. The Euler characteristic is given by $\chi(\widehat {M_\alpha}\cup M_\beta)=1-k+|\alpha|-|\beta|$.
        \item $\widehat{M_\alpha}\cup_\Sigma \widehat{M_\beta}$ can be viewed as a closed $5$-manifold, which is the union of a $0$-handle, $k$ $1$-handles, $|\alpha|$ $2$-handles, $|\beta|$ $3$-handles, $r$ $4$-handles, and a $5$-handle. The Euler characteristic is given by $\chi(\widehat {M_\alpha}\cup \widehat{M_\beta})=1-k+|\alpha|-|\beta|+r-1$.
    \end{enumerate}    
\end{remark}

We show that every $5$-dimensional cobordism with $2$- and $3$-handles, every $5$-dimensional $3$-handlebody, and every closed, connected, orientable $5$-manifold can be represented by a Heegaard diagram.

\begin{restatable*}{theorem}{Heegaardexistence}Let $X$ be a $5$-dimensional cobordism with $2$- and $3$-handles, a $5$-dimensional $3$-handlebody, or closed, connected, orientable $5$-manifold.
    \begin{enumerate}
        \item If $X$ is a $5$-dimensional cobordism with $2$- and $3$-handles, then $X$ is diffeomorphic to $M_\alpha\cup_{\Sigma} M_\beta$ for some Heegaard diagram $(\Sigma,\alpha,\beta).$
        \item If $X$ is a $5$-dimensional $3$-handlebody, then $X$ is diffeomorphic to $\widehat{M_\alpha}\cup_{\Sigma} M_\beta$ for some Heegaard diagram $(\Sigma,\alpha,\beta).$
        \item If $X$ is a closed, connected, orientable $5$-manifold, then $X$ is diffeomorphic to $\widehat{M_\alpha}\cup_{\Sigma} \widehat{M_\beta}$ for some Heegaard diagram $(\Sigma,\alpha,\beta).$
    \end{enumerate}
\end{restatable*}

We recall that an \textit{n-dimensional $k$-handlebody} is an $n$-manifold obtained from an $n$-ball $B^n$ by attaching handles of index up to $k$. The following theorem implies that every $5$-dimensional $2$-handlebody is the product of a $4$-dimensional $2$-handlebody and an interval.

\begin{theorem}[\cite{kim2025note}]\label{thm: product structures on handlebodies}
Fix $k\geq0$ and $n\geq2k+1$. Let $X$ be an $n$-dimensional $k$-handlebody. Then there exists a $2k$-dimensional $k$-handlebody $Y\subset X$ such that $X\cong Y\times B^{n-2k}$. 
\end{theorem}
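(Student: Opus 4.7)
My plan is to argue by induction on the number of handles of $X$. The base case $X = B^n$ is handled by the identification $B^n \cong B^{2k} \times B^{n-2k}$, taking $Y = B^{2k}$.

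For the inductive step, decompose $X = X' \cup_\phi h^i$, where $h^i = B^i \times B^{n-i}$ is an $i$-handle with $i \leq k$ attached along $\phi : S^{i-1} \times B^{n-i} \hookrightarrow \partial X'$, and $X'$ is an $n$-dimensional $k$-handlebody with one fewer handle. By induction, $X' \cong Y' \times B^{n-2k}$ for some $2k$-dimensional $k$-handlebody $Y'$, whose boundary decomposes as
\[
\partial X' = (\partial Y' \times B^{n-2k}) \cup_{\partial Y' \times S^{n-2k-1}} (Y' \times S^{n-2k-1}).
\]
The heart of the argument is to isotope $\phi$ to a product form $\phi_Y \times \mathrm{id}_{B^{n-2k}}$, for an embedding $\phi_Y : S^{i-1} \times B^{2k-i} \hookrightarrow \partial Y'$. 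Granting this, writing $h^i = B^i \times B^{2k-i} \times B^{n-2k}$ and setting $Y := Y' \cup_{\phi_Y} (B^i \times B^{2k-i})$ (a $2k$-dimensional $k$-handlebody), the handle attachments assemble to give $X \cong Y \times B^{n-2k}$.

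To produce the required isotopy of the attaching sphere $S := \phi(S^{i-1} \times \{0\})$, fix a $k$-dimensional spine $\Sigma_{Y'} \subset Y'$, so that $Y' \setminus \Sigma_{Y'}$ deformation retracts onto $\partial Y'$. The subset $\Sigma_{Y'} \times S^{n-2k-1} \subset \partial X'$ has dimension $n - k - 1$, and since
\[
\dim S + \dim(\Sigma_{Y'} \times S^{n-2k-1}) = (i-1) + (n-k-1) < n-1 = \dim \partial X'
\]
precisely in the range $i \leq k$, a general-position perturbation makes $S$ disjoint from $\Sigma_{Y'} \times S^{n-2k-1}$. The retraction $(Y' \setminus \Sigma_{Y'}) \searrow \partial Y'$ then pushes $S$ into the horizontal piece $\partial Y' \times B^{n-2k}$, and a straight-line contraction in the $B^{n-2k}$-coordinate finishes the isotopy of $S$ into $\partial Y' \times \{0\}$. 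The dimension condition $2(i-1)+1 \leq 2k-1$ ensures that the projection $S \to \partial Y'$ is generically an embedding, so every step above can be realized as an isotopy of embeddings rather than merely a homotopy.

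To upgrade this to an isotopy of the entire framed attaching map $\phi$, observe that the normal bundle of $S$ in $\partial X'$ splits as the direct sum of the normal of $S$ in $\partial Y'$ (rank $2k - i$) with the trivial rank-$(n-2k)$ factor coming from $B^{n-2k}$. Product framings correspond to classes in $\pi_{i-1}(O(2k-i))$, while arbitrary framings live in $\pi_{i-1}(O(n-i))$. The fibration $O(2k-i) \to O(n-i) \to V_{n-2k}(\mathbb{R}^{n-i})$, together with the $(2k-i-1)$-connectedness of the Stiefel manifold, makes $\pi_{i-1}(O(2k-i)) \to \pi_{i-1}(O(n-i))$ surjective exactly when $i \leq k$, so every framing is isotopic to a product one. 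The main technical obstacle I anticipate is keeping track of the smoothing across the corner $\partial Y' \times S^{n-2k-1}$ throughout these isotopies, but the dimension inequalities $i \leq k$ and $n \geq 2k+1$ are exactly sharp enough to make the required transversality and stable-range arguments go through.
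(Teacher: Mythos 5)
The paper does not include a proof of this theorem; it is imported from the reference \cite{kim2025note}, so I cannot compare your argument against the paper's own. Evaluated on its own terms, your proof is correct in outline and follows the standard ``stable range'' strategy one would expect for such a product-structure statement. The three dimension counts you isolate all line up with the hypotheses $i\le k$ and $n\ge 2k+1$: general position of $S^{i-1}$ against $\Sigma_{Y'}\times S^{n-2k-1}$ in $\partial X'$ needs $(i-1)+(n-k-1)<n-1$, i.e.\ $i\le k$; the Whitney-type condition for the projection to $\partial Y'$ to be perturbed to an embedding needs $2(i-1)<2k-1$, again $i\le k$; and the $(2k-i-1)$-connectivity of $V_{n-2k}(\mathbb{R}^{n-i})$ makes $\pi_{i-1}(O(2k-i))\to\pi_{i-1}(O(n-i))$ surjective for $i\le k$, which resolves the framing.

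Two places are written at a higher level than the rest and deserve a sentence more of care if this were to be formalized. First, the passage from the deformation retraction $(Y'\setminus\Sigma_{Y'})\searrow\partial Y'$ to an ambient isotopy of $\partial X'$ carrying $S$ into $\partial Y'\times B^{n-2k}$: this should be stated as a flow of a vector field on $\partial X'$ supported away from $\Sigma_{Y'}\times S^{n-2k-1}$ (radial near the spine, collar-parallel near $\partial Y'$), which moves the compact set $S$ into the horizontal slab in finite time; once $S$ is in $\partial Y'\times B^{n-2k}$, the straight-line contraction is an honest ambient isotopy of that slab, and isotopy extension then carries the whole framed tube. Second, for the framing step you invoke a reference ``product framing'' in order to turn the torsor over $\pi_{i-1}(O(n-i))$ into a group; this is supplied exactly by the normal-bundle splitting $\nu(S\subset\partial X')\cong\nu(S\subset\partial Y')\oplus\underline{\mathbb{R}}^{n-2k}$ once $S\subset\partial Y'\times\{0\}$, and it is worth saying so. The corner-smoothing issue you flag at the end is real but routine; $\partial Y'\times S^{n-2k-1}$ is a codimension-one bicollared submanifold of $\partial X'$ after smoothing, and the isotopies above can be taken transverse to it. None of this is a gap in the logic, just points where the write-up is compressed.
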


Given a Heegaard diagram $(\Sigma,\alpha,\beta)$, if the $2$-surgery $\Sigma(\alpha)$ is diffeomorphic to $\#^k(S^1\times S^3)$, then we can construct a $5$-dimensional $2$-handlebody $\widehat{M_\alpha}$. The following corollary is then immediate.

\begin{corollary}\label{thm: Heegaard diagrams for 5d 2handlebodies}
Let $(\Sigma,\alpha,\beta)$ be a Heegaard diagram. If $\Sigma(\alpha)\cong\#^k(S^1\times S^3)$, then there exists a $4$-dimensional $2$-handlebody $Y$ such that $\widehat{M_\alpha}\cong Y\times B^1$, and therefore $\Sigma$ is diffeomorphic to the double of $Y$.
\end{corollary}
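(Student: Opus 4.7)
The corollary is essentially a direct application of the quoted Theorem~\ref{thm: product structures on handlebodies}, so the plan is to verify the hypotheses and then unwind what a product structure says about the boundary.

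First I would recall, from the remark following Definition~\ref{def: some 5-manifolds from Heegaard diagrams}, that under the hypothesis $\Sigma(\alpha)\cong\#^k(S^1\times S^3)$ the manifold $\widehat{M_\alpha}$ has the structure of a $5$-dimensional $2$-handlebody (one $0$-handle, $k$ $1$-handles, and $|\alpha|$ $2$-handles, obtained by turning the handle decomposition of $M_\alpha\cup_g(\natural^k(S^1\times B^4))$ upside down). This is precisely the setting of Theorem~\ref{thm: product structures on handlebodies} with $k=2$ and $n=5$, since $n=5\ge 2k+1=5$.

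Next I would apply that theorem directly: it provides a $4$-dimensional $2$-handlebody $Y\subset\widehat{M_\alpha}$ and a diffeomorphism $\widehat{M_\alpha}\cong Y\times B^{5-4}=Y\times B^1$. This gives the first assertion without further work.

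For the second assertion, I would identify the boundaries. On one hand, after capping off $\Sigma(\alpha)$ the only remaining boundary component of $M_\alpha$ is $\Sigma\times\{0\}$, so $\partial\widehat{M_\alpha}\cong\Sigma$. On the other hand, for any compact $4$-manifold $Y$ with boundary one has
\[
\partial(Y\times B^1)=(Y\times\{-1\})\cup(\partial Y\times[-1,1])\cup(Y\times\{1\}),
\]
which is canonically diffeomorphic to the double $DY=Y\cup_{\partial Y}Y$. Combining the two identifications yields $\Sigma\cong DY$. There is no real obstacle here; the only point worth being careful about is checking that the handle decomposition of $\widehat{M_\alpha}$ indeed has no handles of index greater than $2$ (so that Theorem~\ref{thm: product structures on handlebodies} applies), which follows from the dual handle description already recorded in the remark.
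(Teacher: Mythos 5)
Your proof is correct and takes the same route the paper intends: since $\Sigma(\alpha)\cong\#^k(S^1\times S^3)$, the remark after Definition~\ref{def: some 5-manifolds from Heegaard diagrams} shows $\widehat{M_\alpha}$ is a $5$-dimensional $2$-handlebody, Theorem~\ref{thm: product structures on handlebodies} (with $n=5$, $k=2$) yields $\widehat{M_\alpha}\cong Y\times B^1$, and taking boundaries gives $\Sigma\cong\partial\widehat{M_\alpha}\cong\partial(Y\times B^1)\cong DY$. The paper records no proof (it calls the corollary ``immediate''), so your write-out simply supplies the details of that same argument.
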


We show that a Heegaard diagram of a $5$-manifold is unique up to a sequence of isotopies (\autoref{def: isotopy on Heegaard diagrams}), handle slides (\autoref{def: handle slides on Heegaard diagrams}), stabilizations (\autoref{def: stabilizations on Heegaard diagrams}), and diffeomorphisms (\autoref{def: diffeomorphisms on Heggaard diagrams}).

\begin{restatable*}{theorem}{Heegaardmoves}\label{thm: Heegaard diagrams representing diffeomorphic 5-manifolds}Let $(\Sigma,\alpha,\beta)$ and $(\Sigma',\alpha',\beta')$ be Heegaard diagrams.
    \begin{enumerate}
        \item $(\text{$5$-dimensional cobordisms})$ \\Assume $\Sigma(\alpha)\cong\Sigma'(\alpha')$ and $\Sigma(\beta)\cong\Sigma'(\beta')$.    
        Then $M_\alpha\cup_\Sigma M_\beta\cong M_{\alpha'}\cup_{\Sigma'} M_{\beta'}$ if and only if the diagrams are related by isotopies, handle slides, (first, second, and third) stabilizations, and diffeomorphisms.
        \item $(\text{$5$-dimensional $3$-handlebodies})$\\ Assume $\Sigma(\alpha)\cong\#^k(S^1\times S^3)$, $\Sigma'(\alpha')\cong\#^{k'}(S^1\times S^3)$, and $\Sigma(\beta)\cong\Sigma'(\beta')$ for some $k,k'\geq0$. Then $\widehat{M_\alpha}\cup_\Sigma M_\beta\cong \widehat{M_{\alpha'}}\cup_{\Sigma'} M_{\beta'}$ if and only if the diagrams are related by isotopies, handle slides, (first, second, and third) stabilizations, and diffeomorphisms.
        \item $(\text{Closed $5$-manifolds})$\\ Assume $\Sigma(\alpha)\cong\#^k(S^1\times S^3)$, $\Sigma'(\alpha')\cong\#^{k'}(S^1\times S^3)$, $\Sigma(\beta)\cong\#^r(S^1\times S^3)$, and $\Sigma'(\beta')\cong\#^{r'}(S^1\times S^3)$ for some $k,k',r,r'\geq0$. Then $\widehat{M_\alpha}\cup_\Sigma \widehat{M_\beta}\cong \widehat{M_{\alpha'}}\cup_{\Sigma'} \widehat{M_{\beta'}}$ if and only if the diagrams are related by isotopies, handle slides, (first, second, and third) stabilizations, and diffeomorphisms.
    \end{enumerate}
\end{restatable*}

Given a Heegaard diagram $(\Sigma,\alpha,\beta)$, first and third stabilizations do not change $\Sigma$, whereas the second stabilization changes $\Sigma$ to $\Sigma \#(S^2 \times S^2)$. This leads to \autoref{cor: Heegaard 4-manifolds can be used to distinguish 5-manifolds}, which can be used to distinguish two $5$-manifolds that are not diffeomorphic. In other words, if $\Sigma\#(\#^k(S^2\times S^2))\ncong\Sigma'\#(\#^{k'}(S^2\times S^2))$ for all $k,k'\geq0$, then $(\Sigma,\alpha,\beta)$ and $(\Sigma',\alpha',\beta')$ represent non-diffeomorphic $5$-manifolds. For example, since $\pi_1(\Sigma)\cong\pi_1(\Sigma\#(\#^k(S^2\times S^2)))$ for all $k\geq 0$, if $\pi_1(\Sigma)\ncong\pi_1(\Sigma')$, then two Heegaard diagrams $(\Sigma,\alpha,\beta)$ and $(\Sigma',\alpha',\beta')$ represent non-diffeomorphic $5$-manifolds. However, in dimension $3$, any two Heegaard surfaces are diffeomorphic after some stabilizations (connected sum Heegaard surface with $S^1\times S^1$) because every orientable surface is diffeomorphic to $\#^{m}(S^1\times S^1)$ for some $m\geq 0$. Therefore, Heegaard surface cannot be used to distinguish $3$-manifolds in this sense.

\begin{corollary}\label{cor: Heegaard 4-manifolds can be used to distinguish 5-manifolds}
If two Heegaard diagrams $(\Sigma,\alpha,\beta)$ and $(\Sigma',\alpha',\beta')$ represent diffeomorphic $5$-manifolds, then $\Sigma\#(\#^k(S^2\times S^2))$ and $\Sigma'\#(\#^{k'}(S^2\times S^2))$ are diffeomorphic for some $k,k'\geq0$.
\end{corollary}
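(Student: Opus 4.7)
The plan is to derive this corollary directly from \autoref{thm: Heegaard diagrams representing diffeomorphic 5-manifolds} by tracking the effect of each allowed move on the underlying Heegaard surface. Since $(\Sigma,\alpha,\beta)$ and $(\Sigma',\alpha',\beta')$ represent diffeomorphic $5$-manifolds, that theorem provides a finite sequence of Heegaard diagrams $D_0=(\Sigma,\alpha,\beta),\,D_1,\,\dots,\,D_n=(\Sigma',\alpha',\beta')$ whose underlying surfaces $\Sigma=\Sigma_0,\Sigma_1,\dots,\Sigma_n=\Sigma'$ are related, one step at a time, by an isotopy, a handle slide, a (first, second, or third) stabilization or its inverse, or a diffeomorphism.

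As noted in the paragraph preceding the corollary, isotopies, handle slides, first stabilizations, third stabilizations, and diffeomorphisms all leave the underlying surface unchanged up to diffeomorphism. Only second stabilizations (and their inverses) alter it, replacing $\Sigma_i$ by $\Sigma_i\#(S^2\times S^2)$ (respectively, undoing such a connected sum). Thus for each $i$, one of the following holds: $\Sigma_{i+1}\cong\Sigma_i$, $\Sigma_{i+1}\cong\Sigma_i\#(S^2\times S^2)$, or $\Sigma_i\cong\Sigma_{i+1}\#(S^2\times S^2)$. In every case there exist non-negative integers $c_i,d_i$ (at most one of them nonzero) such that
\[
\Sigma_i\,\#\,\bigl(\#^{c_i}(S^2\times S^2)\bigr)\;\cong\;\Sigma_{i+1}\,\#\,\bigl(\#^{d_i}(S^2\times S^2)\bigr).
\]

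Iterating this, I would chain the diffeomorphisms along the whole sequence, adding further $S^2\times S^2$ summands to both sides at each step so that the numbers of summands accumulate. This yields
\[
\Sigma\,\#\,\bigl(\#^{k}(S^2\times S^2)\bigr)\;\cong\;\Sigma'\,\#\,\bigl(\#^{k'}(S^2\times S^2)\bigr)
\]
with $k=\sum_i c_i$ and $k'=\sum_i d_i$, which is exactly the claim.

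This corollary is really a bookkeeping consequence of \autoref{thm: Heegaard diagrams representing diffeomorphic 5-manifolds}; the only subtlety is to verify that inverse second stabilizations can be absorbed by applying additional second stabilizations on the opposite side, so that only connected sums with $S^2\times S^2$ (never their removal) are needed in the final statement. Consequently, I do not expect any substantive obstacle once the main theorem is in hand.
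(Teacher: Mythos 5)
Your proof is correct and takes essentially the same approach as the paper, which states that the corollary ``follows immediately from the theorem above'' together with the preceding observation that only second stabilizations change $\Sigma$, and then to $\Sigma\#(S^2\times S^2)$. Your explicit bookkeeping of the sequence of diagrams and the accumulation of $S^2\times S^2$ summands on each side is just the detailed version of that remark.
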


We recall that the Gluck twist $X_K$ of an $(n+2)$-manifold $X$ along an $n$-knot $K$ with trivial normal bundle is an $(n+2)$-manifold obtained from $X$ by removing a closed regular neighborhood $\nu(K)$ of the $n$-knot and reattaching it in a non-trivial manner (\autoref{def:Gluck twist}). When $X$ is the standard $4$-sphere $S^4$, Gluck showed in \cite{gluck1962embedding} that every Gluck twist $S^4_K$ of $S^4$ along a $2$-knot $K$ is a homotopy $4$-sphere. Thus, by Freedman \cite{freedman1982topology}, it is homeomorphic to $S^4$. However, it remains unknown whether the Gluck twists are diffeomorphic to the standard $S^4$ in general.

We construct a natural $(n+3)$-dimensional cobordism $W_{X,K}$ from an $(n+2)$-manifold $X$ to the Gluck twist $X_K$ along a $n$-knot $K$, using a single $2$-handle and a single $(n+1)$-handle (\autoref{def: 5d cobordism gluck twist} and \autoref{thm: cobordism from X to XK}). Additionally, we present equivalent statements characterizing when the Gluck twists $S^4_K$ is diffeomorphic to the standard $S^4$.

\begin{restatable*}{theorem}{HeegaardGluck}\label{thm: Gluck twist and equivalent statements}
Let $S^4_K$ be the Gluck twist of $S^4$ along a $2$-knot $K\subset S^4$. Let \[(\Sigma,\alpha,\beta)=(S^2\tilde{\times}S^2, F, K\#F)\] be a Heegaard diagram, where $F$ is a fiber of $S^2\Tilde{\times}S^2$.
Then the following statements are equivalent:\hfill
    \begin{enumerate}
        \item $S^4_K$ is diffeomorphic to $S^4$.
        \item $M_\alpha\cup_\Sigma M_\beta\; (\cong W_{S^4,K})$ is diffeomorphic to a twice-punctured $S^2\Tilde{\times}S^3$.
        \item $(S^2\Tilde{\times}S^2, F, K\#F)$ and $(S^2\Tilde{\times}S^2,F,F)$ are related by isotopies, handle slides, stabilizations, and diffeomorphisms.
        \item $(S^2\Tilde{\times}S^2,K\#F)$ is diffeomorphic to $(S^2\Tilde{\times}S^2,F)$.
    \end{enumerate}
\end{restatable*}

Melvin showed that $S^4_K$ is diffeomorphic to $S^4$ if and only if the pair $(\mathbb{C}P^2, K\#\mathbb{C}P^1)$ is diffeomorphic to $(\mathbb{C}P^2,\mathbb{C}P^1)$; see \cite{melvin1977blowing}. We note that $(\mathbb{C}P^2,K\#\mathbb{C}P^1)\cong (\mathbb{C}P^2,\mathbb{C}P^1)$ implies $(4)$ in \autoref{thm: Gluck twist and equivalent statements} because $(S^2\tilde{\times}S^2,F)\cong(\mathbb{C}P^2\#\overline{\mathbb{C}P^2},\mathbb{C}P^1\#\overline{\mathbb{C}P^1})$. However, the converse is not immediately obvious. Although condition $(4)$ is seemingly weaker, it is still sufficient to trivialize the Gluck twist.

\subsection*{Organization}In \autoref{sec: preliminaries}, we review basic handle decomposition theory for manifolds of arbitrary dimension, Kirby diagrams for $4$-manifolds, (singular) banded unlink diagrams for surfaces in $4$-manifolds, and $1$- and $2$-surgery on $4$-manifolds. In \autoref{sec:Heegaard diagrams for 5-manifolds}, we review $5$-dimensional Heegaard diagrams and provide numerous examples. In \autoref{sec:Gluck twists}, we discuss the Gluck twist and construct an interesting 
cobordism from a $4$-manifold to its Gluck twist along a $2$-knot.

\subsection*{Acknowledgements}
The author would like to thank David Gay for valuable discussions and many helpful feedback. The author also thanks Seungwon Kim, Maggie Miller, and Patrick Naylor for helpful discussions. This project was partially supported by National Science Foundation grant DMS-2005554 ``Smooth $4$--Manifolds: $2$--, $3$--, $5$-- and $6$--Dimensional Perspectives''.
 
\section{Preliminaries}\label{sec: preliminaries}
In \autoref{sec: handle decomposition}, we first review some background on handle decomposition theory and certain moves on handle decompositions of a manifold; see \cite{milnor1963morse,milnor2015lectures,kosinski2013differential} for more details. In \autoref{sec: Kirby diagrams}, we discuss handle decompositions of $4$-manifolds via Kirby diagrams; for further details, refer to \cite{kirby2006topology,kirby1978calculus,gompf20234,akbulut20164}. In \autoref{sec: banded unlink diagrams}, we review decompositions of pairs $(X,F)$, where $F$ is an embedded or immersed surface in a $4$-manifold $X$, via (singular) banded unlink diagrams; see \cite{hughes2020isotopies,hughes2021band}. Finally, in \autoref{sec: Kirby diagrams for surgeries}, we describe an algorithm for finding a Kirby diagram of surgery on a $4$-manifold along embedded $1$-spheres or $2$-spheres from a banded unlink diagram.

\subsection{Handle decompositions}\label{sec: handle decomposition}

Let $X$ be an $n$-manifold and $\phi:S^{k-1}\times B^{n-k}\hookrightarrow\partial X$ be an embedding. The \textit{manifold obtained from $X$ by attaching an $n$-dimensional $k$-handle $h^k=B^k\times B^{n-k}$ along $\phi$} is the quotient manifold \[X\cup_\phi h^k=(X\coprod (B^k\times B^{n-k}))/ \sim,\] where $x\sim\phi(x)$ for all $x\in S^{k-1}\times B^{n-k}$. The map $\phi$ is called the \textit{attaching map} of $h^k$.

\begin{definition}\label{def: handle decomposition}
Let $X$ be a compact $n$-manifold with $\partial X=\partial_{-}X\coprod \partial_{+}X.$ A \textit{handle decomposition} of $X$ (relative to $\partial_{-}X)$ is a sequence of manifolds \[X_{-1}\subseteq X_0 \subseteq \dots \subseteq X_{n}=X\] such that
    \begin{enumerate}
        \item $X_{-1}= \partial_{-}X\times [0,1]$,
        \item $\partial_{-} X_{k}=\partial_{-}X$, 
        \item $X_k$ is obtained from $X_{k-1}$ by attaching $k$-handles to $\partial_{+}X_{k-1}$.
    \end{enumerate}
More precisely, \[X_k=X_{k-1}\cup_{\phi_1}(B^k\times B^{n-k})\cup_{\phi_2}\dots\cup_{\phi_t}(B^k\times B^{n-k})\] for some embeddings $\phi_i:S^{k-1}\times B^{n-k}\hookrightarrow \partial_{+}X_{k-1}$ such that $\phi_i(S^{k-1}\times B^{n-k})\cap \phi_j(S^{k-1}\times B^{n-k})=\emptyset$ for all $i\neq j \in \{1,\dots, t\}.$
\end{definition}

\begin{proposition}[\cite{milnor1963morse,milnor2015lectures}]\label{pro: existence of a handle decomposition}
Every compact, smooth $n$-manifold $X$ admits a handle decomposition (relative to $\partial_{-} X$).
\end{proposition}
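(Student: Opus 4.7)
The plan is to derive the handle decomposition from Morse theory. First, I would produce a Morse function $f\colon X\to[0,1]$ with $f^{-1}(0)=\partial_-X$, $f^{-1}(1)=\partial_+X$, no critical points on the boundary, and all critical values distinct and lying in the open interval $(0,1)$; such an $f$ exists by a standard collar-and-perturbation argument applied to any smooth function that separates the two boundary components. I would then fix a gradient-like vector field $\xi$ for $f$ that is inward-pointing along $\partial_-X$ and outward-pointing along $\partial_+X$.

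Next, the Morse Lemma together with the flow of $\xi$ describes the change in topology as $t$ crosses a critical value. Concretely, if $p$ is an isolated critical point of index $k$ with $f(p)=c$, then for small $\epsilon>0$ the sublevel set $f^{-1}([0,c+\epsilon])$ is diffeomorphic to $f^{-1}([0,c-\epsilon])$ with an $n$-dimensional $k$-handle attached via an embedding $\phi\colon S^{k-1}\times B^{n-k}\hookrightarrow \partial_+ f^{-1}([0,c-\epsilon])$ built from the descending sphere of $p$ together with a normal framing coming from $\xi$. Iterating across all critical values presents $X$ as $\partial_-X\times[0,1]$ with handles attached one at a time, in the order dictated by the critical values of $f$.

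The remaining step is to reorder the handles so that all $k$-handles appear before any $(k+1)$-handles, as required by \autoref{def: handle decomposition}. I would invoke the classical Rearrangement Theorem: if $p,q$ are critical points with indices $\lambda_p\geq\lambda_q$ and no $\xi$-trajectory runs from $p$ to $q$, then $f$ can be modified so that the order of $f(p)$ and $f(q)$ is swapped, leaving the resulting handle structure unchanged away from a neighborhood of the affected trajectories. A transversality argument on the ascending and descending manifolds shows that after a generic perturbation of $\xi$ there is no trajectory from a critical point of lower index to one of strictly higher index, so the theorem applies and, by induction, produces a sequence $X_{-1}\subseteq X_0\subseteq\cdots\subseteq X_n=X$ satisfying the three conditions of \autoref{def: handle decomposition}. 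The main technical obstacle is the simultaneous control required in this rearrangement, namely separating critical values by index while preserving the collar structure near $\partial_\pm X$ and keeping handles within a single index level pairwise disjoint; since this technical heart is already carried out in detail in \cite{milnor1963morse,milnor2015lectures}, the natural proof strategy is to present the Morse-theoretic outline above and appeal to those references for the delicate perturbation arguments.
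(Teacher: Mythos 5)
Your proposal is correct and takes essentially the same Morse-theoretic route as the paper: the paper simply cites the existence of a self-indexing Morse function $f$ with $f(\partial_{\pm}X)$ at the endpoints of the interval and reads off the handle decomposition from the sublevel sets $X_k = f^{-1}([-1-\tfrac12, k+\tfrac12])$, whereas you unfold that citation into its ingredients (existence of a Morse function with distinct critical values, handle attachment at each critical value via a gradient-like vector field, and the Rearrangement Theorem to order handles by index). The content is the same; you have just expanded the black box that the paper delegates to \cite{milnor1963morse,milnor2015lectures}.
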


\begin{proof}
By Morse theory, there exists a self-indexing Morse function $f:X\rightarrow [-1-\frac{1}{2},n+\frac{1}{2}]$ such that $f^{-1}(-1-\frac{1}{2})=\partial_{-}X$, $f^{-1}(n+\frac{1}{2})=\partial_{+}X$, and $f(x)=\operatorname{ind}(x)$ for every critical point $x$, where $\operatorname{ind}(x)$ denotes the index of $x$. Then the sublevel sets $X_k=f^{-1}([-1-\frac{1}{2},k+\frac{1}{2}])$ give a handle decomposition $X_{-1}\subseteq X_0 \subseteq \dots \subseteq X_{n}=X$.
\end{proof}

\begin{remark}\label{rem: basics for handle decompositions} \hfill
    \begin{enumerate}
        \item $X_{0}$ is a disjoint union of $X_{-1}$ and $0$-handles.
        \item If $\partial_{-}X=\emptyset,$ then $X_{-1}=\emptyset.$
        \item If there are no $k$-handles attached to $\partial_{+} X_{k-1}$, then $X_{k-1}=X_k.$
        \item If $X$ is a compact, connected manifold with $\partial_{-}X\neq \emptyset$ and $\partial_{+}X\neq \emptyset$, then $X$ admits a handle decomposition without $0$-handles and $n$-handles. That is, $X_{-1}\subseteq X_0\subseteq X_1 \subseteq \dots \subseteq X_n=X,$ where $X_{-1}=X_0$ and $X_{n-1}=X_n$.
        \item If $X$ is a compact, connected manifold with $\partial_{-}X=\emptyset$, then $X$ admits a handle decomposition $X_0\subseteq X_1 \subseteq \dots \subseteq X_n=X,$ where $X_0$ is a single $0$-handle. Here, $X_k$ is called an \textit{$n$-dimensional $k$-handlebody}.
        \item If $X$ is a closed manifold (i.e., compact with $\partial X=\emptyset$), then $X$ admits a handle decomposition $X_0\subseteq X_1\subseteq \dots \subseteq X_n,$ where $X_0$ is a single $0$-handle and $X_{n}$ is obtained from $X_{n-1}$ by attaching a single $n$-handle.
        \item The boundary $\partial_{+} X_k$ is obtained from $\partial_{+} X_{k-1}$ by $(k-1)$-surgery. Similarly, $\partial_{+} X_{k-1}$ is obtained from $\partial_{+} X_k$ by $(n-k-1)$-surgery.
        \item Let $f:X\rightarrow [-1-\frac{1}{2}, n+\frac{1}{2}]$ be the Morse function in the proof of \autoref{pro: existence of a handle decomposition}. Define a function $g:X\rightarrow [-1-\frac{1}{2}, n+\frac{1}{2}]$ defined by $g(x)=n-1-f(x)$. Then $M_{-1}\subseteq M_0 \subseteq \dots \subseteq M_{n}=X$ is a handle decomposition of $X$ (relative to $\partial_{+}X$), where $M_k=g^{-1}([-1-\frac{1}{2},k+\frac{1}{2}])$, and it is called the \textit{dual handle decomposition} of the handle decomposition $X_{-1}\subseteq X_0 \subseteq \dots \subseteq X_{n}=X$.
        \item The homology of the pair $(X,\partial_{-} X)$ can be computed from a handle decomposition. Let $C_k(X,\partial X)$ be the free abelian group generated by the oriented $k$-handles. Define the boundary map $\partial_k:C_k(X,\partial X)\rightarrow C_{k-1}(X,\partial X)$ by $\partial_k(h^k)=(-1)^{k-1}\sum_i (A^k\cdot B^{k-1}_i)h^{k-1}_i$, where $h^{k-1}_i$ is the indexed $(k-1)$-handle, and $A^k\cdot B^{k-1}_i$ is the algebraic intersection number between the attaching sphere $A^k$ of $h^k$ and the belt sphere $B^{k-1}_i$ of $h^{k-1}_i$. See \cite{durst2019handle} for more details.
    \end{enumerate}
\end{remark}

\begin{definition}\label{def: handle slides}
Let $X\cup_\phi h^k_\alpha \cup_\psi h^k_\beta$ be an $n$-manifold obtained from $X$ by attaching two $k$-handles  along $\phi$ and $\psi$. Let $X\cup_\phi h^k_\alpha \cup_{\psi'} h^k_{\beta'}$ be another $n$-manifold, where the second handle is attached along a different embedding $\psi'$. We say that $X\cup_\phi h^k_\alpha \cup_{\psi'} h^k_{\beta'}$ is obtained from $X\cup_\phi h^k_\alpha \cup_\psi h^k_\beta$ by a \textit{handle slide} of $h^k_\beta$ over $h^k_\alpha$ if there exists an embedding $F:(S^{k-1}\times B^{n-k})\times [0,1]\rightarrow \partial(X\cup_\phi h^k_\alpha)\times [0,1]$ such that 
   \begin{enumerate}
       \item $F(x,t)\subset \partial (X\cup_\phi h^k_\alpha)\times\{t\}$ for every $x\in S^{k-1}\times B^{n-k}$ and $t\in [0,1]$,
       \item $F(x,0)=(\psi(x),0)$ for every $x\in S^{k-1}\times B^{n-k}$,
       \item $F(x,1)=(\psi'(x),1)$ for every $x\in S^{k-1}\times B^{n-k}$,
       \item $F((S^{k-1}\times \{0\})\times [0,1])$ and $B^k_\alpha\times [0,1]$ intersect transversely at one point, where $B^k_\alpha$ is the belt sphere of $h^k_\alpha$. 
   \end{enumerate}
\end{definition}

Since a handle slide is an isotopy of an attaching map, we have the following:

\begin{proposition}
In \autoref{def: handle slides}, the two manifolds $X\cup_\phi h^k_\alpha \cup_\psi h^k_\beta$ and $ X\cup_\phi h^k_\alpha \cup_{\psi'} h^k_{\beta'}$ are diffeomorphic. That is, handle slides do not change the diffeomorphism type.
\end{proposition}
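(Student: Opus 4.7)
The plan is to leverage the remark preceding the proposition: a handle slide presents the new attaching map $\psi'$ as the endpoint of a smooth isotopy from $\psi$, and standard isotopy-extension theory will then upgrade this to an ambient self-diffeomorphism of $\partial(X\cup_\phi h^k_\alpha)$ that carries $\psi$ to $\psi'$. From that ambient diffeomorphism I can build the desired diffeomorphism between $X\cup_\phi h^k_\alpha \cup_\psi h^k_\beta$ and $X\cup_\phi h^k_\alpha \cup_{\psi'} h^k_{\beta'}$ by extending over a collar and gluing with the identity on the second handle.

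More precisely, I will first unpack the map $F$. Condition (1) says that $F$ preserves the $[0,1]$-factor, so $F$ has the form $F(x,t)=(\psi_t(x),t)$ for a smooth one-parameter family of maps $\psi_t\colon S^{k-1}\times B^{n-k}\to \partial(X\cup_\phi h^k_\alpha)$. Since $F$ itself is an embedding, each $\psi_t$ is an embedding, so $\{\psi_t\}_{t\in [0,1]}$ is a smooth isotopy of embeddings with $\psi_0=\psi$ and $\psi_1=\psi'$ by conditions (2) and (3) (condition (4) is the geometric content of sliding once over $h^k_\alpha$, but is not needed for the diffeomorphism statement). I will then invoke the isotopy extension theorem (for compact domains, applied to the isotopy of the compact submanifold $\psi_t(S^{k-1}\times B^{n-k})$ inside the manifold $\partial(X\cup_\phi h^k_\alpha)$) to obtain an ambient isotopy $\Phi_t$ of $\partial(X\cup_\phi h^k_\alpha)$ with $\Phi_0=\mathrm{id}$ and $\Phi_1\circ\psi=\psi'$.

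Finally, I will promote $\Phi_1$ to a self-diffeomorphism $\widetilde\Phi$ of $X\cup_\phi h^k_\alpha$ as follows. Choose a collar neighborhood $c\colon \partial(X\cup_\phi h^k_\alpha)\times [0,1)\hookrightarrow X\cup_\phi h^k_\alpha$, define $\widetilde\Phi$ on the collar by $c(y,s)\mapsto c(\Phi_{\rho(s)}(y),s)$ for a smooth cut-off $\rho\colon [0,1)\to[0,1]$ with $\rho(0)=1$ and $\rho$ equal to $0$ near $1$, and set $\widetilde\Phi$ to be the identity off the collar. Then $\widetilde\Phi$ restricts to $\Phi_1$ on the boundary and to the identity away from the collar. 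The map
\[
\widetilde\Phi \cup \mathrm{id}_{h^k_\beta}\colon (X\cup_\phi h^k_\alpha)\cup_{\psi}h^k_\beta \longrightarrow (X\cup_\phi h^k_\alpha)\cup_{\psi'}h^k_{\beta'}
\]
is then well-defined by the identity $\Phi_1\circ\psi=\psi'$, and is a diffeomorphism because it is smooth on each piece and matches on the overlap.

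The only genuine technical step is the collar construction, where one must check that gluing $\widetilde\Phi$ to $\mathrm{id}_{h^k_\beta}$ across $\psi$ is compatible with the quotient identifications defining $X\cup_\phi h^k_\alpha\cup_\psi h^k_\beta$; this is a standard verification once the collar parameter and cut-off are chosen, and I do not anticipate it as a serious obstacle. The substantive input is simply the isotopy extension theorem applied to the compact attaching region.
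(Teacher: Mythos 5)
Your proposal is correct and takes essentially the same approach as the paper. The paper simply notes that a handle slide is an isotopy of the attaching map and records the proposition without a written proof; your argument supplies the standard details of that observation (isotopy extension on the compact attaching region, cut-off over a collar, and gluing with the identity on the handle), all of which are implicit in the paper's appeal to the well-known fact that isotopic attaching maps yield diffeomorphic handle attachments.
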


\begin{definition}
Let $N_1\cup N_2\subset M$ be an $n$-submanifold of an $m$-manifold $M$, where $m>n$. An $(n+1)$-submanifold $b\subset M$ is called an \textit{$(n+1)$-dimensional $1$-handle connecting $N_1$ and $N_2$} if there exists an embedding $e:B^1\times B^n\hookrightarrow M$ such that 
    \begin{enumerate}
        \item $b=e(B^1\times B^n)$,
        \item $b\cap N_1=e(\{-1\}\times B^n)$,
        \item $b\cap N_2=e(\{1\}\times B^n)$.
    \end{enumerate} We call \[N_1\#_bN_2=\left((N_1\cup N_2)\setminus e(\partial B^1\times B^1)\right)\cup e(B^1\times \partial B^n)\] the \textit{manifold obtained from $N_1\cup N_2$ by surgery along $b$} or \textit{connected sum of $N_1$ and $N_2$ along $b$}.
\end{definition}

\begin{remark}
In \autoref{def: handle slides}, the attaching sphere $A^k_{\beta'}$ of $h^k_{\beta'}$ is obtained by taking the connected sum of the push-off $\Tilde{A^k_\alpha}$ (with respect to a given framing) of the attaching sphere $A^k_\alpha$ and the attaching sphere $A^k_\beta$ along a $k$-dimensional $1$-handle $b\subset \partial X$ connecting them. That is, $A^k_{\beta'}=\tilde{A^k_\alpha}\#_bA^k_\beta$.
\end{remark}

\begin{definition}\label{def: cancelling pair}
Let $X\cup_{\phi} h^{k-1}\cup_{\psi} h^k$ be an $n$-manifold obtained from $X$ by attaching a $(k-1)$-handle and a $k$-handle. If the attaching sphere of $h^k$ intersects the belt sphere of $h^{k-1}$ intersect transversely at one point in $\partial (X\cup_\phi h^{k-1})$, then the pair $(h^{k-1},h^k)$ is called a \textit{cancelling $(k-1,k)$-pair}. We say that $X\cup_{\phi} h^{k-1}\cup_{\psi} h^k$ is obtained from $X$ by \textit{creation} of a cancelling $(k-1,k)$-pair. Conversely, we say that $X$ is obtained from $X\cup_{\phi} h^{k-1}\cup_{\psi} h^k$ by \textit{annihilation} of a canceling $(k-1,k)$-pair.
\end{definition}

\begin{proposition}[\cite{milnor2015lectures}]
In \autoref{def: cancelling pair}, $X\cup_{\phi} h^{k-1}\cup_{\psi} h^k$ and $X$ are diffeomorphic. That is, a cancelling pair may be added or removed without changing the diffeomorphism type. 
\end{proposition}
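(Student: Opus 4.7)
The plan is to build a diffeomorphism $X\cup_\phi h^{k-1}\cup_\psi h^k\cong X$ that is the identity outside a neighborhood of $h^{k-1}\cup h^k$. The whole argument is local, centered at the single transverse intersection point $p$ of the attaching sphere $A^k$ of $h^k$ with the belt sphere $B^{k-1}$ of $h^{k-1}$ in $\partial(X\cup_\phi h^{k-1})$.

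First I would standardize the attaching map $\psi$. Using the tubular neighborhood theorem for $B^{k-1}\subset\partial(X\cup_\phi h^{k-1})$ together with the isotopy extension theorem, I would isotope $\psi$ through embeddings to a new attaching map $\psi'$ that agrees with $\psi$ outside a small neighborhood of $\psi^{-1}(p)$ and, near that point, embeds a disk of the attaching region as a standard fiber $\{q\}\times B^{n-k+1}_\varepsilon$ of the product structure $h^{k-1}=B^{k-1}\times B^{n-k+1}$. Since isotopic attaching maps produce diffeomorphic handle attachments, this reduces the problem to the case in which $\psi$ already has this standard local form.

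Next I would verify the cancellation in the local model. Equip $X\cup_\phi h^{k-1}\cup_{\psi'} h^k$ with a self-indexing Morse function $f$ whose only critical points outside $X$ are the centers $x_{k-1}$ of $h^{k-1}$ and $x_k$ of $h^k$. A gradient-like vector field $V$ for $f$ has a unique trajectory running from $x_k$ down to $x_{k-1}$, corresponding to the single intersection $A^k\cap B^{k-1}=\{p\}$. Milnor's first cancellation theorem in \cite{milnor2015lectures} then produces a compactly supported modification $V'$ of $V$ with no zeros that is still everywhere transverse to the level sets of an adjusted Morse function; integrating $V'$ identifies the local model, rel its attaching region on $\partial X$, with a collar, and extending by the identity outside the local model yields the required diffeomorphism.

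The main obstacle is the perturbation step that eliminates the two critical points, which requires showing that the unique trajectory between them can be "un-kinked" into a regular orbit. This is precisely what the single transverse intersection hypothesis makes possible, and the construction is carried out in detail in \cite{milnor2015lectures}; once the attaching map has been reduced to the standard local form, the cancellation follows directly by invoking that theorem.
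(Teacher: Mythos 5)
The paper does not actually prove this proposition; it cites Milnor's \emph{Lectures on the h-cobordism theorem}, and your sketch is essentially a retelling of Milnor's First Cancellation Theorem (modify a gradient-like vector field along the unique trajectory between the two critical points so that it has no zeros, integrate to obtain a product structure on the local cobordism, and extend by the identity off a compact set). So the broad strategy matches the intended source.

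There is, however, a concrete error in the local normal form you claim to arrange for $\psi$. You say you isotope $\psi$ so that near $\psi^{-1}(p)$ a disk of the attaching region maps onto a ``standard fiber $\{q\}\times B^{n-k+1}_\varepsilon$'' of $h^{k-1}=B^{k-1}\times B^{n-k+1}$. That set is a parallel copy of the cocore of $h^{k-1}$: it has dimension $n-k+1$ and lies in the \emph{interior} of $h^{k-1}$, whereas $\psi$ lands in the $(n-1)$-dimensional boundary $\partial(X\cup_\phi h^{k-1})$ and the attaching sphere $A^k$ has dimension $k-1$. The object you should be aiming for is a meridian disk $B^{k-1}\times\{q\}$ of the belt region $B^{k-1}\times S^{n-k}\subset\partial(X\cup_\phi h^{k-1})$, which has the right dimension $k-1$ and does sit in the boundary; transversality of $A^k\cap B^{k-1}$ at $p$ is exactly what lets you locally straighten $A^k$ onto such a disk via a tubular-neighborhood argument. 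Beyond that slip, note that Milnor's cancellation theorem hypothesizes only a single transversal intersection, not any preliminary normal form for the attaching map, so once you invoke it your Step 1 becomes logically redundant as written: the straightening you gesture at is internal to Milnor's proof (he standardizes the gradient-like vector field near the unique trajectory), not a separate reduction you need to carry out first.
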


\begin{theorem}[\cite{cerf1970stratification}]\label{thm: Cerf's moves} Any two handle decompositions of a compact smooth manifold $(X, \partial_{-}X)$ are related by isotopies, handle slides, and the creation/annihilation of cancelling pairs. 
\end{theorem}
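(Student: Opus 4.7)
The plan is to invoke classical Cerf theory. By the Morse-theoretic argument of \autoref{pro: existence of a handle decomposition}, every handle decomposition of $(X,\partial_-X)$ arises from a self-indexing Morse function $f:X\to[-\tfrac{3}{2},n+\tfrac{1}{2}]$ satisfying $f^{-1}(-\tfrac{3}{2})=\partial_-X$ and $f^{-1}(n+\tfrac{1}{2})=\partial_+X$. Given two handle decompositions, I would choose two such Morse functions $f_0$ and $f_1$. The space of smooth functions on $X$ with the prescribed boundary behavior is contractible, so there exists a smooth path $\{f_t\}_{t\in[0,1]}$ connecting them; the entire content of the theorem is that such a path can be taken to be generic in a combinatorially controlled way.

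Next, I would apply Cerf's stratification theorem to perturb $\{f_t\}$ so that $f_t$ is Morse with distinct critical values for all but finitely many values of $t$, at which exactly one codimension-one degeneracy appears: either a birth/death singularity (a cubic degeneracy locally modeled by $\pm x_0^3 \pm \sum_{i\geq 1}\pm x_i^2 + tx_0$), or a crossing, where two Morse critical points take the same critical value. Between singular times the critical points and their stable/unstable manifolds vary smoothly with $t$, so the isotopy extension theorem produces an ambient isotopy relating the induced handle decompositions. At a birth one gains a new pair of critical points of consecutive indices $k-1$ and $k$ whose attaching sphere and belt sphere meet transversely in a single point, which is exactly the creation of a cancelling $(k-1,k)$-pair in the sense of \autoref{def: cancelling pair}; a death reverses this. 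A crossing between critical points of equal index is the classical handle-exchange picture and corresponds to a handle slide as in \autoref{def: handle slides}, while a crossing between critical points of differing indices can be resolved without altering the combinatorics. Finally, self-indexing is restored by a target reparametrization at the end.

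The main obstacle is the genericity argument: one must show that an arbitrary path in the appropriate function space can be perturbed to meet the discriminant only in the two codimension-one strata above, while keeping the boundary behavior fixed at every $t$. This is the heart of Cerf's 1970 theorem and rests on a delicate transversality argument together with the classification of local models for singularities of codimension up to one. A secondary subtlety is verifying that each equal-index crossing genuinely realizes a handle slide in the sense of \autoref{def: handle slides}, which amounts to identifying the framing transported along the relevant gradient flow line connecting the two critical points. Once these ingredients are in place, concatenating the moves recorded along the generic path $\{f_t\}$ yields the desired sequence of isotopies, handle slides, and creations/annihilations of cancelling pairs relating the two given handle decompositions.
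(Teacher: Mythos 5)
The paper does not prove this statement; it is cited directly as Cerf's theorem from \cite{cerf1970stratification}, and the surrounding text only remarks that the moves will later be interpreted in the Kirby- and Heegaard-diagram settings. Your sketch is a faithful outline of the standard Cerf-theoretic proof and correctly isolates the genuine technical content, namely Cerf's genericity/stratification theorem for one-parameter families of functions. So this is not a case of taking a different route from the paper; it is supplying the route the paper deliberately omits by citation.

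Two small refinements are worth noting. First, not every crossing of two same-index critical values realizes a handle slide: if during the crossing there is no gradient trajectory connecting the two critical points, the crossing merely reorders the handles, which is an isotopy of the handle decomposition rather than a slide. A slide occurs precisely when the descending sphere of one critical point sweeps across the ascending sphere of the other as the values exchange. Second, for crossings between critical points of differing indices one invokes the rearrangement (independent trajectories) lemma to push the higher-index critical value above the lower one without changing the induced handle decomposition; your phrase ``can be resolved without altering the combinatorics'' is accurate but worth tying to that lemma. With those clarifications, the outline matches the classical argument that the cited reference makes rigorous.
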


Later, we carefully interpret isotopies, handle slides, and the creation/annihilation of a cancelling pair in \autoref{thm: Cerf's moves} in the setting of Kirby diagrams for $4$-manifolds in \autoref{sec: Kirby diagrams} and Heegaard diagrams for $5$-manifolds in \autoref{sec:Heegaard diagrams for 5-manifolds}.

\subsection{Kirby diagrams for 4-manifolds}\label{sec: Kirby diagrams}

\begin{definition}
Let $K\subset S^3$ be a knot ($1$-knot). An \textit{$m$-framing} of $K$ is an embedding $\phi:S^1\times B^2\hookrightarrow S^3$ such that 
    \begin{enumerate}
        \item $\phi(S^1\times\{(0,0)\})=K$,
        \item $\phi(S^1\times B^2)=\nu(K)$,
        \item $\operatorname{lk}(K,\phi(S^1\times\{(1,0)\}))=m\in\mathbb{Z}\cong\pi_1(SO(2))$.
    \end{enumerate} Here, $\nu(K)$ is a closed
regular neighborhood of $K$, and $\operatorname{lk}(K,\phi(S^1\times\{(1,0)\}))$ is the linking number between $K$ and the \textit{push-off} $\tilde{K}=\phi(S^1\times \{(1,0)\})$. 
We call the pair $(K,\phi)$ an \textit{$m$-framed knot}, and simply draw the knot $K$ with the integer $m.$  A \textit{framed link} is a link $L\subset S^3$ in which each component is a framed knot.
\end{definition}

\begin{definition}
A link $L\subset S^3$ is called the \textit{unlink} or \textit{trivial link} if $L=\phi(\coprod S^1)$ for some embedding $\phi:\coprod B^2\hookrightarrow S^3$. Here, $D_L=\phi(\coprod B^2)$ is called \textit{the collection of the trivial disks} of $L$, so that $\partial D_{L}=L$. The $1$-component unlink is called the \textit{unknot}. The unknot $K\subset S^3$ with a dot is called a \textit{dotted unknot}. A \textit{push-off} of $K$ is a dotted longitude $\tilde{K}\subset\partial \nu(K)$ such that $\operatorname{lk}(K,\tilde{K})=0$. An unlink $L=K_1\cup\dots\cup K_n\subset S^3$ is called a \textit{dotted unlink} if each $K_i$ is a dotted unknot.
\end{definition}

\begin{definition}
    Let $L\subset S^3=\partial B^4$ be a dotted unlink and $D_L\subset S^3$ be its collection of the trivial disks with $\partial D_L=L.$ Let $D_L'\subset B^4$ be the collection of the properly embedded trivial disks obtained by pushing the interior of $D_L$ into the interior of $B^4.$ Define the $4$-manifold\[M_{L}=\overline{B^4\setminus\nu(D_L')}\] to be the closure of the complement of the closed regular neighborhood $\nu(D_L')$ in $B^4$.
\end{definition}

\begin{remark}\hfill
    \begin{enumerate}
        \item $M_{L}\cong \natural^{|L|}(S^1\times B^3),$ where $|L|$ is the number of components of $L$ and $\natural^{|L|}(S^1\times B^3)$ is the boundary connected sum of $|L|$ copies of $(S^1\times B^3)$. Hence, $M_{L}$ can be considered as a manifold obtained from $B^4$ by attaching $|L|$ $1$-handles. The collection of the trivial disks $D_L\setminus\operatorname{int}(\nu(L))\subset S^3$ can be viewed as the hemispheres of the belt spheres of these $1$-handles.
        \item $\partial M_{L}\cong \#^{|L|}(S^1\times S^2),$ where $\#^{|L|}(S^1\times S^2)$ is the connected sum of $|L|$ copies of $S^1\times S^2$.
    \end{enumerate}
\end{remark}

\begin{definition}
A \textit{Kirby diagram} $\mathcal{K}=L_1\cup L_2\subset S^3$ is a link in $S^3$, where $L_1$ is the dotted unlink and $L_2=(K_1,\phi_1)\cup\dots\cup (K_n,\phi_n)$ is a framed link with each $\phi_i$ a framing of $K_i$. We define the $4$-manifold\[M_\mathcal{K}=M_{L_1}\cup_{\phi_1}(B^2\times B^2)\cup_{\phi_2}\dots\cup_{\phi_n}(B^2\times B^2)\] to be the result of attaching $2$-handles along $L_2$ (or $\phi_i's).$
\end{definition}

\begin{remark}\hfill
    \begin{enumerate}
        \item Since $L_2\subset S^3\setminus L_1$, it is embedded in $\partial M_{L_1}$.
        \item $M_\mathcal{K}$ is obtained from $B^4$ by carving out the collection of the properly embedded trivial disks $D_{L_1}'\subset B^4$ with $\partial D_{L_1}'=L_1\subset S^3$ and attaching $2$-handles along $L_2$.
        \item We can easily see how the $2$-handles interact with the $1$-handles by observing the intersections of $L_2$ with the collection of the trivial disks $\tilde{D_{L_1}}=D_{L_1}\setminus \operatorname{int}(\nu(L_1))$, which represent the belt spheres of the $1$-handles, where $D_{L_1}$ is the trivial disks of $L_1$. See the left of \autoref{fig: Mazur manifold}.
        \item Let $U$ be a dotted unknot and $U'$ be a $0$-framed unknot. Then $M_{U}\cong S^1\times B^3$ and $S^2\times B^2\cong M_{U'}$ are not diffeomorphic, though $\partial M_{U}\cong S^1\times S^2\cong \partial M_{U'}.$ The manifold $M_U$ is obtained from $M_{U'}$ by surgery along $S^2\times \{0\}\subset S^2\times B^2$, and conversely $M_{U'}$ is obtained from $M_{U}$ by surgery along $S^1\times \{0\}\subset S^1\times B^3$.
        \item Let $\tilde{\mathcal{K}}$ be the Kirby diagram obtained from $\mathcal{K}$ by replacing the dotted unlink with a $0$-framed unlink. Then $\partial M_{\mathcal{K}}\cong \partial M_{\tilde{\mathcal{K}}}.$
    \end{enumerate}
\end{remark}

\begin{definition}\label{def: Kirby diagram for closed manifold}
Let $\mathcal{K}=L_1\cup L_2\subset S^3$ be a Kirby diagram with $\partial M_{\mathcal{K}}\cong \#^k (S^1\times S^2)$ for some $k\geq 0.$ Let $f:\#^k (S^1\times S^2)\rightarrow \partial M_{\mathcal{K}}$ be a diffeomorphism. We define  \[\widehat{M_{\mathcal{K}}}=M_{\mathcal{K}}\cup_{f}(\natural^k(S^1\times B^3))\] to be the closed $4$-manifold obtained from $M_{\mathcal{K}}$ by gluing $\natural^k(S^1\times B^3)$ along $f$.
\end{definition}

\begin{remark}\hfill
    \begin{enumerate}
        \item $\natural^k(S^1\times B^3)$ can be described either as the union of a $0$-handle and $k$ $1$-handles or as the union of $k$ $3$-handles and a $4$-handle.
        \item $\widehat{M_{\mathcal{K}}}$ is uniquely determined up to diffeomorphism because every self-diffeomorphism of $\#^k(S^1\times S^2)$ extends to a self-diffeomorphism of $\natural^k (S^1\times B^3)$ \cite{laudenbach1972note}. Thus, in the diagram $\mathcal{K}$, it suffices to depict $\widehat{M_\mathcal{K}}$ without explicitly including the $k$ $3$-handles and the $4$-handle since their attachment is uniquely determined.
        \item  A straightforward way to attach the $k$ $3$-handles and the $4$-handle to $M_\mathcal{K}$ is as follows. First, attach $k$ $3$-handles to $M_{\mathcal{K}}$ along the $2$-spheres $\coprod^k(\{x_0\}\times S^2)\subset \#^k(S^1\times S^2)\cong\partial M_{\mathcal{K}}$, where $\{x_0\}\in S^1$. Then attach the $4$-handle along $S^3$ that results from performing surgery on $\partial M_{\mathcal{K}}$ along the attaching spheres of the $3$-handles.
    \end{enumerate}    
\end{remark}

\begin{figure}[ht!]
\labellist
\small\hair 2pt
\pinlabel{$0$} at 15 200
\pinlabel{$0$} at 197 200
\pinlabel{$0$} at 350 150
\pinlabel{$0$} at 379 200
\pinlabel{$0$} at 532 150
\pinlabel{$K$} at 15 50
\endlabellist
\centering
\includegraphics[width=0.8\textwidth]{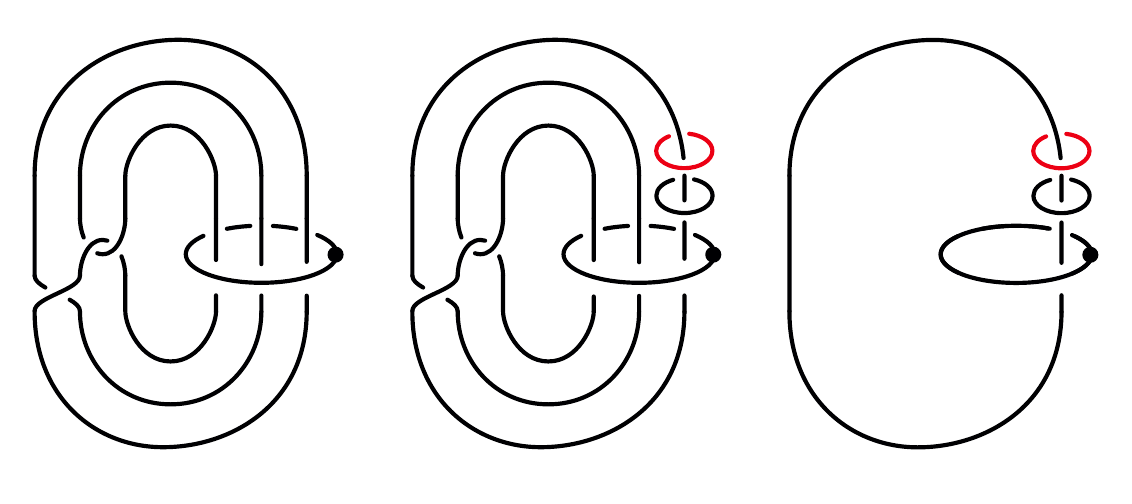}
\caption{\textbf{Left}: A Kirby diagram of Mazur's contractible manifold $M$, which admits a handle decomposition consisting of a $0$-handle, a $1$-handle, and a $2$-handle. The $0$-framed attaching sphere $K$ of the $2$-handle intersects the belt sphere of the $1$-handle geometrically three times and algebraically once. \textbf{Middle}: A Heegaard diagram of $M\times B^1$, obtained from a Kirby diagram of the double of the left by adding a red meridian. \textbf{Right}: Another Heegaard diagram of $M\times B^1$, obtained from the middle diagram after sliding $K$ over the $0$-framed meridian to change the crossings of $K$. This diagram represents $B^5$ after cancelling a $(1,2)$-pair and a $(2,3)$-pair, followed by a first destabilization.}\label{fig: Mazur manifold}
\end{figure}

We recall that if a Heegaard diagram $(\Sigma,\alpha,\beta)$ represents a $5$-dimensional $3$-handlebody or closed $5$-manifold, then $\Sigma$ is diffeomorphic to the double $DY$ of some $4$-dimensional $2$-handlebody $Y$; see \autoref{thm: Heegaard diagrams for 5d 2handlebodies}. We therefore review an algorithm for constructing a Kirby diagram of the double $DY$.

\begin{proposition}[A Kirby diagram of the double $DY$ of a $4$-dimensional $2$-handlebody $Y$]\label{pro: how to draw a Kirby diagram for the double of a 4-dimensional 2-handlebody}
Let $Y$ be a $4$-dimensional $2$-handlebody, and let $\mathcal{K}=L_1\cup L_2\subset S^3$ be a Kirby diagram of $Y$, where $L_1$ is a dotted unlink and $L_2$ is a framed link. The double of $Y$ is $DY=Y\cup_{id} \overline{Y}$, and $\overline{Y}$ has the canonical handle decomposition consisting of $2$-handles, $3$-handles, and a $4$-handle, obtained by turning the handle decomposition of $Y$ upside down. The attaching spheres of the $2$-handles of $\overline{Y}$ are glued to the belt sphere of the $2$-handles of $Y$. Therefore, we can obtain a Kirby diagram $\mathcal{K}'=\mathcal{K}\cup J$ of $DY$, where $J$ is a union of the $0$-framed meridians of $L_2$, i.e., $\widehat{M_\mathcal{K'}}\cong DY$.
\end{proposition}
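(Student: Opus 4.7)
The plan is to peel apart the handle decomposition of $DY=Y\cup_{\mathrm{id}}\overline{Y}$ and identify explicitly where each dual handle of $\overline{Y}$ attaches, then appeal to Laudenbach--Poénaru to absorb the 3- and 4-handles. Let me begin by fixing the handle decomposition of $Y$ coming from $\mathcal{K}=L_1\cup L_2$: a single $0$-handle, one $1$-handle for each component of $L_1$, and one $2$-handle for each framed component $(K_i,\phi_i)$ of $L_2$. Turning this decomposition upside down, $\overline{Y}$ inherits a decomposition with one $4$-handle (dual to the $0$-handle), $|L_1|$ many $3$-handles (dual to the $1$-handles), and $|L_2|$ many $2$-handles (dual to the original $2$-handles), attached in order of increasing index starting from $\partial Y$.

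The key local step is to identify the attaching region of each dual $2$-handle. Recall from \autoref{rem: basics for handle decompositions}(7) that the belt sphere of a $4$-dimensional $2$-handle $B^2\times B^2$ attached along $\phi_i:S^1\times B^2\hookrightarrow\partial(M_{L_1}\cup\{\text{previous 2-handles}\})$ is the circle $\{0\}\times\partial B^2\subset\partial(B^2\times B^2)$. After the attachment, this belt circle sits in $\partial Y$ and is isotopic to a meridian $\mu_i$ of $K_i$, since $\{0\}\times\partial B^2$ is identified with $\phi_i(\{\mathrm{pt}\}\times\partial B^2)$. Moreover, the normal disk $\{0\}\times B^2$ to the belt sphere provides a canonical trivialization of its normal bundle in $\partial Y$, and pushing $\{0\}\times\partial B^2$ off to $\{\epsilon\}\times\partial B^2$ one checks that the two parallel circles have linking number $0$ in $\partial(B^2\times B^2)=S^3$; hence the framing of $\mu_i$ inherited from the dual handle structure is precisely the $0$-framing.

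Therefore the dual $2$-handles of $\overline{Y}$ are attached along the framed link $J$ consisting of a $0$-framed meridian $\mu_i$ for each component $K_i$ of $L_2$. This identifies $M_{\mathcal{K}\cup J}$ with the submanifold $Y\cup\{\text{dual 2-handles}\}\subset DY$. What remains of $\overline{Y}$ are the $|L_1|$ dual $3$-handles and the single dual $4$-handle; attaching these to $M_{\mathcal{K}\cup J}$ produces $DY$. The main (and only) nontrivial point is that we need not specify how the $3$-handles and $4$-handle are attached: since $DY$ is a closed $4$-manifold and $M_{\mathcal{K}\cup J}$ is a $4$-dimensional $2$-handlebody, its boundary is automatically diffeomorphic to $\#^{|L_1|}(S^1\times S^2)$, and by Laudenbach--Poénaru \cite{laudenbach1972note} any self-diffeomorphism of $\#^{|L_1|}(S^1\times S^2)$ extends to $\natural^{|L_1|}(S^1\times B^3)$, so the gluing is unique up to diffeomorphism. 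Consequently $\widehat{M_{\mathcal{K}\cup J}}\cong DY$, as required.

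The one subtlety I expect to check carefully is the framing computation in the second step: it is the assertion that the $0$-framing on each meridian $\mu_i$ (in the sense of \autoref{sec: Kirby diagrams}, computed via linking number in $S^3$) agrees with the framing coming from the dual handle structure. This is a standard local model check inside a neighborhood $B^2\times B^2$ of the attaching region of the $2$-handle, independent of the ambient Kirby diagram, and does not depend on the framing $\phi_i$ of $K_i$ itself --- which is why the meridians are always drawn with framing $0$ regardless of the integer labels on $L_2$.
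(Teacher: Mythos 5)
Your proof is correct and follows essentially the same argument the paper sketches inline in the proposition statement: turn the handle decomposition of $\overline{Y}$ upside down, identify the belt circles of the original $2$-handles as the attaching circles of the dual $2$-handles, observe they are $0$-framed meridians of $L_2$, and absorb the $3$- and $4$-handles via Laudenbach--Po\'enaru. You supply the useful extra detail of the local-model framing check (that the push-off $\{\epsilon\}\times\partial B^2$ is unlinked from $\{0\}\times\partial B^2$), which the paper leaves implicit.
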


\begin{example}
The left of \autoref{fig: Mazur manifold} shows a Kirby diagram of the Mazur manifold, a contractible $4$-manifold not homeomorphic to $B^4$ \cite{mazur1961note}. The middle diagram in \autoref{fig: Mazur manifold} shows a Kirby diagram of the double of the Mazur manifold (ignoring the red circle). Here, the $3$-handle and $4$-handle are omitted.
\end{example}

\begin{theorem}[\cite{kirby2006topology}]
Every closed, connected, orientable, smooth $4$-manifold is diffeomorphic to $\widehat{M_{\mathcal{K}}}$ for some Kirby diagram $\mathcal{K}$.
\end{theorem}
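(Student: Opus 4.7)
The plan is to assemble the diagram one handle index at a time from a handle decomposition guaranteed by \autoref{pro: existence of a handle decomposition}. Let $X$ be a closed, connected, orientable, smooth $4$-manifold. First I would invoke \autoref{pro: existence of a handle decomposition} and Remark 2.4(6) to choose a handle decomposition $X_0 \subseteq X_1 \subseteq X_2 \subseteq X_3 \subseteq X_4 = X$ with a single $0$-handle $X_0 = B^4$ and a single $4$-handle. By connectedness, any $1$-handle whose two feet lie in different components of $\partial X_0$ could have been cancelled; since we already have just one $0$-handle there is nothing to do, but analogously I would note (via handle slides if necessary) that we may assume all attaching maps of $1$- and $2$-handles land in a single copy of $S^3 = \partial B^4$.

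Second, I would encode the $1$-handles as a dotted unlink. Each $4$-dimensional $1$-handle is diffeomorphic to $S^1\times B^3$, and attaching one to $B^4$ has the same effect as carving out a properly embedded trivial $2$-disk, since $\overline{B^4 \setminus \nu(D')} \cong S^1 \times B^3$. Collecting the boundaries of these disks in $S^3$ produces a dotted unlink $L_1$ with $M_{L_1} \cong X_1$. Third, for the $2$-handles, their attaching circles sit in $\partial X_1 \cong \#^{|L_1|}(S^1\times S^2)$ and, after a small isotopy, can be assumed to avoid the meridional $2$-spheres capping off the $1$-handle regions; equivalently, they lie in $S^3 \setminus L_1$. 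Together with the framings (integers determined by $\pi_1(SO(2)) \cong \mathbb Z$) this yields the framed link $L_2$, so that $\mathcal{K} = L_1 \cup L_2$ is a Kirby diagram with $M_{\mathcal{K}} \cong X_2$.

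Fourth, I would show that $\partial X_2 \cong \#^r(S^1\times S^2)$ for some $r\geq 0$ so that \autoref{def: Kirby diagram for closed manifold} applies. For this I would pass to the dual handle decomposition of Remark 2.4(8): the handles of $X$ of indices $3$ and $4$ become, dually, handles of indices $1$ and $0$ on the $4$-manifold $\overline{X\setminus\operatorname{int} X_2}$, so the latter is a $4$-dimensional $1$-handlebody $\natural^r(S^1\times B^3)$ with boundary $\#^r(S^1\times S^2)$. The common boundary $\partial X_2$ therefore has the required form.

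Finally, invoking Laudenbach--Poénaru (as recorded in the remark after \autoref{def: Kirby diagram for closed manifold}), any two diffeomorphisms $\#^r(S^1\times S^2)\to \partial M_{\mathcal{K}}$ produce the same closed $4$-manifold up to diffeomorphism, so $X \cong \widehat{M_{\mathcal{K}}}$. The main delicate step is the fourth one: identifying $\partial X_2$ with $\#^r(S^1\times S^2)$. Everything else is essentially repackaging the decomposition into the link-theoretic language already set up in \autoref{sec: Kirby diagrams}; this identification, however, is what legitimately allows us to suppress the $3$- and $4$-handles from the picture and encode $X$ by the link $\mathcal{K}$ alone.
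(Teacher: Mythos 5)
Your argument is correct and is the standard proof of this theorem. Note that the paper itself does not prove this result — it simply cites \cite{kirby2006topology}, so there is no "paper's proof" to compare against; you have essentially reproduced the argument from the cited source. The decomposition into the four steps is sound: (i) a handle decomposition with one $0$-handle and one $4$-handle exists for a closed, connected manifold; (ii) carving out trivial disks $D'_{L_1}$ from $B^4$ yields $\natural^{|L_1|}(S^1 \times B^3)$, so the dotted unlink encodes the $1$-handles; (iii) after isotoping the attaching circles of the $2$-handles off the belt $2$-spheres of the $1$-handles, they live in $S^3 \setminus \nu(L_1)$, and their framings are integers since $\pi_1(SO(2)) \cong \mathbb{Z}$; (iv) turning the decomposition upside down exhibits $\overline{X \setminus \operatorname{int} X_2}$ as an orientable $4$-dimensional $1$-handlebody, hence $\natural^r(S^1 \times B^3)$, so $\partial M_{\mathcal{K}} \cong \#^r(S^1 \times S^2)$ as required. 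Invoking Laudenbach--Po\'enaru to see that the reattachment of the $3$- and $4$-handles is unique up to diffeomorphism — so that it is legitimate to suppress them from the diagram — is exactly the right closing step, and you correctly identify step (iv) as the one that licenses this suppression. One tiny point worth tightening: in your first paragraph the remark about $1$-handle feet lying in different components of $\partial X_0$ is vacuous once there is a single $0$-handle, and the phrase ``attaching maps of $1$- and $2$-handles land in a single copy of $S^3$'' conflates two different ambient $3$-manifolds ($S^3 = \partial X_0$ for the $1$-handles versus $\partial X_1$ for the $2$-handles); your second and third paragraphs handle this correctly, so you could simply delete the speculative sentence.
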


We now define a collection of moves on Kirby diagrams, interpreting isotopies, handle slides, and cancelling pairs of $4$-manifolds in the context of Kirby diagrams.

\begin{definition}
Let $\mathcal{K}\subset S^3$ be a Kirby diagram. Let $K_i, K_j\subset \mathcal{K}$ be two knots and, let $\tilde{K_j}\subset \partial\nu(K_j)$ be a push-off of $K_j$. The knot $K_j$ may be either a dotted unknot or a framed knot. A $2$-dimensional submanifold $b\subset S^3$ is called a \textit{sliding band connecting $K_i$ and $\tilde{K_j}$} if there exists an embedding $e:B^1\times B^1\hookrightarrow S^3$ such that
    \begin{enumerate}
        \item  $b=e(B^1\times B^1)$,
        \item  $b\cap K_i=e(\{-1\}\times B^1)$,
        \item  $b\cap \tilde{K_j}=e(\{1\}\times B^1)$,
        \item  $e((-1,1)\times B^1)\cap (\mathcal{K}\cup \nu(K_j))=\emptyset$.
    \end{enumerate}
 We call \[K_i\#_b \tilde{K_j}=((K_1\cup K_2)\setminus e(\partial B^1\times B^1))\cup e(B^1\times\partial B^1)\] the \textit{manifold obtained from $K_i\cup \tilde{K_j}$ by surgery along $b$} or \textit{connected sum of $K_i$ and $\tilde{K_j}$ along $b$}.
\end{definition}

\begin{definition}\label{def:handle slides on a Kirby diagram} Let $\mathcal{K}=L_1\cup L_2\subset S^3$ be a Kirby diagram, where $L_1$ is the dotted unlink and $L_2$ is a framed link.
    \begin{enumerate}
        \item Let $K_i,K_j\subset L_1$ be two dotted unknots. Let $D_{L_1}$ be the set of the trivial disks of $L_1$ with $\partial D_{L_1}=L_1$. Let $\tilde{K_j}$ be a push-off of $K_j$. Let $b\subset S^3$ be a sliding band connecting $K_i$ and $\tilde{K_j}$ such that $b\cap \operatorname{int}(D_{L_1})=\emptyset$. We call $K_i\#_b \tilde{K_j}$ \textit{the result of sliding $K_i$ over $K_j$} or a \textit{$1$-handle slide over a $1$-handle}. We say that two Kirby diagrams $\mathcal{K}$ and $\mathcal{K'}$ are related by a \textit {$1$-handle slide over a $1$-handle} if $\mathcal{K'}=(\mathcal{K}\setminus K_i)\cup (K_i\#_b \tilde{K_j})$. See the first row of \autoref{fig: Handle slides}.
        \item Let $K_i\subset L_2$ be an $m_i$-framed knot and $K_j\subset L_1$ be a dotted unknot. Let $\tilde{K_j}$ be a push-off of $K_j$. Let $b\subset S^3$ be a sliding band connecting $K_i$ and $\tilde{K_j}.$ We define $K_i\#_b \tilde{K_j}$ as an $(m_i+2\operatorname{lk}(K_i,\tilde{K_j}))$-framed knot and call it \textit{the result of sliding $K_i$ over $K_j$} or a \textit{$2$-handle slide over a $1$-handle}. Here, the linking number $\operatorname{lk}(K_i,\tilde{K_j})$ is calculated by orienting $K_i$ and $\tilde{K_j}$ so that the orientation of $(K_i\cup \tilde{K_j})\setminus b$ extends to the orientation of $K_i\#_b \tilde{K_j}.$ We say that two Kirby diagrams $\mathcal{K}$ and $\mathcal{K'}$ are related by a \textit{$2$-handle slide over a $1$-handle} if $\mathcal{K'}=(\mathcal{K}\setminus K_i)\cup (K_i\#_b \tilde{K_j})$. See the second row of \autoref{fig: Handle slides}.
        \item Let $K_i,K_j\subset L_2$ be $m_i$-framed knot and $m_j$-framed knot, respectively. Let $\tilde{K_j}$ be a push-off of $K_j$. Let $b\subset S^3$ be a sliding band connecting $K_i$ and $\tilde{K_j}.$ We define $K_i\#_b \tilde{K_j}$ as an $(m_i+m_j+2\operatorname{lk}(K_i,\tilde{K_j}))$-framed knot and call it \textit{the result of sliding $K_i$ over $K_j$} or a \textit{$2$-handle slide over a $2$-handle}. Here, the linking number $\operatorname{lk}(K_i,\tilde{K_j})$ is calculated by orienting $K_i$ and $\tilde{K_j}$ so that the orientation of $(K_i\cup \tilde{K_j})\setminus b$ extends to the orientation of $K_i\#_b \tilde{K_j}.$ We say that two Kirby diagrams $\mathcal{K}$ and $\mathcal{K'}$ are related by a \textit{$2$-handle slide over a $2$-handle} if $\mathcal{K'}=(\mathcal{K}\setminus K_i)\cup (K_i\#_b \tilde{K_j})$. See the third row of \autoref{fig: Handle slides}.
    \end{enumerate}   
\end{definition}

We note that a handle slide is originally defined between handles of the same index. However, the notion of a $2$-handle slide over a $1$-handle in \autoref{def:handle slides on a Kirby diagram} arises from the dotted notation for $1$-handles. In fact, this handle slide corresponds to an isotopy of a $2$-handle that does not interact with a $1$-handle; see \cite{gompf20234,akbulut20164} for more details.

\begin{figure}[ht!]
\labellist
\small\hair 2pt
\pinlabel{$-3$} at 95 700
\pinlabel{$-3$} at 773 700
\pinlabel{$-3$} at 95 440
\pinlabel{$-1$} at 770 440
\pinlabel{$-3$} at 95 165
\pinlabel{$0$} at 210 200
\pinlabel{$-1$} at 900 200
\pinlabel{$-3$} at 780 175
\endlabellist
\centering
\includegraphics[width=1\textwidth]{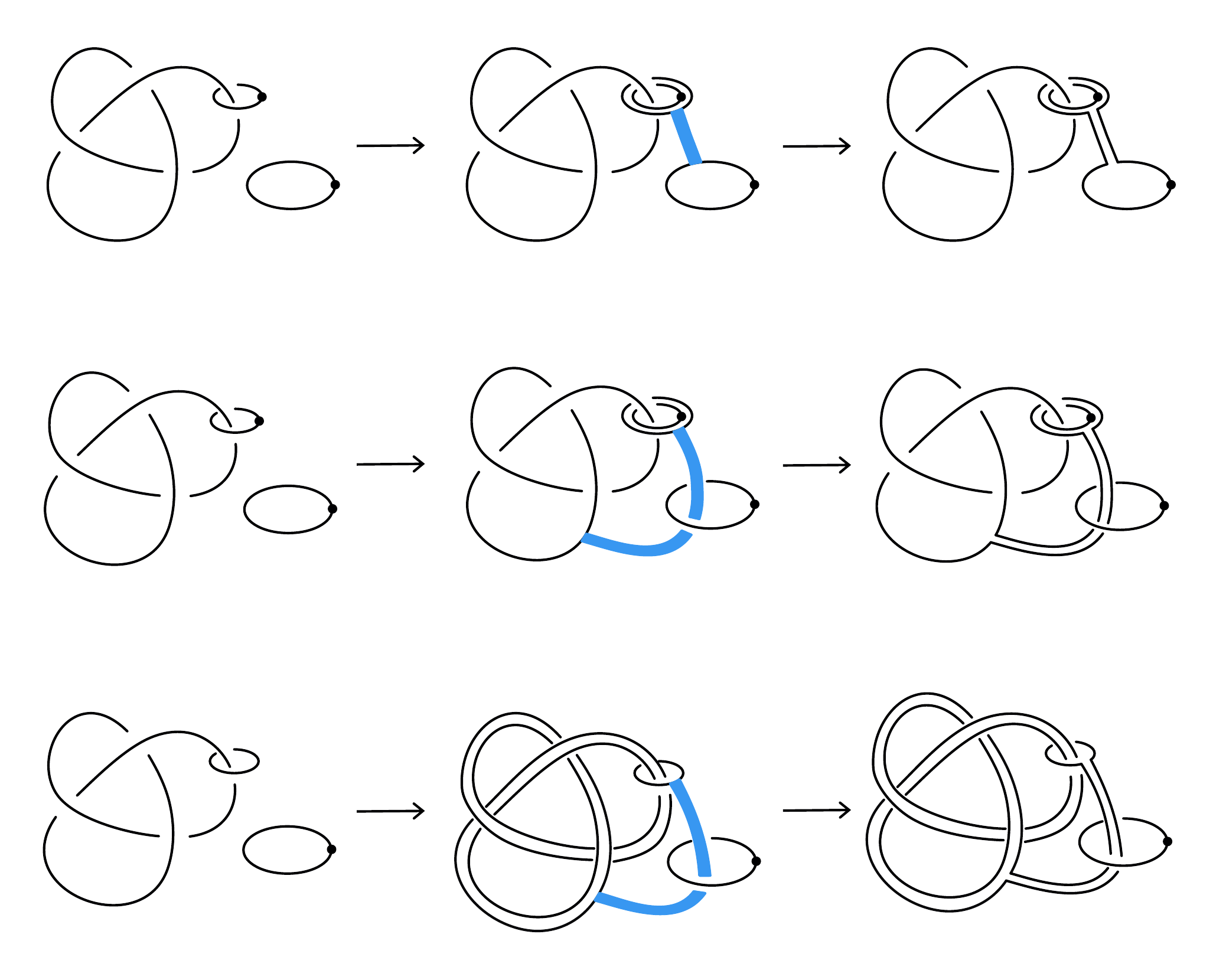}
\caption{Three types of handle slides. \textbf{First row}: A $1$-handle slide over a $1$-handle. \textbf{Second row}: A $2$-handle slide over a $1$-handle. \textbf{Third row}: A $2$-handle slide over a $2$-handle.}
\label{fig: Handle slides}
\end{figure}

\begin{definition}\label{def:cancelling pairs on a Kirby diagram}Let $\mathcal{K}=L_1\cup L_2\subset S^3$ be a Kirby diagram, where $L_1$ is a dotted unlink and $L_2$ is a framed link.
    \begin{enumerate}
        \item Let $L=K_1\cup K_2\subset S^3\setminus\mathcal{K}$ be a two-component link, where $K_1$ is a framed knot and $K_2$ is a dotted meridian of $K_1.$ We call $L$ a \textit{cancelling $(1,2)$-pair}. Let $\mathcal{K'}=\mathcal{K}\cup L$ be a new Kirby diagram. We say that $\mathcal{K'}$ is obtained from $\mathcal{K}$ by \textit{creating} a cancelling $(1,2)$-pair and that $\mathcal{K}$ is obtained from $\mathcal{K'}$ by \textit{annihilating} a cancelling $(1,2)$-pair. See the left of \autoref{fig: Cancelling pairs}.
        \item Let $B\subset S^3$ be a $3$-ball such that $\mathcal{K}\cap B=\emptyset.$ Let $U\subset B$ be a $0$-framed unknot.  We call such a knot $U$ a \textit{cancelling $(2,3)$-pair}. Let $\mathcal{K'}=\mathcal{K}\cup U$ be a new Kirby diagram. We say that $\mathcal{K'}$ is obtained from $\mathcal{K}$ by \textit{creating} a cancelling $(2,3)$-pair and that $\mathcal{K}$ is obtained from $\mathcal{K'}$ by \textit{annihilating} a cancelling $(2,3)$-pair. See the right of \autoref{fig: Cancelling pairs}.
    \end{enumerate}
\end{definition}

\begin{figure}[ht!]
\labellist
\small\hair 2pt
\pinlabel{$n$} at 5 110
\pinlabel{$0$} at 460 100
\endlabellist
\centering
\includegraphics[width=0.6\textwidth]{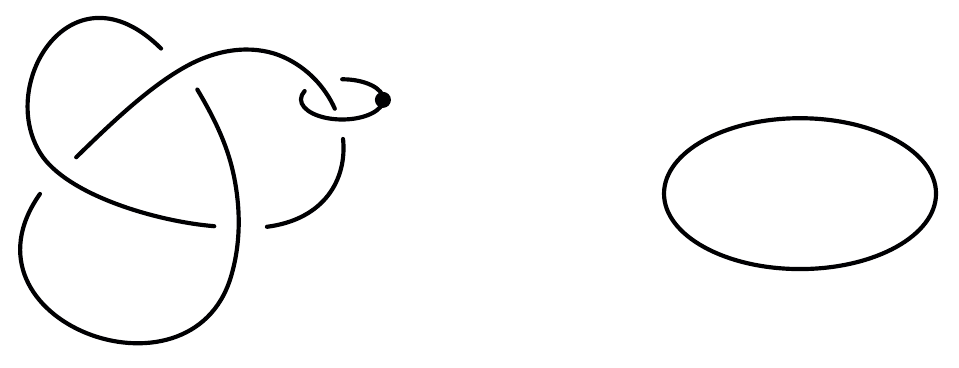}
\caption{\textbf{Left}: A cancelling $(1,2)$-pair. \textbf{Right}: A cancelling $(2,3)$-pair.}
\label{fig: Cancelling pairs}
\end{figure}

We note that for a cancelling $(2,3)$-pair, we do not draw the $3$-handle, which is cancelled with the $2$-handle attached along the $0$-framed unknot $U$. That is, $M_{\mathcal{K}}\cong M_{\mathcal{K}'} \cup $  $3$-handle. More precisely, the $3$-handle is attached along the standard $2$-sphere $\{x_0\}\times S^2\subset \partial M_{\mathcal{K}}\#(S^1\times S^2)\cong \partial M_{\mathcal{K}'}$, where $x_0\in S^1$.

\begin{theorem}[\cite{kirby2006topology}]
Let $\mathcal{K,K'}$ be Kirby diagrams of closed $4$-manifolds. Then $\widehat{M_\mathcal{K}}\cong \widehat{M_{\mathcal{K'}}}$ if and only if they are related by isotopies, handle slides $(1$-handles over $1$-handles, $2$-handles over $1$-handles, and $2$-handles over $2$-handles$)$, and creation/annihilation of cancelling pairs $($$(1,2)$-cancelling pairs and $(2,3)$-cancelling pairs$)$. 
\end{theorem}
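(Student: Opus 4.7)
The \emph{if} direction follows immediately from the propositions in \autoref{sec: handle decomposition}: each Kirby move in \autoref{def:handle slides on a Kirby diagram} and \autoref{def:cancelling pairs on a Kirby diagram} is the diagrammatic shadow of an isotopy, an abstract handle slide, or a cancelling-pair move on the underlying $4$-dimensional handle decomposition, and the framing update in a $2$-handle slide is precisely the linking-number computation for the new longitude. Each move therefore produces a Kirby diagram of a $4$-manifold diffeomorphic to the original, and hence so does any finite composition.

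For the \emph{only if} direction, the plan is to apply Cerf's theorem (\autoref{thm: Cerf's moves}) to the two handle decompositions of $\widehat{M_\mathcal{K}}\cong\widehat{M_{\mathcal{K}'}}$ specified by $\mathcal{K}$ and $\mathcal{K}'$ (each with a unique $0$-handle, $1$-handles, $2$-handles, some $3$-handles, and a unique $4$-handle), obtaining a finite sequence of isotopies, abstract handle slides, and creations/annihilations of cancelling pairs. I would then partition these moves into three groups: (a) those visible at the $1$-/$2$-handle level and directly realizable by a move of \autoref{def:handle slides on a Kirby diagram} or \autoref{def:cancelling pairs on a Kirby diagram}, namely isotopies and slides of $1$- and $2$-handles, $(1,2)$- and $(2,3)$-pair moves, and isotopies of $2$-handle attaching spheres that cross a $1$-handle belt disk (the ``$2$-handle over $1$-handle'' slide); (b) creation/annihilation of $(0,1)$- or $(3,4)$-pairs, which by connectivity I arrange to cancel off immediately so that unique $0$- and $4$-handles are preserved throughout the sequence; and (c) moves touching the $3$-handles, i.e.\ slides of $3$-handles over $3$-handles and isotopies of $3$-handle attaching spheres.

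The main obstacle is group (c), since $3$-handles are invisible in a Kirby diagram. The plan here is to invoke the Laudenbach--Poénaru theorem, already used in \autoref{def: Kirby diagram for closed manifold}: every self-diffeomorphism of $\#^k(S^1\times S^2)$ extends over $\natural^k(S^1\times B^3)$. Collectively, the group-(c) moves leave the $2$-handlebody $M_\mathcal{K}$ unchanged and merely alter how the $3$-handles are attached to $\partial M_\mathcal{K}\cong\#^{|L_1|}(S^1\times S^2)$; the resulting self-diffeomorphism of $\#^{|L_1|}(S^1\times S^2)$ extends over $\natural^{|L_1|}(S^1\times B^3)$, so it can be absorbed into a single self-diffeomorphism of the capping handlebody without modifying the Kirby diagram. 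The surviving group-(a) moves then realize the desired sequence of Kirby-diagram moves relating $\mathcal{K}$ and $\mathcal{K}'$, which completes the proof.
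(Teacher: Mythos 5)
The paper does not actually prove this theorem; it is stated with a citation to Kirby's book \cite{kirby2006topology}, so there is no in-paper argument to compare against. Judged on its own merits, your sketch follows the standard textbook route (Cerf theory plus Laudenbach--Po\'enaru), and the division into groups (a), (b), (c) is the right organizational idea. Two points deserve sharpening, though neither is a fatal gap.

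First, the phrase ``collectively, the group-(c) moves \dots can be absorbed into a single self-diffeomorphism of the capping handlebody'' is imprecise: the group-(c) moves are interleaved with group-(a) moves in the Cerf sequence, so at the moment each one occurs the underlying $2$-handlebody (and hence the boundary $\#^{|L_1|}(S^1\times S^2)$) may be different from $\partial M_{\mathcal{K}}$. The cleaner statement is pointwise rather than collective: each group-(c) move is invisible at the level of the drawn Kirby diagram (it only re-attaches $3$-handles), and by Laudenbach--Po\'enaru the closed $4$-manifold determined by a Kirby diagram is independent of how the $3$- and $4$-handles are attached; therefore each such move preserves both the diagram and the diffeomorphism type of the resulting closed manifold, so it may simply be deleted from the sequence of diagram-level moves. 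Second, the group-(b) normalization (``arrange to cancel $(0,1)$- and $(3,4)$-pairs off immediately'') is the standard connectedness argument, but as written it is an assertion rather than an argument; it deserves at least a sentence explaining that a newly created $0$-handle can be slid to meet an existing $1$-handle in a cancelling configuration, and dually for $4$-handles, so that one can maintain unique $0$- and $4$-handles at every stage. With these clarifications your argument matches the accepted proof of this theorem.
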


We refer to the moves defined in \autoref{def:handle slides on a Kirby diagram} and \autoref{def:cancelling pairs on a Kirby diagram} as \textit{Kirby moves}.

\begin{definition}
Let $\mathcal{K}\subset S^3$ be a Kirby diagram. Let $B\subset S^3$ be a $3$-ball such that $\mathcal{K}\cap B=\emptyset.$ Let $K\subset B$ be a $(\pm1)$-framed unknot. Let $\mathcal{K'}=\mathcal{K}\cup K$ be a new Kirby diagram. We say that $\mathcal{K'}$ is obtained from $\mathcal{K}$ by \textit{blowing up} and that $\mathcal{K}$ is obtained from $\mathcal{K'}$ by \textit{blowing down}. 
\end{definition}

We note that $M_{K}$ is diffeomorphic to $\mathbb{C}P^2\setminus \operatorname{int}(B^4)$ when $K$ is a $1$-framed unknot and to $\overline{\mathbb{C}P^2}\setminus \operatorname{int}(B^4)$ when $K$ is a $(-1)$-framed unknot. In either case, $\partial M_K\cong S^3$.

\begin{theorem}[\cite{lickorish1962representation}]
Every closed, orientable, connected $3$-manifold is diffeomorphic to $\partial M_{\mathcal{K}}$ for some Kirby diagram $\mathcal{K}.$ In particular, we can assume that $\mathcal{K}$ has no dotted unlink.
\end{theorem}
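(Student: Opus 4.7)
The plan is to prove Lickorish's theorem by combining three classical facts: the existence of Heegaard splittings, Lickorish's result that the mapping class group of a closed orientable surface is generated by Dehn twists, and the interpretation of Dehn twists via integral surgery on a framed link.

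First, I would recall that every closed, orientable, connected $3$-manifold $M$ admits a Heegaard splitting $M = H_1 \cup_{\phi} H_2$, where $H_1$ and $H_2$ are handlebodies of some common genus $g$ and $\phi : \partial H_2 \to \partial H_1$ is an orientation-reversing diffeomorphism. In parallel, $S^3$ admits its standard genus-$g$ Heegaard splitting $S^3 = H_1 \cup_{\phi_0} H_2$. Identifying the two splittings on $H_1$ and $H_2$, I see that $M$ differs from $S^3$ only by the gluing map, and one can write $M$ as the result of cutting $S^3$ along the Heegaard surface $\Sigma_g$ and re-gluing via the self-diffeomorphism $\psi = \phi_0^{-1} \circ \phi$ of $\Sigma_g$.

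Next, I would invoke Lickorish's theorem that the mapping class group of $\Sigma_g$ is generated by Dehn twists along a finite set of simple closed curves. Hence one can factor $\psi = \tau_{c_1}^{\epsilon_1} \circ \cdots \circ \tau_{c_n}^{\epsilon_n}$, where each $c_i \subset \Sigma_g$ is a simple closed curve and each $\epsilon_i \in \{+1,-1\}$. The key step is then to translate this factorization into surgery: performing $(-\epsilon_i)$-framed surgery on $c_i \subset \Sigma_g \subset S^3$, where the framing is induced by a normal vector field along $\Sigma_g$, alters the gluing of the Heegaard splitting precisely by the Dehn twist $\tau_{c_i}^{\epsilon_i}$ (a standard ``surgery realizes a Dehn twist'' computation in a tubular neighborhood of $c_i$). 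Iterating this $n$ times, $M$ is realized as $\partial M_{\mathcal{K}}$, where $\mathcal{K} = c_1 \cup \cdots \cup c_n \subset S^3$ is a framed link with $\pm 1$ framings; since every component is a framed knot (with no dotted unknots present), the ``in particular'' clause of the statement is automatic.

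The main obstacle is justifying the surgery--Dehn twist correspondence carefully: namely, verifying that $(\pm 1)$-framed surgery on a simple closed curve lying on the Heegaard surface, with framing induced by the surface, exactly implements the corresponding Dehn twist in the regluing map. This is a local calculation in a neighborhood $c_i \times B^2$ of the curve, but it requires tracking orientations and framing conventions consistently. Once this correspondence is in place, the theorem follows directly from the existence of Heegaard splittings and Lickorish's generation of $\mathrm{MCG}(\Sigma_g)$ by Dehn twists.
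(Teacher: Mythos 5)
The paper provides no proof of this statement; it is cited directly to Lickorish's 1962 paper, and your outline is a faithful reconstruction of Lickorish's original argument (Heegaard splittings, generation of the mapping class group by Dehn twists, and the translation of a twist into a $\pm 1$ surgery along the corresponding curve). The skeleton is sound and does prove what the paper asserts, namely that $\mathcal{K}$ can be taken to consist only of a framed link (no dotted unlink, i.e.\ no $1$-handles).

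Two points deserve more care if you were to flesh this out. First, the ``$\pm 1$ framing'' you obtain is measured against the framing induced by the Heegaard surface $\Sigma_g$, not the Seifert longitude; when you record $c_i$ in a Kirby diagram the integer coefficient is the Seifert-framed surgery coefficient, which need not be $\pm 1$. This is harmless for the theorem as stated (any integer framings suffice, since all that is claimed is that $\mathcal{K}$ has no dotted components), but one should not conflate the two normalizations. Second, the curves $c_1,\dots,c_n$ all lie on the same surface $\Sigma_g$ and may intersect one another, so $c_1\cup\cdots\cup c_n$ is not automatically a link in $S^3$. The standard fix---implicit in your ``iterating this $n$ times''---is to push $c_i$ to the parallel copy $\Sigma_g\times\{t_i\}$ in a collar of the Heegaard surface, with the $t_i$ pairwise distinct and ordered so that the twists compose in the right order; only then is the resulting disjoint union genuinely a framed link giving a Kirby diagram. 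With these two clarifications the argument is complete.
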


\begin{theorem}[\cite{kirby1978calculus}]\label{thm: Kirby moves for diffemorphic 3-manifolds}
Let $\mathcal{K}$ and $\mathcal{K}'$ be Kirby diagrams. Let $\tilde{\mathcal{K}}$ and $\tilde{\mathcal{K'}}$ be Kirby diagrams obtained from $\mathcal{K}$ and $\mathcal{K}'$, respectively, by replacing each dotted unlink with a $0$-framed unlink. Then $\partial M_{\mathcal{K}}\cong\partial M_{\mathcal{K}'}$ if and only if $\tilde{\mathcal{K}}$ and $\tilde{\mathcal{K'}}$ are related by isotopies, $2$-handle slides over $2$-handles, and blow-ups or blow-downs. 
\end{theorem}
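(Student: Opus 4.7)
The forward implication follows directly from material already established in the preliminaries. Isotopies and handle slides do not alter the diffeomorphism type of $M_{\tilde{\mathcal{K}}}$, and therefore preserve $\partial M_{\tilde{\mathcal{K}}}$. A blow-up modifies $M_{\tilde{\mathcal{K}}}$ by an interior connected sum with $\pm \mathbb{C}P^2$, which changes the boundary only by connected sum with $S^3$ (since $\partial(\mathbb{C}P^2 \setminus \operatorname{int} B^4) \cong S^3$), hence trivially. Combined with Remark~2.14(5), which records $\partial M_{\mathcal{K}} \cong \partial M_{\tilde{\mathcal{K}}}$, this shows that the boundary $3$-manifold is preserved under every listed move.

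For the reverse implication --- the substantive content of Kirby calculus --- fix a diffeomorphism $f \colon \partial M_{\tilde{\mathcal{K}}} \xrightarrow{\cong} \partial M_{\tilde{\mathcal{K}'}}$ and form the closed, oriented, smooth $4$-manifold
\[
X \;=\; M_{\tilde{\mathcal{K}}} \cup_{f} \bigl(-M_{\tilde{\mathcal{K}'}}\bigr).
\]
Since the oriented cobordism group $\Omega_4^{SO} \cong \mathbb{Z}$ is detected by the signature with generator $\mathbb{C}P^2$, performing $|\sigma(X)|$ blow-ups of the appropriate sign on either $\tilde{\mathcal{K}}$ or $\tilde{\mathcal{K}'}$ reduces us, without loss of generality, to the case $\sigma(X) = 0$. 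After this reduction $X$ bounds a compact, oriented, smooth $5$-manifold $V$.

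View $V$ as a $5$-dimensional cobordism of $4$-manifolds relative to the identified boundary $3$-manifold, so that its two ``ends'' are $M_{\tilde{\mathcal{K}}}$ and $M_{\tilde{\mathcal{K}'}}$. Using Morse theory (\autoref{pro: existence of a handle decomposition}) together with the standard handle-trading and handle-cancellation arguments of \autoref{thm: Cerf's moves}, one may arrange that $V$ is built from $M_{\tilde{\mathcal{K}}} \times [0,1]$ by attaching only $5$-dimensional $2$- and $3$-handles. The core of the proof is then a handle-by-handle dictionary: attaching a $5$-dimensional $2$-handle along a framed circle $\gamma \subset \operatorname{int} M_{\tilde{\mathcal{K}}}$ (and dually a $3$-handle) can, after isotoping $\gamma$ into a collar of the boundary and sliding across the existing $2$-handles of $\tilde{\mathcal{K}}$, be realized at the level of the Kirby diagram by a finite composition of isotopies, $2$-handle slides over $2$-handles, and blow-ups/blow-downs. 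Composing these local translations along a Morse trajectory of $V$ yields the required sequence of moves taking $\tilde{\mathcal{K}}$ to $\tilde{\mathcal{K}'}$.

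The main obstacle is precisely this dictionary between elementary $5$-dimensional cobordisms of $4$-manifolds and the restricted Kirby-move set available once dots have been replaced by $0$-framings (no $1$-handle slides and no $(1,2)$- or $(2,3)$-cancelling pairs are permitted). The $(\pm 1)$-framed unknots introduced by blow-ups are exactly what is needed to compensate for the signature discrepancies that would otherwise obstruct the realization of these modifications using only isotopies and $2$-handle slides.
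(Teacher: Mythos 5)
The paper does not prove this theorem; it states it with a citation to Kirby \cite{kirby1978calculus}, so there is no internal argument to compare with, and your proposal must be judged as a sketch of Kirby's original $5$-dimensional proof. Your forward direction is correct. In the reverse direction, the overall architecture you describe (form a closed $4$-manifold, use $\Omega_4^{SO}\cong\mathbb{Z}$ to kill the signature by blow-ups, bound a $5$-manifold $V$, reduce to a relative cobordism built from $2$- and $3$-handles, and translate each handle into diagram moves) is indeed the shape of Kirby's argument.

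The gap is in the step you yourself flag as ``the main obstacle,'' which is where essentially all the content lives. Attaching a $5$-dimensional $2$-handle to $M_{\tilde{\mathcal{K}}}\times[0,1]$ along a framed null-homotopic circle $\gamma$ in the interior performs a $1$-surgery on $M_{\tilde{\mathcal{K}}}$, which, once $\gamma$ is isotoped into a ball, replaces $M_{\tilde{\mathcal{K}}}$ by its connected sum with either $S^2\times S^2$ or $S^2\tilde{\times}S^2\cong\mathbb{C}P^2\#\overline{\mathbb{C}P^2}$, according to the framing class in $\pi_1(SO(3))\cong\mathbb{Z}/2$. The second case is two blow-ups, but the first --- adding an unlinked $0$-framed Hopf link --- is \emph{not} a composition of isotopies, $2$-handle slides over $2$-handles, and blow-ups/downs, and nothing in the preliminaries you cite removes this obstruction. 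Kirby's proof handles it by guaranteeing an odd-framed component in the diagram (for instance via a preliminary blow-up), so that the null-homotopic circle $\gamma$ can be pushed into a ball with either framing and the $S^2\times S^2$ case converts to the $\mathbb{C}P^2\#\overline{\mathbb{C}P^2}$ case; you also need the dual observation for $3$-handles. Separately, you assert without argument that $V$ admits a relative handle decomposition with only $2$- and $3$-handles; this requires first surgering $V$ to be simply connected and then executing handle-trading in a $5$-dimensional cobordism with corners, which is not supplied by \autoref{thm: Cerf's moves} as stated. The sketch therefore records the right plan but leaves its two load-bearing steps unproved.
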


\autoref{thm: Kirby moves for diffemorphic 3-manifolds} can be used to determine whether a given Kirby diagram $\mathcal{K}$ represents a closed $4$-manifold, that is, whether $\partial M_{\mathcal{K}}\cong\#^k(S^1\times S^2)$ for some $k\geq0$. If this is the case, then the Kirby diagram $\tilde{\mathcal{K}}$ (obtained from $\mathcal{K}$ by replacing the dotted unlink with a $0$-framed unlink) and a $k$-component $0$-framed unlink are related by isotopies, $2$-handle slides over $2$-handles, and blow-ups or blow-downs.

\subsection{Banded unlink diagrams for surfaces in 4-manifolds}\label{sec: banded unlink diagrams} 

\begin{definition}
A \textit{singular link} $L$ in a $3$-manifold $Z$ is the image of an immersion $i:\coprod^m S^1\rightarrow Z$ that is injective except at isolated transverse double points. At each double point $p$, we include a small disk $v\cong B^2$ embedded in $Z$ such that $(v,v\cap L)\cong(B^2,\{(x,y)\in B^2|xy=0\}).$ We refer to these disks as the \textit{vertices} of $L$.
\end{definition}

\begin{definition}
A \textit{marked singular link} $(L,\sigma)$ in a $3$-manifold $Z$ is a singular link $L$ together with decorations $\sigma$ on the vertices of $L$, as follows. Let $v$ be a vertex of $L$ with $\partial v\cap \overline{(L\setminus v)}$ consisting of the four points $p_1,p_2,p_3,p_4$ in cyclic order. Choose a co-orientation of the disk $v$. On the positive side of $v,$ add an arc connecting $p_1$ and $p_3.$ On the negative side of $v,$ add an arc connecting $p_2$ and $p_4.$ See the left of \autoref{fig: Singular points}. 

Let $L^{+}$ denote the link in $Z$ obtained from $(L,\sigma)$ by pushing the arc of $L$ between $p_1$ and $p_3$ off $v$ in the positive direction, and repeating at every vertex in $L.$ This is called the \textit{positive resolution} of $(L,\sigma)$; see the top right of \autoref{fig: Singular points}.

Similarly, let $L^{-}$ denote the link in $Z$ obtained from $(L,\sigma)$ by pushing the arc of $L$ between $p_1$ and $p_3$ off $v$ in the negative direction, and repeating at each vertex in $L.$ This is called the \textit{negative resolution} of $(L,\sigma)$; see the bottom right of \autoref{fig: Singular points}.

For each marked vertex $v$ of $L,$ these opposite push-offs form a bigon in a neighborhood of $v$, which bounds an embedded disk $c_v.$ This disk may be chosen so that its interior intersects $L$ transversely in a single point near $v.$ We call such a disk a \textit{companion disk} of $v$; see the middle right of \autoref{fig: Singular points}. We denote $C_L$ by the union of all of the companion disks. 
\end{definition}

\begin{figure}[ht!]
\labellist
\small\hair 2pt
\pinlabel{$L$} at 45 100
\pinlabel{$v$} at 70 155
\pinlabel{$L^{-}$} at 345 60
\pinlabel{$L\cup c_v$} at 345 155
\pinlabel{$L^{+}$} at 345 255
\endlabellist
\centering
\includegraphics[width=0.59\textwidth]{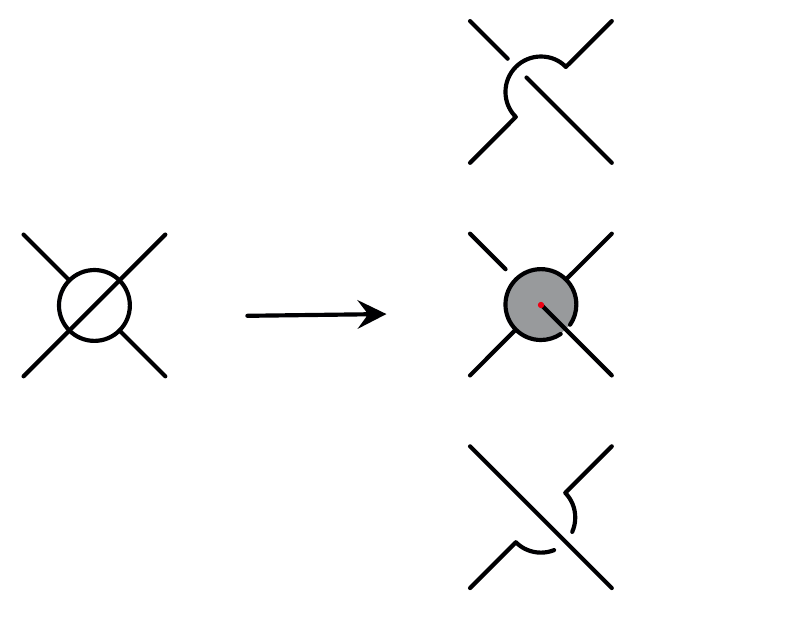}
\caption{\textbf{Left}: A vertex $v$ of $(L,\sigma)$. \textbf{Top right}: The positive resolution of $(L,\sigma)$. \textbf{Middle right}: A union of $L$ and a companion disk $c_v$. \textbf{Bottom right}: The negative resolution of $(L,\sigma)$.}
\label{fig: Singular points}
\end{figure}

\begin{definition}
Let $L$ be a marked singular link in $Z$, and let $V_L$ denote the union of the vertices of $L$. A \textit{band} $b$ attached to $L$ is the image of an embedding $\phi:B^1\times B^1\rightarrow Z\setminus V_L$ such that $b\cap L =\phi(B^1\times \{-1,1\})$. We call $\phi(B^1\times \{0\})$ the \textit{core} of the band $b$. 

Let $L_b$ be the singular link defined by $L_b=(L- \phi(\{-1,1\}\times B^1))\cup \phi(B^1\times\{-1,1\}).$ We say that $L_b$ is the result of performing \textit{band surgery} to $L$ along $b$. 

If $B$ is a finite collection of pairwise disjoint bands for $L$, then we denote by $L_B$ the singular link obtained by performing band surgery along each band in $B$. We say that $L_B$ is the result of \textit{resolving} the bands in $B$. Note that the self-intersections of $L_B$ naturally correspond to those of $L$, so a choice of markings for $L$ induces markings for $L_B$. 

A triple $(L,\sigma,B)$, where $(L,\sigma)$ is a marked singular link and $B$ is a collection of disjoint bands for $L$, is called a \textit{marked singular banded link}. To ease notation, we may refer to the pair $(L,B)$ as a \textit{singular banded link} and implicitly remember that L is a marked singular link. 

We call $(L,B)$ a \textit{banded link} if $L$ has no vertices. In this case, the negative and positive resolutions $L^-$ and $L^+$ are both identified with $L$, and the collection of companion disks is $C_L=\emptyset$. 
\end{definition}

\begin{definition}\label{def:singular banded unlink diagram}
Let $\mathcal{K}=L_1\cup L_2\subset S^3$ be a Kirby diagram with $\partial M_{\mathcal{K}}\cong\#^k(S^1\times S^2)$. Let $(L,B)$ be a singular banded link in $S^3\setminus \mathcal{K}$. The triple $(\mathcal{K},L,B)$ is called a \textit{singular banded unlink diagram} if $L^-$ is the unlink in $S^3\setminus L_1$ and $L^{+}_{B}$ is the unlink in $\partial M_{\mathcal{K}}$. We refer to a triple $(\mathcal{K},L,B)$ as \textit{banded unlink diagram} if $(L,B)$ is a banded link (without singular points), $L$ is the unlink in $S^3\setminus L_1$, and $L_B$ is the unlink in $M_{\mathcal{K}}$.
\end{definition}

\begin{remark}
The singular banded link $(L,B)$ is in $S^3\setminus \mathcal{K}$, so it is also in $\partial M_{\mathcal{K}}$ because $S^3\setminus\mathcal{K}\subset\partial M_{\mathcal{K}}$ by the construction of $M_\mathcal{K}$. Therefore, we may view $L^{+}_B$ as a link in $\partial M_{\mathcal{K}}\cong\#^k(S^1\times S^2)$.
\end{remark}

\begin{example}\label{exercise: singular banded unlink diagram}
Let $(\mathcal{K},L,B)=(\mathcal{K},J_1\cup J_2, B_1\cup B_2)$ be the diagram in the left of \autoref{fig: cobordism from S^4 to non-simply connected homology sphere}, where $\mathcal{K}$ is the black $0$-framed Hopf link, $J_1$ is the red unknot, $B_1=\emptyset$, $J_2$ is the blue $3$-component unlink, and $B_2$ is the set of blue bands attached to $J_2$. We can verify that
    \begin{enumerate}
        \item $(\mathcal{K},J_1,B_1)$ is a banded unlink diagram,
        \item $(\mathcal{K},J_2,B_2)$ is a banded unlink diagram,
        \item $(\mathcal{K},L,B)$ is a singular banded unlink diagram.
    \end{enumerate}
\end{example}

\begin{definition}\label{def: realizing surface}
Let $(\mathcal{K},L,B)$ be a singular banded unlink diagram, where $\mathcal{K}=L_1\cup L_2\subset S^3$ and $\partial M_{\mathcal{K}}\cong\#^k(S^1\times S^2)$. Let $C_{L}$ be the collection of the companion disks of $L$. Let $D_{L^{-}}\subset S^3$ be the collection of the trivial disks bounded by the unlink $L^{-}\subset S^3\setminus L_1$, and let $D_{L^+_B}\subset \partial M_{\mathcal{K}}$ be the collection of the trivial disks bounded by $L^+_B\subset S^3\setminus \mathcal{K}\subset\partial M_\mathcal{K}$.  

Consider a diffeomorphism $\phi:S^3\times[-1,0]\rightarrow N\subset B^4$ from $S^3\times[-1,0]$ to a collar $N$ of $B^4$ such that $\phi(x,0)=x$ for every $x\in S^3=\partial B^4$, i.e., $\partial B^4$ is identified with $S^3\times\{0\}$. We define a properly immersed surface $F$ in $S^3\times[-1,0]$ as follows:
    \begin{equation*}
    F \cap (S^3\times\{t\})=
        \begin{cases}
        L^+_B\times \{t\} & \hspace{5mm} t\in (-\frac{1}{3},0] \\
        (L^+\cup B)\times \{t\} & \hspace{5mm} t=-\frac{1}{3} \\
        L^+\times\{t\} & \hspace{5mm} t \in (-\frac{2}{3},-\frac{1}{3}) \\
        (L^-\cup C_L)\times \{t\} & \hspace{5mm} t=-\frac{2}{3} \\
        L^- \times \{t\} & \hspace{5mm} t\in [-1,-\frac{2}{3}).
        \end{cases}
    \end{equation*}
    
Then $F$ is a properly immersed surface in $S^3\times[-1,0]$ with two boundary components \[(L^{-}\times\{-1\})\coprod (L^{+}_B\times\{0\})\subset S^3\times\{-1\}\coprod S^3\times\{0\},\] and with isolated transverse self-intersections contained in $S^3\times\{-\frac{2}{3}\}$. Define a properly immersed surface in $B^4$ by \[S(\mathcal{K},L,B)=\phi(F\cup (D_{L^{-}}\times\{-1\}))\subset B^4.\] 
Here, $F\cup (D_{L^{-}}\times\{-1\})$ is obtained from $F$ by capping off $L^{-}\times\{-1\}$ with trivial disks $D_{L^-}\times\{-1\}$.

Note that $\partial S(\mathcal{K},L,B)=L^+_B$ in $S^3\setminus\mathcal{K}$, so $S(\mathcal{K},L,B)$ is also properly immersed in $M_{\mathcal{K}}$, where $M_{\mathcal{K}}$ is obtained from $B^4$ by carving out the properly embedded trivial disks in $B^4$ bounded by $L_1$ and attaching $2$-handles along $L_2$. Since $L^+_B$ is the trivial link in $\partial M_{\mathcal{K}}$, we may regard $S(\mathcal{K},L,B)$ as properly immersed in $\widehat{M_{\mathcal{K}}}^\circ=\widehat{M}_\mathcal{K}\setminus \text{4-handle}$, where $\widehat{M_\mathcal{K}}$ is obtained from $M_{\mathcal{K}}$ by attaching $k$ $3$-handles along the $2$-spheres $\coprod^k(\{x_0\}\times S^2)\subset \#^k(S^1\times S^2)\cong\partial M_{\mathcal{K}}$ and a $4$-handle. The $3$-handles can be attached so that $L^+_B$ is still the trivial link in $\partial \widehat{M_\mathcal{K}}^\circ\cong S^3$.

Finally, define an immersed surface in $\widehat{M_{\mathcal{K}}}$ by \[\widehat{S(\mathcal{K},L,B)}=S(\mathcal{K},L,B)\cup D_{L^{+}_B}\subset \widehat{M_{\mathcal{K}}}.\]
\end{definition}

\begin{figure}[ht!]
\labellist
\small\hair 2pt
\pinlabel{$0$} at 155 190
\pinlabel{$0$} at 325 192
\pinlabel {\textcolor[RGB]{56,151,241}{$B_2$}}  at 53 235
\pinlabel {\textcolor[RGB]{56,151,241}{$J_2$}}  at 275 320
\pinlabel {\textcolor[RGB]{236,0,20}{$J_1$}}  at 280 220
\pinlabel{$0$} at 585 195
\pinlabel{$0$} at 415 190
\pinlabel{$0$} at 805 195
\pinlabel{$0$} at 975 200
\pinlabel {\textcolor[RGB]{56,151,241}{$0$}}  at 760 227
\pinlabel {\textcolor[RGB]{56,151,241}{$0$}}  at 705 260
\pinlabel {\textcolor[RGB]{56,151,241}{$0$}}  at 710 116
\pinlabel {\textcolor[RGB]{56,151,241}{$0$}}  at 655 149
\endlabellist
\centering
\includegraphics[width=0.9\textwidth]{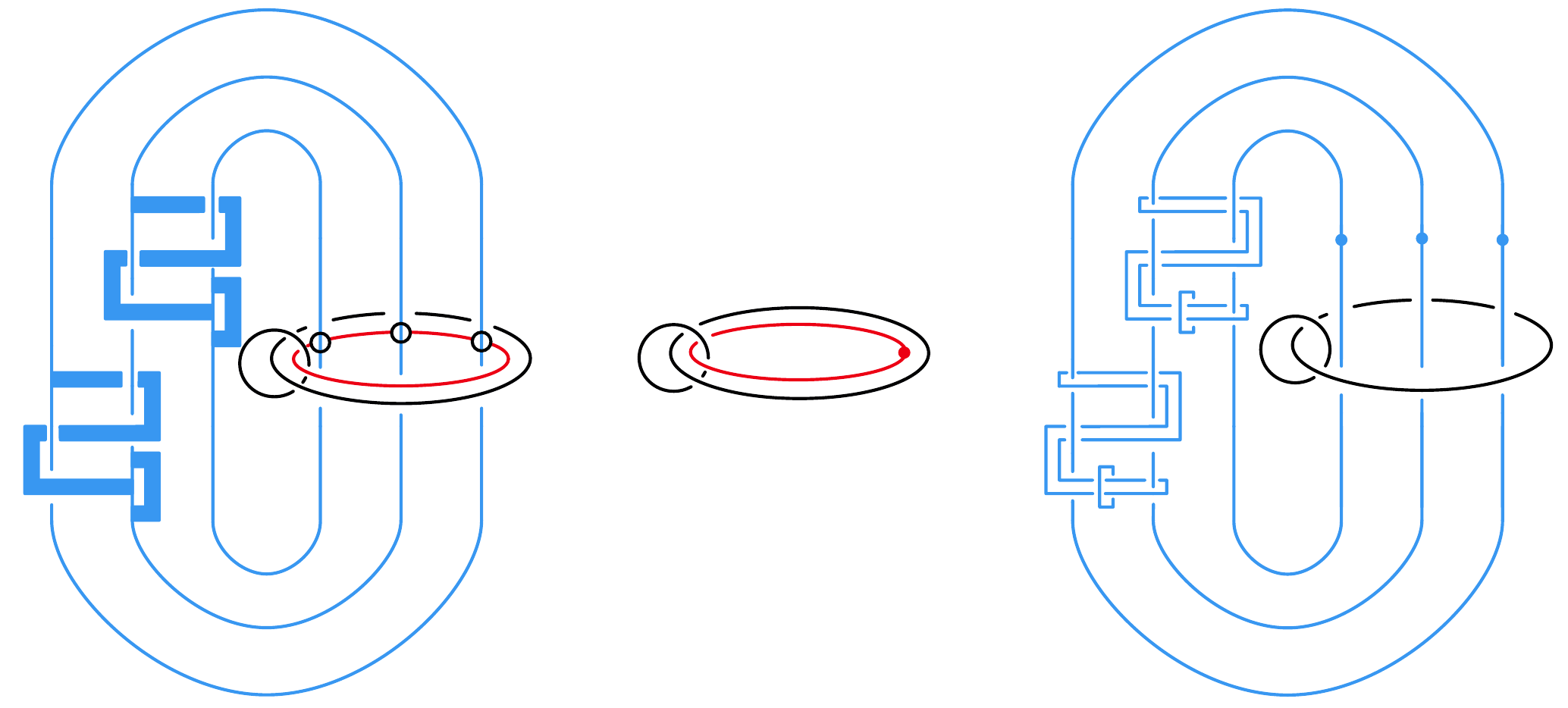}
\caption{\textbf{Left}: A Heegaard diagram $(\Sigma,\alpha,\beta)$ of a $5$-dimensional cobordism from $S^4$ to a non-simply connected homology $4$-sphere, consisting of a $2$-handle and a $3$-handle that are algebraically but not geometrically cancelled. Alternatively, it can be interpreted as a Heegaard diagram of a contractible $5$-manifold with a $0$-handle, a $2$-handle, and a $3$-handle, which is not homeomorphic to $B^5$. Here, $\Sigma$ (in black) represents $S^2\times S^2$, $\alpha$ (in red) is the belt sphere of the $2$-handle representing $\{x_0\}\times S^2\subset S^2\times S^2$, and $\beta$ (in blue) is the attaching sphere of the $3$-handle representing a $2$-knot homotopic but not isotopic to $S^2\times \{y_0\}\subset S^2\times S^2$. \textbf{Middle}: A Kirby diagram of $\Sigma(\alpha)$, which is diffeomorphic to $S^4$. \textbf{Right}: A Kirby diagram of $\Sigma(\beta)$, which is diffeomorphic to the non-simply connected homology $4$-sphere. }
\label{fig: cobordism from S^4 to non-simply connected homology sphere}
\end{figure}

\begin{remark}\label{rem: realizing surface properties}\hfill
    \begin{enumerate}
        \item $S(\mathcal{K},L,B)$ is properly immersed in $B^4$, $M_{\mathcal{K}}$, and $\widehat{M_{\mathcal{K}}}^\circ$.
        \item $D_{L^+_B}$ is embedded in $\partial M_{\mathcal{K}}$ and $\partial \widehat{M_{\mathcal{K}}}^\circ$.
        \item If $(\mathcal{K},L,B)$ is a banded unlink diagram, then $\widehat{S(\mathcal{K},L,B)}$ is an embedded surface in $\widehat{M_{\mathcal{K}}}$.
        \item The Euler characteristic is $\chi(S(\mathcal{K},L,B))=|L^{-}|-|B|+|L^{+}_B|$. 
    \end{enumerate}
\end{remark}

\begin{definition}
Let $(\mathcal{K},L,B)$ and $(\mathcal{K},L',B')$ be singular banded unlink diagrams, where $\mathcal{K}=L_1\cup L_2\subset S^3$. We say that $(\mathcal{K},L,B)$ and $(\mathcal{K},L',B')$ are related by \textit{singular band moves} if $(\mathcal{K},L',B')$ is obtained from  $(\mathcal{K},L,B)$ by a sequence of moves in \autoref{fig: Band moves} and \autoref{fig: Singular moves}. These moves include:
    \begin{enumerate}
        \item Isotopy in $S^3\setminus\mathcal{K}$,
        \item Cup/cap moves,
        \item Band slides,
        \item Band swims,
        \item Slides of bands over components of $L_2$,
        \item Swims of bands about $L_2$,
        \item Slides of unlinks and bands over $L_1$,
        \item Sliding a vertex over a band,
        \item Passing a vertex past the edge of a band,
        \item Swimming a band through a vertex.
    \end{enumerate} We refer to moves $(1)-(7)$, which do not involve the self-intersections of $L$,
as \textit{band moves} (omitting the word ``singular"). The remaining moves $(8)-(10)$ are specific to interactions between singular points and bands.
\end{definition}

\begin{figure}[ht!]
\labellist
\small\hair 2pt
\pinlabel{$(1)$ isotopy in $S^3\setminus\mathcal{K}$} at 205 550
\pinlabel{cap} at 153 410
\pinlabel{cup} at 253 410
\pinlabel{$(2)$} at 153 370
\pinlabel{$(2)$} at 253 370

\pinlabel{band} at 205 285
\pinlabel{slide} at 205 260
\pinlabel{$(3)$} at 205 220

\pinlabel{band} at 205 130
\pinlabel{swim} at 205 105
\pinlabel{$(4)$} at 205 65

\pinlabel{band/} at 648 620
\pinlabel{$2$-handle} at 648 595
\pinlabel{slide} at 648 570
\pinlabel{$(5)$} at 648 529
\pinlabel{$n$} at 584 565
\pinlabel{$n$} at 780 565
\pinlabel\rotatebox{78}{$n$} at 763 525

\pinlabel{band/} at 648 456
\pinlabel{$2$-handle} at 648 431
\pinlabel{swim} at 648 406
\pinlabel{$(6)$} at 648 365
\pinlabel{$n$} at 535 345
\pinlabel{$n$} at 790 345

\pinlabel{$(7)$} at 648 210
\pinlabel{slides over dotted circles} at 648 170

\pinlabel{$(7)$} at 648 60
\endlabellist
\centering
\includegraphics[width=1\textwidth]{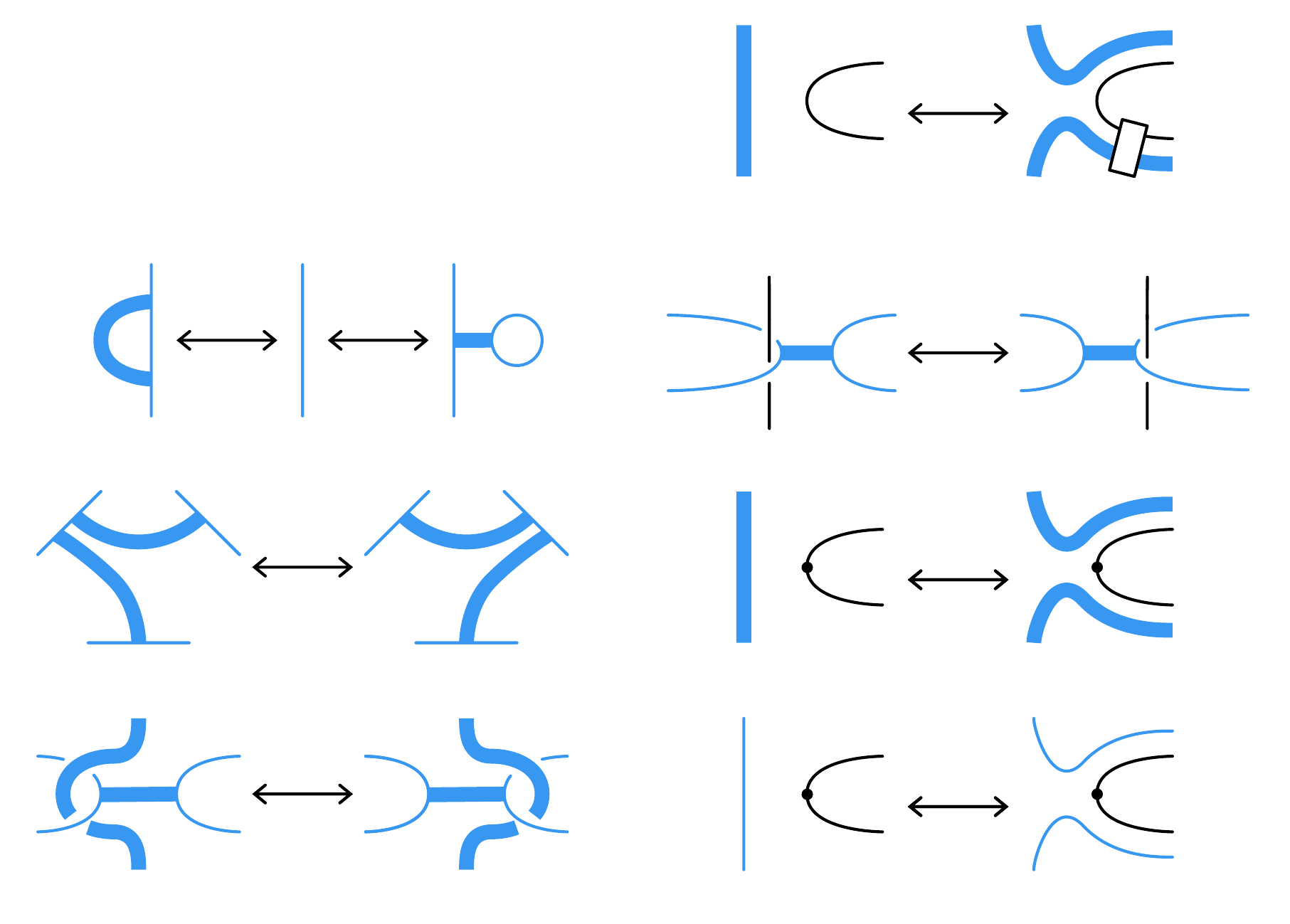}
\caption{Singular band moves without singular points.}
\label{fig: Band moves}
\end{figure}

\begin{figure}[ht!]
\labellist
\small\hair 2pt
\pinlabel{intersection/band} at 205 390
\pinlabel{slide} at 205 370
\pinlabel{$(8)$} at 205 340

\pinlabel{intersection/band} at 205 237
\pinlabel{swim} at 205 217
\pinlabel{$(9)$} at 205 187

\pinlabel{intersection/band} at 205 90
\pinlabel{pass} at 205 70
\pinlabel{$(10)$} at 205 40
\endlabellist
\centering
\includegraphics[width=0.55\textwidth]{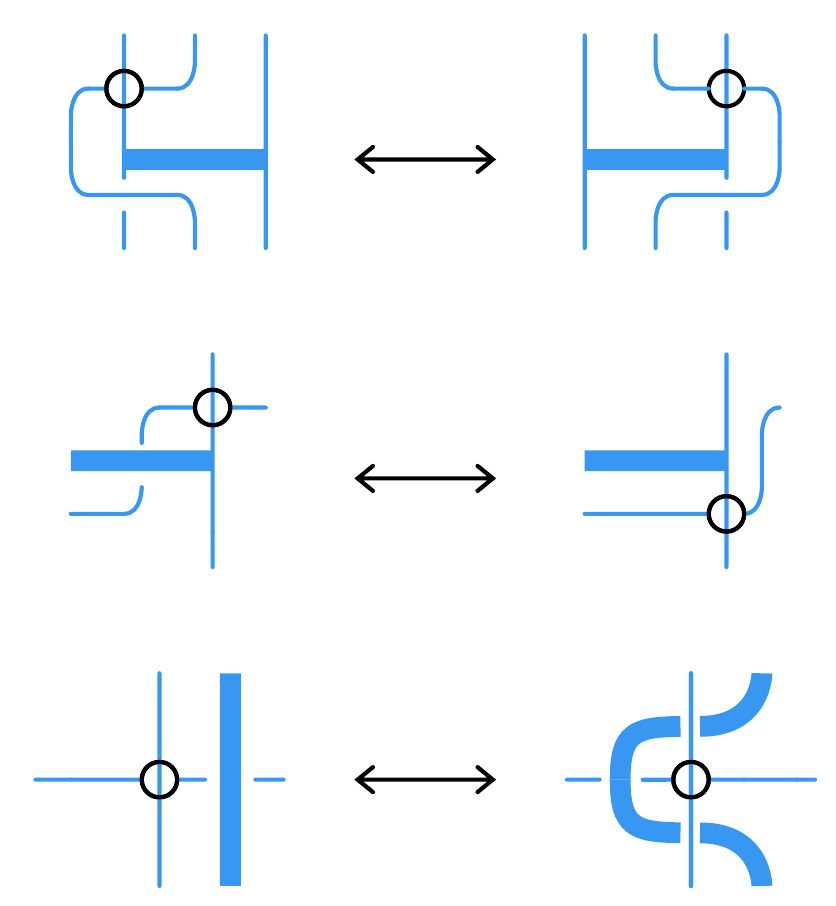}
\caption{Singular band moves with singular points.}
\label{fig: Singular moves}
\end{figure}

\begin{theorem}[\cite{hughes2020isotopies},\cite{hughes2021band}]\label{thm: existence of singular banded unlink diagram}
Let $(Y,F)$ be a pair, where $Y$ is a closed, connected, orientable $4$-manifold and $F\subset Y$ is an immersed surface. Then there exists a singular banded unlink diagram $(\mathcal{K},L,B)$ such that $(Y,F)$ is diffeomorphic to $(\widehat{M_{\mathcal{K}}},
\widehat{S(\mathcal{K},L,B)})$.
\end{theorem}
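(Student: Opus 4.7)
The approach is Morse-theoretic: decompose $Y$ via a Kirby diagram, then arrange $F$ to sit in a standard ``movie'' position inside a collar of the $0$-handle's boundary. First, I would fix a handle decomposition of $Y$ with a single $0$-handle $B^4$, yielding $Y \cong \widehat{M_\mathcal{K}}$ for some Kirby diagram $\mathcal{K} \subset S^3$. The goal is to isotope $F$ so that all of its topology, together with its finitely many transverse self-intersections, is concentrated in a collar $S^3 \times [-1,0]$ of $\partial B^4$ inside $B^4$, while the handles of $Y$ of index $\geq 1$ meet $F$ only in standard disks or annuli. Using general position and the isotopy extension theorem, one can push $F$ off the cores of the $1$-handles and off the cocores of the $3$-handles, and shrink its intersection with each $2$-handle to a standard collection of horizontal disks; this removes all topology of $F$ from the handles above the $0$-handle.

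Next, I would pick a Morse function $h : F \to [-1,0]$ compatible with the collar projection and reorder its critical values so that all maxima of $h$ lie at $t = -1$, all saddles near $t = -1/3$, and all minima at $t = 0$. The maxima trace out a link $L^-$ on $S^3 \times \{-1\}$ that bounds disjoint disks pushed down into $B^4$; the saddles provide the bands $B$; and the minima yield the link $L^+_B$ on $S^3 \times \{0\}$, which caps off to $F$ by embedded disks passing up through the $2$-, $3$-, and $4$-handles of $Y$. The finitely many double points of $F$ are pushed independently to a single level slightly above $t = -1$: at each one, the transverse double-point structure, together with a coherent co-orientation of the local separating disk, records the marking $\sigma$, and the bigon swept out by the two resolutions realizes the companion disk $c_v$.

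Finally, one must verify the unlink conditions in \autoref{def:singular banded unlink diagram}. The maxima disks for $L^-$ lie in $B^4$ and can be arranged disjoint from the cocores of the $1$-handles, whose boundaries are $L_1$; pushing these disks to $S^3$ shows $L^-$ is an unlink in $S^3 \setminus L_1$. Dually, $L^+_B$ bounds disjoint embedded disks obtained from the minima of $h$ capped off through the higher-index handles of $Y$. Since $\partial M_\mathcal{K} \cong \#^k(S^1 \times S^2)$ and any link in the boundary that bounds disjoint disks in $\natural^k(S^1 \times B^3)$ is trivial in the boundary (by the Laudenbach--Po\'enaru result cited above), $L^+_B$ is genuinely an unlink in $\partial M_\mathcal{K}$. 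Comparing the reconstruction in \autoref{def: realizing surface} with the normal form of $F$ then identifies $(\widehat{M_\mathcal{K}}, \widehat{S(\mathcal{K},L,B)})$ with $(Y,F)$ up to diffeomorphism.

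The main obstacle is Step~1: producing a sufficiently standardized isotopy of $F$ that confines all its topology and singularities to the collar of the $0$-handle while preserving enough structure to read off valid singular link data. For embedded surfaces this already requires substantial work, handled in \cite{hughes2020isotopies}, and the immersed case demands additional care because isotopies of $F$ that drag strands past one another must either avoid creating spurious double points or correctly account for them via the local moves (8)--(10) of \autoref{fig: Singular moves}. Once this normal form is achieved, the translation between the resulting Morse data on $F$ and the combinatorial data $(L,\sigma,B)$ is essentially a direct unpacking of definitions.
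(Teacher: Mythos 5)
The paper does not prove this theorem; it is quoted verbatim from Hughes--Kim--Miller, so there is no in-text proof to compare against. Your Morse-theoretic strategy --- fix a handle decomposition of $Y$, isotope $F$ into normal form so that all its topology and self-intersections live in a collar of $\partial B^4$, then read off $(L,\sigma,B)$ from a height function on $F$ --- does match the route taken in those references, and you are right that the standardization of $F$ is where all the work is.

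There is, however, a genuine gap in your verification of the second unlink condition. You claim that since $L^+_B$ bounds disjoint embedded disks in $\natural^k(S^1\times B^3)$ and "any link in the boundary that bounds disjoint disks in $\natural^k(S^1 \times B^3)$ is trivial in the boundary (by the Laudenbach--Po\'enaru result)," it follows that $L^+_B$ is an unlink in $\partial M_\mathcal{K}$. This implication is false: a link in the boundary of a $4$-manifold that bounds disjoint embedded disks in the interior is merely slice, not unknotted, and Laudenbach--Po\'enaru (extending self-diffeomorphisms of $\#^k(S^1\times S^2)$ over $\natural^k(S^1\times B^3)$) is about a completely different question. Already for $k=0$ there are nontrivial slice knots in $S^3=\partial B^4$. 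The actual argument must \emph{build} the normal form so that $F$ meets the $1$-, $2$-, $3$-, and $4$-handles in boundary-parallel disks whose boundary circles form the unlink $L^+_B$ inside $\partial M_\mathcal{K}$ itself; the disks $D_{L^+_B}\subset\partial M_\mathcal{K}$ come for free from that normal form, not from an abstract property of slice links.

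A smaller point: for the existence statement you need only a single ambient isotopy of $Y$ carrying $F$ into normal form, and ambient isotopy preserves the transverse double points set-wise, so your concern about "creating spurious double points" and needing moves (8)--(10) is misplaced here --- those moves enter for the uniqueness theorem, not existence.
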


\begin{theorem}[\cite{hughes2020isotopies},\cite{hughes2021band}]
Let $(\mathcal{K},L,B)$ and $(\mathcal{K},L',B')$ be singular banded unlink diagrams. Then $\widehat{S(\mathcal{K},L,B)}$ and $\widehat{S(\mathcal{K},L',B')}$ are isotopic in $\widehat{M_{\mathcal{K}}}$ if and only if they are related by singular band moves.
\end{theorem}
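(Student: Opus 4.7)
The theorem has two directions. The reverse direction is a routine verification: for each of the ten moves in Figures 6 and 7, one checks that the realized immersed surface in $\widehat{M_\mathcal{K}}$ is unchanged up to ambient isotopy. For moves (1)--(7), which do not involve vertices, I would invoke the corresponding statement for embedded surfaces from \cite{hughes2020isotopies}, since each such move is supported in a 4-ball disjoint from the double points of $L$ and therefore lifts an existing embedded-case move. For moves (8)--(10), I would exhibit an explicit 1-parameter family of immersed surfaces in a small 4-ball containing the vertex and the relevant portion of the band: the movie interpolates between positive and negative resolutions while dragging the transverse double point either along the band core, across its edge, or across its interior, according to the given move.

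For the forward direction I would use a generic movie argument in the spirit of \cite{hughes2020isotopies, hughes2021band}. Fix a Morse function $h : \widehat{M_\mathcal{K}} \to [-1,1]$ adapted to the handle decomposition of $\widehat{M_\mathcal{K}}$, so that on the 0-handle (and away from the cores of the 1- and 2-handles) the level sets are the standard $S^3$-slices carrying $\mathcal{K}$ and $(L,B)$, while near the cores $h$ takes a standard quadratic model. Given an ambient isotopy $\Phi_t$ of $\widehat{M_\mathcal{K}}$ carrying $\widehat{S(\mathcal{K},L,B)}$ to $\widehat{S(\mathcal{K},L',B')}$, perturb $\Phi_t$ (keeping $\Phi_0$ and $\Phi_1$ fixed) so that the 2-parameter family $(t,s) \mapsto \Phi_t(\widehat{S(\mathcal{K},L,B)}) \cap h^{-1}(s)$ is generic with respect to both the Kirby data and the singular set. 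At all but finitely many times $t$, the slice at each relevant level is a singular banded link realizing a singular banded unlink diagram with fixed $\mathcal{K}$, and at the exceptional times one observes one of the following transitions: a Reidemeister move of $L^+$ or $L^-$ (move (1)), a birth/death of a small trivial unknotted component (move (2)), band slides or swims past other bands or past themselves (moves (3)--(4)), crossings of $L$ or of a band through a cocore disk of a 2-handle (moves (5)--(6)), crossings through one of the disks representing a 1-handle (move (7)), or one of the three singular-point interactions (moves (8)--(10)). Matching each generic transition with a move in the list then assembles the isotopy into a finite sequence of singular band moves.

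The principal obstacle is verifying completeness: one must show that the ten listed moves exhaust all codimension-one strata in the space of generic 1-parameter families of immersed surfaces in $\widehat{M_\mathcal{K}}$ with prescribed Kirby data. This requires a careful stratification of the singular locus of the movie, tracking how the transverse double points of the immersion can interact with the band cores, with the cocores of 2-handles, with the belt disks of 1-handles, and with each other. The embedded case already accounts for moves (1)--(7), so the real content is showing that the only new codimension-one phenomena created by the presence of transverse double points are moves (8)--(10); in particular one must rule out, for a generic isotopy, any simultaneous double-point coincidence, any double point passing through a 1- or 2-handle core, or any Reidemeister-type move that creates or destroys a double point without being expressible as a composition of the listed moves together with the embedded-case moves.
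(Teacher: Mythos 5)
This theorem is not proved in the paper — it is imported verbatim from \cite{hughes2020isotopies} and \cite{hughes2021band} as background, so there is no internal proof to compare against. Your sketch does follow the same broad strategy as those references (a Cerf-style movie argument for the forward direction, a local verification for each listed move in the reverse direction), but it is an outline rather than a proof: the completeness step that you correctly flag as the ``principal obstacle'' is left entirely open, and that step is essentially the theorem. Nothing in your forward-direction argument explains why the ten listed moves exhaust the generic codimension-one events of the two-parameter family, nor does it account for how the double points of the immersion interact with the distinct level bands in which $L^-$, $C_L$, $L^+$, $B$, and $L^+_B$ live in the definition of $S(\mathcal{K},L,B)$ — a generic isotopy can carry a vertex across the boundary of one of those bands, and one must check that this is captured by moves (8)--(10).

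Two smaller points. In the reverse direction, the claim that each of moves (1)--(7) is ``supported in a 4-ball disjoint from the double points'' is wrong for (5)--(7), which by definition interact with the Kirby data $\mathcal{K}$ (cocores of $2$-handles, belt disks of $1$-handles) and are not supported in a ball; what you actually want is only that these moves are disjoint from the vertices of $L$, so that they reduce to the embedded case of \cite{hughes2020isotopies}. In the forward direction, you worry about ``any Reidemeister-type move that creates or destroys a double point,'' but the theorem hypothesizes an isotopy in $\widehat{M_\mathcal{K}}$, which carries the transverse double-point set by a diffeotopy and so cannot change the number of self-intersections; that concern would only arise for a regular homotopy, which is not what is being classified here.
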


\begin{theorem}[\cite{hughes2020isotopies},\cite{hughes2021band}]
Let $(\mathcal{K},L,B)$ and $(\mathcal{K}',L',B')$ be singular banded unlink diagrams. Then $(\widehat{M_{\mathcal{K}}},\widehat{S(\mathcal{K},L,B)})$ and $(\widehat{M_{\mathcal{K}'}},\widehat{S(\mathcal{K}',L',B')})$ are diffeomorphic if and only if they are related by Kirby moves and singular band moves.
\end{theorem}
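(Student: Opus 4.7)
The plan is to combine the Kirby calculus theorem from Section~2.2 with the preceding theorem on singular band moves (in which $\mathcal{K}$ is fixed), exploiting the fact that Kirby moves act on the ambient $4$-manifold while leaving a generically positioned immersed surface essentially alone. The backward direction is bookkeeping: singular band moves preserve the isotopy class of $\widehat{S(\mathcal{K},L,B)}$ in $\widehat{M_\mathcal{K}}$ by the previous theorem, and Kirby moves preserve $\widehat{M_\mathcal{K}}$ up to diffeomorphism by the Kirby calculus theorem. Since the support of any single Kirby move is contained in a compact region of $S^3$ (an arc together with a tubular neighborhood for a slide, a 3-ball for a blow-up or a cancelling pair), a preliminary application of the isotopy move~(1) positions $(L,B)$ in the complement of this support, after which the ambient diffeomorphism realizing the Kirby move is the identity on $(L,B)$.

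For the forward direction, suppose $\Phi : (\widehat{M_\mathcal{K}}, \widehat{S(\mathcal{K},L,B)}) \to (\widehat{M_{\mathcal{K}'}}, \widehat{S(\mathcal{K}',L',B')})$ is a diffeomorphism of pairs. By the Kirby calculus theorem, there is a sequence of Kirby moves $\mathcal{K} = \mathcal{K}_0 \to \dots \to \mathcal{K}_n$ and a diffeomorphism $\Psi : \widehat{M_{\mathcal{K}_n}} \to \widehat{M_{\mathcal{K}'}}$ which, under the identifications provided by the Kirby moves, agrees with $\Phi$ up to ambient isotopy. We upgrade each step $\mathcal{K}_i \to \mathcal{K}_{i+1}$ to a step of singular banded unlink diagrams $(\mathcal{K}_i, L_i, B_i) \to (\mathcal{K}_{i+1}, L_{i+1}, B_{i+1})$ transporting the singular banded link. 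The mechanism is to first isotope $(L_i, B_i)$ into general position with respect to the support of the move using singular band moves: slide and swim moves (5)--(7) handle unavoidable interactions with components of $L_2$ and the dotted unlink $L_1$, while moves (8)--(10) handle interactions involving the singular points of $L_i$ and their companion disks. With $(L_i,B_i)$ displaced from the support, the Kirby move then takes place in its complement and does not alter it as a singular banded link, giving the next diagram.

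After the full sequence we obtain a singular banded unlink diagram $(\mathcal{K}', L_n, B_n)$ which, by construction, represents an immersed surface isotopic to $\widehat{S(\mathcal{K}',L',B')}$ in $\widehat{M_{\mathcal{K}'}}$. Applying the previous theorem on singular band moves with $\mathcal{K}'$ fixed produces a chain of singular band moves from $(\mathcal{K}',L_n,B_n)$ to $(\mathcal{K}',L',B')$, and concatenating yields the desired sequence of Kirby and singular band moves. The main obstacle is verifying, by case analysis, that the listed band moves suffice to displace $(L,B)$ from the support of every Kirby move. The representative difficulty is a $2$-handle slide over a $2$-handle: the sliding band sweeps a 2-dimensional region in $S^3$ that may be crossed by cores of $B$, by components of $L$, and by companion disks $C_L$ at singular points, and resolving each type of intersection requires moves (5), (6), (9), and (10) applied in a controlled order. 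Creation/annihilation of cancelling pairs and blow-ups/blow-downs are comparatively straightforward, as their supports are 3-balls that can be placed disjoint from $(L,B)$ by generic positioning.
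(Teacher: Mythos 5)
The statement in question is one of the three theorems the paper cites to Hughes--Kim--Miller (\cite{hughes2020isotopies}, \cite{hughes2021band}) and states without proof, so there is no ``paper's own proof'' to compare against; what follows is an assessment of your sketch on its own terms.

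Your overall architecture --- realize the ambient diffeomorphism by Kirby moves, transport the singular banded link through each move after displacing it from the move's support via singular band moves, then finish with the fixed-$\mathcal{K}$ theorem --- matches the structure of the Hughes--Kim--Miller argument at a coarse level. The backward direction is essentially right as written. The forward direction, however, rests on an assertion that is not delivered by the theorem you invoke. The Kirby calculus theorem as stated in this paper is a bare biconditional: $\widehat{M_{\mathcal{K}}}\cong\widehat{M_{\mathcal{K}'}}$ if and only if the diagrams are related by handle slides and cancelling pairs. It says nothing about \emph{which} diffeomorphism the resulting sequence of moves realizes. Your sketch needs the stronger, Cerf-theoretic refinement: given a specific diffeomorphism $\Phi$, one can pull back the handle decomposition of $\widehat{M_{\mathcal{K}'}}$ along $\Phi$ and use a generic path of Morse functions to connect it to the original decomposition of $\widehat{M_{\mathcal{K}}}$, so that the composite of the move-induced diffeomorphisms is isotopic to $\Phi$. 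Without this, there is a genuine gap: the diffeomorphism $\phi$ produced by the Kirby moves could differ from $\Phi$ by a self-diffeomorphism of $\widehat{M_{\mathcal{K}'}}$ that is not isotopic to the identity, in which case the transported surface $(\mathcal{K}',L_n,B_n)$ need not be isotopic to $\widehat{S(\mathcal{K}',L',B')}$, and the final appeal to the fixed-$\mathcal{K}$ theorem fails.

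Two smaller points. First, the case analysis showing that moves $(5)$--$(10)$ suffice to clear $(L,B)$ out of the support of an arbitrary handle slide or cancellation is the technical heart of the theorem, not a routine verification: the bands $B$ and the companion disks $C_L$ are $2$-dimensional in $S^3$, the sweep of a sliding band is $2$-dimensional, and one cannot expect to separate them by transversality alone; the singular band moves were designed precisely to absorb these unavoidable interactions, and a complete argument must exhibit this for each move type. Your sketch names this as ``the main obstacle'' but does not carry it out. Second, a terminological slip: blow-ups and blow-downs are Kirby moves for $3$-manifold boundaries in this paper's conventions, not moves relating Kirby diagrams of diffeomorphic closed $4$-manifolds; the relevant moves here are handle slides and creation/annihilation of $(1,2)$- and $(2,3)$-cancelling pairs.
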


\begin{example}
The pairs $(S^2\tilde{\times}S^2, F)$ and $ (\mathbb{C}P^2\#\overline{\mathbb{C}P^2},\mathbb{C}P^1\#\overline{\mathbb{C}P^1})$ are diffeomorphic, where $F$ denotes a fiber of the non-trivial bundle over $S^2$.
\end{example}

\begin{proof}
The diagram in the left of \autoref{fig: nontrivial s2bundle over s2 with a fiber} is obtained from the one in the right of \autoref{fig: nontrivial s2bundle over s2 with a fiber} by sliding the $(-1)$-framed unknot over the $1$-framed unknot along the obvious band.
\end{proof}

\begin{figure}[ht!]
\labellist
\small\hair 2pt
\pinlabel{$1$} at 30 150
\pinlabel{$0$} at 207 150
\pinlabel{$1$} at 305 150
\pinlabel{$-1$} at 532 150
\endlabellist
\centering
\includegraphics[width=0.65\textwidth]{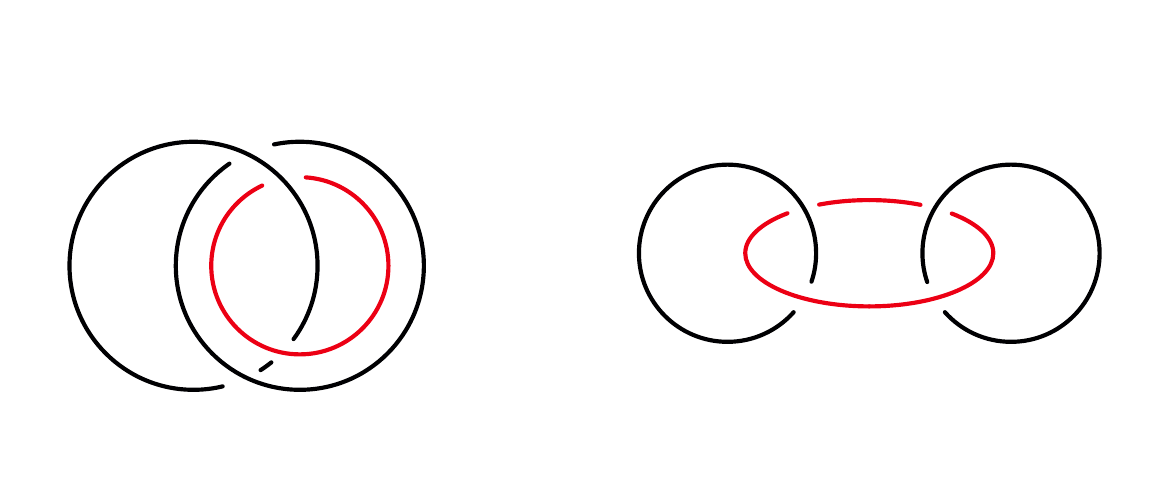}
\caption{$(S^2\tilde{\times}S^2, F)\cong (\mathbb{C}P^2\#\overline{\mathbb{C}P^2},\mathbb{C}P^1\#\overline{\mathbb{C}P^1})$, where $F$ is a fiber of $S^2\tilde{\times}S^2$.}
\label{fig: nontrivial s2bundle over s2 with a fiber}
\end{figure}

\subsection{Kirby diagrams for $1$- and $2$- surgery}\label{sec: Kirby diagrams for surgeries}

We illustrate how to obtain a Kirby diagram of $1$- and $2$-surgery on a $4$-manifold. Note that $1$-surgery corresponds to attaching a $5$-dimensional $2$-handle, and $2$-surgery corresponds to attaching a $5$-dimensional $3$-handle.

We begin by describing how to obtain a Kirby diagram of $1$-surgery on an arbitrary $4$-manifold using a pair of a Kirby diagram and an embedded circle in the Kirby diagram.

\begin{proposition}[A Kirby diagram of $1$-surgery]\label{pro: Kirby diagram for 1-surgeries}
Let $(Y,\gamma)$ be a pair, where $Y$ is a closed $4$-manifold and $\gamma\subset Y$ is an embedded circle. Let $(\mathcal{K},c)$ be a pair, where $\mathcal{K}$ is a Kirby diagram of $Y$ and $c\subset S^3\setminus\mathcal{K}$ is an embedded circle representing $\gamma$. Then we can obtain a Kirby diagram $\mathcal{K}'$ of the $1$-surgery \[Y(\gamma)=\overline{Y\setminus \nu(\gamma)}\cup (B^2\times S^2)\] on $Y$ along $\gamma$ by following these steps:
    \begin{enumerate}
        \item Start with the pair $(\mathcal{K},c)$; see the top left of \autoref{fig: Kirby diagram for 1-surgery }.
        \item Add a cancelling $(1,2)$-pair to $\mathcal{K}$, where the $2$-handle $c$ with one of two possible framings, and the $1$-handle is a dotted meridian $m$ of $c$; see the top right of \autoref{fig: Kirby diagram for 1-surgery }.
        \item Replace the dotted meridian with a $0$-framed $2$-handle; see the bottom left of \autoref{fig: Kirby diagram for 1-surgery }.
    \end{enumerate}
\end{proposition}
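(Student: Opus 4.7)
The plan is to observe that step (2) preserves the diffeomorphism type of $Y$ by introducing a cancelling pair, and then to interpret step (3) as the local handle exchange $S^1\times B^3 \leftrightarrow S^2\times B^2$ that implements $1$-surgery on the core circle of the newly introduced $1$-handle. The crux will be to identify this core circle with $\gamma$.

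First I would verify that step (2) does not change the diffeomorphism type. Since $m$ is a dotted meridian of $c$, the attaching circle $c$ of the new $2$-handle $h_c$ meets the belt sphere of the new $1$-handle $h_m$ transversely at exactly one point. Hence $(h_m,h_c)$ is a cancelling $(1,2)$-pair for either of the two compatible integer framings on $c$ (differing by a handle slide of $c$ over $m$), so by the cancellation proposition $\widehat{M_{\mathcal{K}\cup\{m,c\}}}$ is diffeomorphic to $Y$.

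Next I would identify the core circle of $h_m$ with $\gamma$. In the dotted-unknot model for a $1$-handle, the core circle of $h_m$ inside the ambient $4$-manifold is isotopic to a meridian of $m$ in $S^3\setminus\mathcal{K}$, pushed slightly into the interior. Because $m$ was drawn as a meridian of $c$, the Hopf-link symmetry shows that $c$ itself is (up to isotopy in $S^3\setminus m$) a meridian of $m$, i.e.\ a representative of the core circle $\gamma_m$ of $h_m$. Since $c$ was chosen to represent $\gamma$ in $Y$, the core circle $\gamma_m$ also represents $\gamma$.

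Finally I would analyze step (3). Writing $Y\cong N\cup_{S^1\times S^2}(S^1\times B^3)$, where the $S^1\times B^3$ factor is the tubular neighborhood $B^4\cup h_m$ of $\gamma_m$ and $N$ is the union of all remaining handles, the replacement of the dotted meridian $m$ by a $0$-framed $2$-handle $m'$ exchanges this factor $S^1\times B^3\cong M_U$ for $S^2\times B^2\cong M_{U'}$, glued along the same $S^1\times S^2$ boundary (the local model noted in the preliminaries). This exchange is by definition $1$-surgery on the core of $h_m$, which by the previous paragraph is $\gamma$; hence $\widehat{M_{\mathcal{K}'}}\cong Y(\gamma)$, as required. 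The main obstacle is the middle paragraph: carefully checking that $c$ really is isotopic in $Y_1=\widehat{M_{\mathcal{K}\cup\{m,c\}}}$ to the core circle of the new $1$-handle $h_m$, rather than to some nearby but distinct curve. The framing ambiguity in the statement reflects the $\pi_1(SO(3))=\mathbb{Z}/2$-torsor structure on framings of a circle in a $4$-manifold, and both compatible framings of $c$ yield diffeomorphic $1$-surgeries via the extendable self-diffeomorphisms of $S^1\times S^2$.
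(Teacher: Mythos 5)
Your overall structure matches the paper's (very terse) proof: step (2) preserves $Y$ by cancellation, and step (3) is the dot--zero exchange $S^1\times B^3\leftrightarrow S^2\times B^2$, which is $1$-surgery on the core circle $\gamma_m$ of the new $1$-handle. The paper simply asserts that this core circle is $\gamma$; you try to justify it, which is the right instinct, but your justification has a gap.

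The problem is in the middle paragraph. You write that ``the Hopf-link symmetry shows that $c$ itself is (up to isotopy in $S^3\setminus m$) a meridian of $m$.'' This is false in general: $c$ is allowed to be an arbitrary embedded circle in $S^3\setminus\mathcal{K}$, so it may be knotted in $S^3$ and/or may wind around other components of $\mathcal{K}$. In that case $c\cup m$ is not a Hopf link (one of its components is knotted), and $c$ is certainly not isotopic in $S^3\setminus m$ --- let alone in $S^3\setminus(\mathcal{K}\cup m)$ --- to a small meridian of $m$. What is true is only the homological statement that $c$ links $m$ once, which does not pin down an isotopy class in the link complement. You explicitly flag this as ``the main obstacle,'' but you do not resolve it, and the Hopf-link argument you offer does not resolve it either.

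The isotopy must be carried out in the $4$-manifold $Y$, not in $S^3\setminus m$. Here is one clean way to close the gap. Let $\tilde c$ be a parallel push-off of $c$ pushed slightly into the interior, so that $\tilde c$ is an honest embedded circle in $\widehat{M_{\mathcal{K}\cup\{m,c\}}}\cong Y$ representing $\gamma$. The $0$-handle together with the cancelling pair $(h_m,h_c)$ forms a submanifold $Z\cong B^4$ of $Y$, and both $\gamma_m$ (the core circle of $h_m$) and $\tilde c$ (which lies near the cocore of $h_c$) are embedded circles in the interior of $Z$. Since every embedded circle in $B^4$ is unknotted and any two are isotopic (codimension $\geq 3$), $\gamma_m$ and $\tilde c$ are isotopic in $Z$, hence in $Y$, so $\gamma_m$ represents $\gamma$. (Alternatively, one may observe that $\tilde c$ can be slid across the carved-out disk of the dotted circle $m$ inside $Y$ to change any crossings with itself or with $\mathcal{K}$, reducing to the unknotted local picture where your Hopf-link observation does apply; this is a genuinely $4$-dimensional move, not an isotopy in $S^3\setminus m$.) With this repair, the rest of your argument --- the identification of $B^4\cup h_m\cong S^1\times B^3$ as a tubular neighborhood of $\gamma_m$, the dot--zero exchange as $1$-surgery, and the framing discussion via $\pi_1(SO(3))\cong\mathbb{Z}_2$ --- is correct and in line with the paper.
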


\begin{proof}
Introducing a cancelling $(1,2)$-pair in step (2) still represents $Y$. The dot-zero exchange in (3) corresponds to performing $1$-surgery on $Y$ along $\gamma$, removing $S^1\times B^3$ and gluing in $B^2\times S^2$. Since $\pi_1 (SO(3))\cong \mathbb{Z}_2$, the circle $\gamma$ admits two possible framings. Alternatively, any integer framing of $c$ (representing $\gamma$) can be adjusted to $0$ or $1$ by handle slides of $c$ over its meridian $m$.
\end{proof}

\begin{figure}[ht!]
\labellist
\small\hair 2pt
\pinlabel{$0$} at 425 430
 
\pinlabel{$0$} at 187 190
\pinlabel{$0$} at 207 145

\pinlabel{$0$} at 425 190
\pinlabel{$0$} at 445 145
 
\pinlabel{\textcolor[RGB]{253,141,51}{$c$}} at 190 280
 
\pinlabel{\textcolor[RGB]{56,151,241}{$b$}} at 260 40
\endlabellist
\centering
\includegraphics[width=0.55\textwidth]{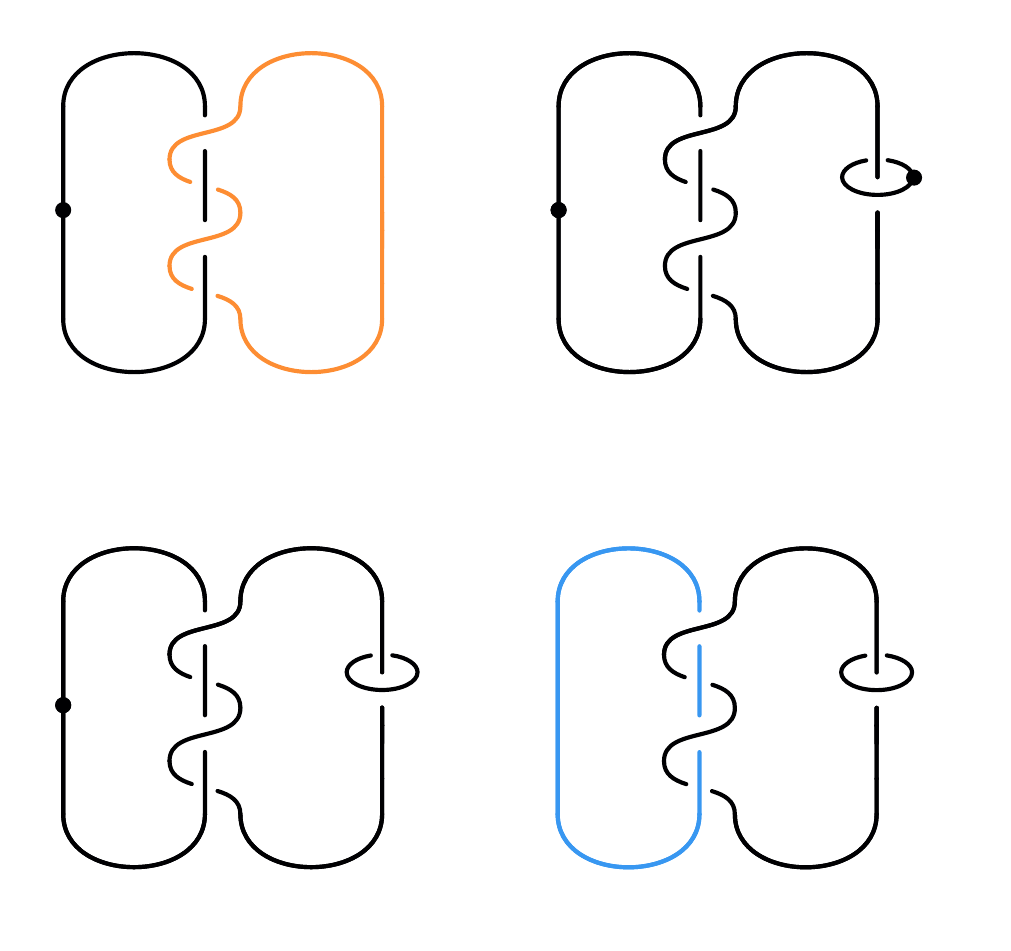}
\caption{\textbf{Top left}: A Kirby diagram $\mathcal{K}$ of $S^1\times S^3$ and an orange circle $c$ representing $2\in\mathbb{Z}\cong \pi_1(S^1\times S^3)$. \textbf{Top right}: A new Kirby diagram of $S^1\times S^3$ obtained by introducing a cancelling $(1,2)$-pair. \textbf{Bottom left}: A Kirby diagram $\mathcal{K}'$ of the $1$-surgery on $S^1\times S^3$ along $\gamma$ representing $2\in\mathbb{Z}\cong \pi_1(S^1\times S^3)$ with the trivial framing. \textbf{Bottom right}: A banded unlink diagram obtained from the bottom left by replacing the dotted circle with the blue circle $b$. This diagram represents a pair $(S^2\times S^2, J)$, where $J$ is a $2$-knot representing $(2,0)\in \mathbb{Z}\oplus\mathbb{Z}\cong H_2(S^2\times S^2)$. The Kirby diagram of the surgery on $S^2\times S^2$ along $J$ is the bottom left by \autoref{pro: kirby diagram for 2-surgeries}.}
\label{fig: Kirby diagram for 1-surgery }
\end{figure}

We now explain how to obtain a Kirby diagram of $2$-surgery on an arbitrary $4$-manifold from a banded unlink diagram.

\begin{proposition}[A Kirby diagram of $2$-surgery]\label{pro: kirby diagram for 2-surgeries}
Let $(\mathcal{K},L,B)$ be a banded unlink diagram of a pair $(Y,J)$, where $Y$ is a $4$-manifold and $J\subset Y$ is a $2$-knot with trivial normal bundle. Then we can obtain a Kirby diagram $\mathcal{K}'$ of the $2$-surgery \[Y(J)=\overline{Y\setminus \nu(J)}\cup (B^3\times S^1)\] on $Y$ along $J$ by following these steps:
    \begin{enumerate}
        \item Start with the banded unlink diagram $(\mathcal{K},L,B)$.
        \item Replace the unlink $L$ with a dotted unlink; see the top of \autoref{fig: Kirby diagram for surface complement}.
        \item Replace the bands $B$ with $0$-framed $2$-handles; see the bottom of \autoref{fig: Kirby diagram for surface complement}
    \end{enumerate} Furthermore, if
$\widehat{M_{\mathcal{K}}}=M_{\mathcal{K}}\cup (\natural^{k}(S^1\times B^3))$, then $\widehat{M_{\mathcal{K}'}}=M_{\mathcal{K}'}\cup (\natural^{k+|L_B|}(S^1\times B^3))$, where $|L_B|$ is the number of components of the result $L_B$ of performing surgery on $L$ along $B$. 
\end{proposition}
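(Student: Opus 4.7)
The plan is to read off a handle decomposition of the neighborhood $\nu(J)$ from the banded unlink diagram and then to compute $Y(J)=\overline{Y\setminus\nu(J)}\cup(B^3\times S^1)$ by inverting the handles of $\nu(J)$ one at a time. The surface $J=D_L\cup B\cup D_{L_B}$ carries a canonical handle decomposition with $|L|$ zero-handles (the disks $D_L$), $|B|$ one-handles (the bands), and $|L_B|$ two-handles (the caps $D_{L_B}$), satisfying $|L|-|B|+|L_B|=2=\chi(S^2)$. Since $J$ has trivial normal bundle, $\nu(J)\cong J\times B^2$ inherits a $4$-dimensional handle decomposition with the same counts at each index, namely $|L|$ $0$-handles of type $B^2\times B^2$, $|B|$ $1$-handles of type $B^1\times B^3$, and $|L_B|$ $2$-handles of type $B^2\times B^2$.

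I would then choose a Morse function on $Y$ compatible with both the handle decomposition of $\widehat{M_\mathcal{K}}$ and that of $\nu(J)$, arranging that $\nu(D_L)$ lies inside the $0$-handle of $M_\mathcal{K}$, that $\nu(B)$ lies at the level of the $2$-handles of $M_\mathcal{K}$, and that $\nu(D_{L_B})$ lies in the cap $\natural^k(S^1\times B^3)$. Steps $(2)$ and $(3)$ of the algorithm then describe the effect of carving the ribbon half-surface $F=D_L\cup B$ from $M_\mathcal{K}$, yielding $\overline{M_\mathcal{K}\setminus\nu(F)}\cong M_{\mathcal{K}'}$; this is the direct generalization of Gompf--Stipsicz's algorithm for ribbon-disk complements in $B^4$ to an arbitrary $4$-manifold background. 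Step $(2)$ is the standard Kirby-calculus interpretation of dotted circles: removing $\nu(D_{L,i})\cong B^2\times B^2$ from the $0$-handle of $M_\mathcal{K}$ is precisely the operation of carving a properly embedded trivial disk bounded by $L_i\subset S^3\setminus\mathcal{K}$, recorded by dotting $L_i$. Step $(3)$ handles the $4$-dimensional $1$-handles $\nu(B_j)\cong B^1\times B^3$ via the local identification $\overline{B^4\setminus(B^1\times B^3)}\cong S^2\times B^2$, which shows that carving a proper $4$-dimensional $1$-handle from $B^4$ is equivalent to attaching a $0$-framed $2$-handle along the meridian of the core of that $1$-handle; applied locally near each band this produces the $0$-framed $2$-handle in the diagram, the framing being $0$ because the band has trivial normal bundle.

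Finally, I would establish the ``furthermore'' assertion by combining the top of the surface with the glued-in $B^3\times S^1=S^1\times B^3$. Carving the $|L_B|$ trivial disks $\nu(D_{L_B})$ from $\natural^k(S^1\times B^3)$ produces $\natural^{k+|L_B|}(S^1\times B^3)$ (each trivial disk carving adds one $S^1$-factor, exactly as in step $(2)$), and the subsequent gluing of $B^3\times S^1$ along the $S^2\times S^1$ boundary component introduced by $\nu(J)$ closes off that component, leaving the cap of $Y(J)$ equal to $\natural^{k+|L_B|}(S^1\times B^3)$. This may be verified directly by a handle count or, more cleanly, by an Euler-characteristic bookkeeping using $|L|-|B|+|L_B|=2$: the proposed diagram $\widehat{M_{\mathcal{K}'}}$ has Euler characteristic $\chi(Y)-2=\chi(Y(J))$, confirming consistency. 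The main obstacle I anticipate is the local verification in step $(3)$ that carving the $1$-handle $\nu(B_j)$ produces precisely a $0$-framed $2$-handle along the meridian of the core; I would address this by the explicit local identification $\overline{B^4\setminus(B^1\times B^3)}\cong S^2\times B^2$ described above, and then assemble the local pictures into the global diagram $\mathcal{K}'$ using the chosen Morse function on $Y$.
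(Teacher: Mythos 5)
Your proposal follows essentially the same approach as the paper's proof: both rest on the correspondence between the $i$-handles of the ribbon surface and $(i+1)$-handles of the complement (the paper cites Chapter 6.2 of Gompf--Stipsicz for this; you supply the local models $\overline{B^4\setminus(B^2\times B^2)}\cong S^1\times B^3$ and $\overline{B^4\setminus(B^1\times B^3)}\cong S^2\times B^2$ directly), and both then assemble the global diagram $\mathcal{K}'$ and account for the $3$- and $4$-handles in the cap.

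The one place where your proposal is noticeably looser than the paper is the ``furthermore'' accounting. You assert that carving the $|L_B|$ cap disks from $\natural^{k}(S^1\times B^3)$ gives $\natural^{k+|L_B|}(S^1\times B^3)$ and that gluing $B^3\times S^1$ ``closes off that component, leaving the cap of $Y(J)$ equal to $\natural^{k+|L_B|}(S^1\times B^3)$.'' But after carving the caps, the carved cap by itself has no $S^2\times S^1$ boundary component; that boundary sphere-times-circle only appears globally after also carving $\nu(F)$ from $M_\mathcal{K}$, and $B^3\times S^1$ is then glued along $(F\cup D_{L_B})\times S^1$, partly to $\partial M_{\mathcal{K}'}$ and partly to the carved cap. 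So ``(carved cap)$\,\cup\,(B^3\times S^1)$'' is not a stand-alone manifold that you can compute first and attach to $M_{\mathcal{K}'}$ afterwards. The paper handles this by first forming $\overline{\widehat{M_{\mathcal{K}}}\setminus\nu(\widehat{S})}$, noting that the cap disks contribute $3$-handles with one cancelled against the $4$-handle, and then viewing $B^3\times S^1$ relative to its $S^2\times S^1$ boundary as a single $3$-handle plus a $4$-handle, giving the correct total of $k+|L_B|$ $3$-handles and one $4$-handle. Your Euler-characteristic cross-check is sound but only verifies a rank, not the diffeomorphism claim; to close the gap you should replace the cap decomposition sentence with the explicit $3$-/$4$-handle count (or with an appeal to the uniqueness of capping off via Laudenbach--Po\'enaru, which the paper also uses).
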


\begin{proof}
Let $(\mathcal{K},L,B)$ be a banded unlink diagram such that \[(Y,J)\cong(\widehat{M_{\mathcal{K}}},\widehat{S(\mathcal{K},L,B)}),\] where $\mathcal{K}=L_1\cup L_2\subset S^3$ and $\partial M_{\mathcal{K}}\cong \#^k(S^1\times S^2)$. We may assume that \[\widehat{M_{\mathcal{K}}}=M_{\mathcal{K}}\cup_N(\text{$k$ $3$-handles})\cup\text{$4$-handle},\] where the $3$-handles are attached along \[N=\coprod^k(\{x_0\}\times S^2)\subset\#^k(S^1\times S^2)\cong\partial M_{\mathcal{K}},\] and  the $4$-handle is then attached to the resulting $S^3$ boundary. Let $\widehat{M_{\mathcal{K}}}^\circ$ be $\widehat{M_{\mathcal{K}}}$ with the $4$-handle removed, i.e., \[\widehat{M_{\mathcal{K}}}^\circ=\widehat{M_{\mathcal{K}}}\setminus\text{$4$-handle}.\]

We have \[\widehat{S(\mathcal{K},L,B)}=S(\mathcal{K},L,B)\cup D_{L_B},\] where $S(\mathcal{K},L,B)$ is properly embedded in $ \widehat{M_{\mathcal{K}}}^\circ$, and $D_{L_B}$ is the collection of the trivial disk bounded by the link $L_B$ in the boundary $3$-sphere; see \autoref{def: realizing surface} and \autoref{rem: realizing surface properties}.

The complement \[\overline{\widehat{M_{\mathcal{K}}}^\circ\setminus\nu(S(\mathcal{K},L,B))}\] is obtained from \[\overline{B^4\setminus\nu(S(\mathcal{K},L,B))}\] by carving out a regular neighborhood of the collection of the properly embedded trivial disks $D_{L_1}'\subset B^4$ bounded by the dotted unlink $L_1$, then attaching $2$-handles along the framed link $L_2$ and $k$ $3$-handles along $N$. Note that $S(\mathcal{K},L,B)$ can be embedded in each of $B^4,M_{\mathcal{K}}$, and $\widehat{M_{\mathcal{K}}}^\circ$ by the construction of $S(\mathcal{K},L,B)$. 

By Chapter $6.2$ in \cite{gompf20234}, we can construct a Kirby diagram of \[\overline{B^4\setminus \nu(S(\mathcal{K},L,B))}.\] The key idea is that an $i$-handle of $S(\mathcal{K},L,B)$ induces an $(i+1)$-handle in the handle decomposition of the complement $\overline{B^4\setminus \nu(S(\mathcal{K},L,B))}$. The attaching sphere of the $(i+1)$-handle is $\partial(C\times B^1)$, where $C$ is the core of the $i$-handle of $S(\mathcal{K},L,B)$. Thus, the unlink $L$ and bands $B$ induce the dotted unlink and the $0$-framed $2$-handles, respectively; see \autoref{fig: Kirby diagram for surface complement}. 

Similarly, the complement  \[\overline{\widehat{M_{\mathcal{K}}}\setminus\nu(\widehat{S(\mathcal{K},L,B)})}\] is obtained from \[\overline{\widehat{M_{\mathcal{K}}}^\circ\setminus\nu(S(\mathcal{K},L,B))}\] by attaching $3$-handles along $(|L_B|-1)$ $2$-spheres among $|L_B|$ $2$-spheres $\partial(\nu(D_{L_B}))$, the boundary of the thickenings of the $2$-handles of $\widehat{S(\mathcal{K},L,B)}$. 
That is, $3$-handles are first attached along $\partial(\nu(D_{L_B}))$, followed by a $4$-handle, with one of the $3$-handles being cancelled by the $4$-handle.

The $2$-surgery \[Y(J)=\overline{Y\setminus\nu(J)}\cup (B^3\times S^1)\cong\overline{\widehat{M_{\mathcal{K}}}\setminus\nu(\widehat{S(\mathcal{K},L,B)})}\cup(B^3\times S^1)\] is thus obtained from \[\overline{\widehat{M_{\mathcal{K}}}\setminus\nu(\widehat{S(\mathcal{K},L,B)})}\] by attaching a $3$-handle and a $4$-handle.

Let $\mathcal{K}'$ be the Kirby diagram obtained from $(\mathcal{K},L,B)$ by replacing the unlink $L$ with a dotted unlink and replacing bands $B$ with $0$-framed $2$-handles. Then clearly \[Y(J)\cong M_{\mathcal{K}'}\cup(\natural^{k+|L_B|}(S^1\times B^3)).\] We note that by \cite{laudenbach1972note}, there exists a unique way, up to diffeomorphism, to attach $(k+|L_B|)$ $3$-handles and a $4$-handle to $M_{\mathcal{K}'}$. Therefore, $\mathcal{K}'$ is a Kirby diagram of $Y(J)$.
\end{proof}

\begin{figure}[ht!]
\labellist
\small\hair 2pt
\pinlabel{$0$} at 340 90
\endlabellist
\centering
\includegraphics[width=0.5\textwidth]{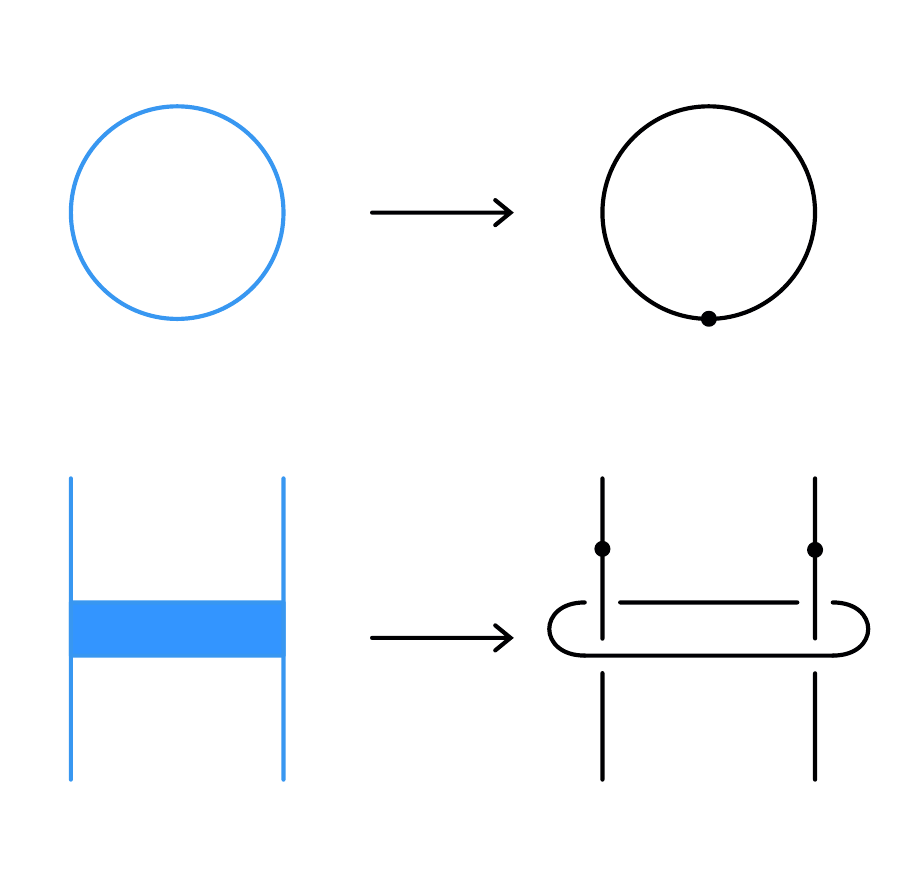}
\caption{An algorithm for constructing a Kirby diagram of $2$-surgery from a banded unlink diagram.}
\label{fig: Kirby diagram for surface complement}
\end{figure}

\begin{example}\hfill
    \begin{enumerate}
        \item The middle of \autoref{fig: cobordism from S^4 to non-simply connected homology sphere} is a Kirby diagram of the surgery $\Sigma(\alpha)$ on $\Sigma$ along $\alpha$. In this diagram, the $(k+|L_B|)$ $3$-handles are not shown, where  $k=0$ and $|L_B|=1$. After removing a cancelling $(1,2)$-pair and a cancelling $(2,3)$-pair, we see that the Kirby diagram represents $S^4$.
        \item The right of \autoref{fig: cobordism from S^4 to non-simply connected homology sphere} is a Kirby diagram of the surgery $\Sigma(\beta)$ on $\Sigma$ along $\beta$. Again, $(k+|L_B|)$ $3$-handles are omitted, where $k=0$ and $|L_B|=3$. This Kirby diagram represents a non-simply connected homology $4$-sphere \cite{kim2025nknotssntimess2contractible}.
        \item If we interpret the top left of \autoref{fig: Kirby diagram for surface complement} as a banded unlink diagram of the unknotted $2$-sphere $U$ in $S^4$, then the top right of the same figure gives a Kirby diagram of the surgery on $S^4$ along $U$, which is diffeomorphic to $S^1\times S^3$.
        \item In the left of \autoref{fig: nontrivial s2bundle over s2 with a fiber}, we obtain a Kirby diagram of the surgery on $S^2\tilde{\times}S^2$ along a fiber by replacing the red circle with a dotted circle. The resulting manifold is diffeomorphic to $S^4$, as shown by removing a cancelling $(1,2)$-pair and a cancelling $(2,3)$-pair. A similar argument applies to the right of \autoref{fig: nontrivial s2bundle over s2 with a fiber}, where surgery on $\mathbb{C}P^2\#\overline{\mathbb{C}P^2}$ along $\mathbb{C}P^1\#\overline{\mathbb{C}P^1}$ also yields $S^4$.
    \end{enumerate}
\end{example}

\section{Heegaard diagrams for 5-manifolds}\label{sec:Heegaard diagrams for 5-manifolds}

We recall that a Heegaard diagram $(\Sigma,\alpha,\beta)$ is a triple, where $\Sigma$ is a closed $4$-manifold and each of $\alpha$ and $\beta$ is a $2$-link with trivial normal bundle (see \autoref{def:Heegaard diagrams}). We can construct a $5$-dimensional cobordism $M_\alpha \cup_\Sigma M_\beta$ from $\Sigma(\alpha)$ to $\Sigma(\beta)$ using only $2$- and $3$-handles, where $\Sigma(\alpha)$ and $\Sigma(\beta)$ are the results of $2$-surgery on $\Sigma$ along $\alpha$ and $\beta$, respectively (see \autoref{def: (2k+1)-dimensional cobordism}). In this construction, $\alpha$ can be regarded as the belt spheres of the $2$-handles, and $\beta$ as the attaching spheres of the $3$-handles.

If $\Sigma(\alpha) \cong \#^k(S^1 \times S^3)$, then we can construct a $5$-dimensional $3$-handlebody $\widehat{M_{\alpha}} \cup_\Sigma M_\beta$ by capping off $\Sigma(\alpha)$ with $\natural^k(S^1 \times B^4)$, where $\natural^k(S^1 \times B^4)$ is considered as the union of a single $0$-handle and $k$ $1$-handles (see \autoref{def: some 5-manifolds from Heegaard diagrams}). The manifold $\widehat{M_\alpha}$ is a $5$-dimensional $2$-handlebody, and there exists a $4$-dimensional $2$-handlebody $Y$ such that $\widehat{M_\alpha} \cong Y \times B^1$ (see \autoref{thm: Heegaard diagrams for 5d 2handlebodies}). Clearly, $\Sigma$ is the double of $Y$. If $\Sigma(\beta) \cong \#^r(S^1 \times S^3)$, then we can also cap off along $\Sigma(\beta)$ with $\natural^r(S^1 \times B^4)$ to obtain a closed $5$-manifold $\widehat{M_\alpha} \cup_\Sigma \widehat{M_\beta}$, where $\natural^r(S^1 \times B^4)$ is considered as the union of $r$ $4$-handles and a single $5$-handle (see \autoref{def: some 5-manifolds from Heegaard diagrams}).

We now show that every $5$-dimensional cobordism with $2$- and $3$-handles, every $5$-dimensional $3$-handlebody, and every closed, connected, orientable $5$-manifold admits a Heegaard diagram.

\Heegaardexistence

\begin{proof}
Let $X$ be a $5$-dimensional cobordism with $2$- and $3$-handles, a $5$-dimensional $3$-handlebody, or closed $5$-manifold. Let $f:X\rightarrow \mathbb{R}$ be a self-indexing Morse function, where $f(\partial_{-}X)=-\frac{1}{2}$ and $f(\partial_{+}X)=\frac{11}{2}$. Define $A$ as the union of the ascending manifolds of the critical points of index $2$, and $B$ as the union of the descending manifolds of the critical points of index $3$. Define the triple: \[(\Sigma,\alpha,\beta)=(f^{-1}(\frac{5}{2}), f^{-1}(\frac{5}{2})\cap A,f^{-1}(\frac{5}{2})\cap B).\]
We consider the following three cases:
\begin{enumerate}
\item Let $X$ be a $5$-dimensional cobordism with only $2$- and $3$-handles. Clearly, $f^{-1}((-\infty,\frac{5}{2}])\cong M_\alpha$ and $f^{-1}([\frac{5}{2},\infty))\cong M_\beta$. Therefore, $(\Sigma,\alpha,\beta)$ is a Heegaard diagram of \[X=f^{-1}((-\infty,\infty))=f^{-1}((-\infty,\frac{5}{2}])\cup f^{-1}([\frac{5}{2},\infty))\cong M_\alpha\cup_\Sigma M_\beta.\]
\item Let $X$ be a $5$-dimensional $3$-handlebody. The sublevel set $f^{-1}((-\infty,\frac{5}{2}])$ decomposes as \[f^{-1}((-\infty,\frac{5}{2}])=f^{-1}((-\infty,\frac{3}{2}])\cup f^{-1}([\frac{3}{2},\frac{5}{2}]),\] where $f^{-1}([\frac{3}{2},\frac{5}{2}])\cong M_\alpha$ and $f^{-1}((-\infty,\frac{3}{2}])\cong\natural^k(S^1\times B^4)$ for some $k\geq0$. By \cite{cavicchioli1993determination}, $\widehat{M_\alpha}$ is diffeomorphic to $f^{-1}((-\infty,\frac{5}{2}])$. Therefore, $(\Sigma,\alpha,\beta)$ is a Heegaard diagram of \[X=f^{-1}((-\infty,\infty))=f^{-1}((-\infty,\frac{5}{2}])\cup f^{-1}([\frac{5}{2},\infty))\cong \widehat{M_\alpha}\cup_\Sigma M_\beta.\] 
\item Let $X$ be a closed $5$-manifold. Similar to the argument in (2), we have $f^{-1}((-\infty,\frac{5}{2}])\cong\widehat{M_\alpha}$ and $f^{-1}([\frac{5}{2},\infty))\cong\widehat{M_\beta}$. Therefore, $(\Sigma,\alpha,\beta)$ is a Heegaard diagram of \[X=f^{-1}((-\infty,\infty))=f^{-1}((-\infty,\frac{5}{2}])\cup f^{-1}([\frac{5}{2},\infty))\cong  \widehat{M_\alpha}\cup_\Sigma \widehat{M_\beta}.\] 
\end{enumerate}
\end{proof}

We introduce several moves such as isotopies, handle slides, stabilizations, and diffeomorphisms defined on Heegaard diagrams. We begin with isotopies.

\begin{definition}\label{def: isotopy on Heegaard diagrams}
Let $(\Sigma,\alpha,\beta)$ and $(\Sigma,\alpha',\beta')$ be Heegaard diagrams. We say that they are related by an \textit{isotopy} if $\alpha$ is isotopic to $\alpha'$ and $\beta$ is isotopic to $\beta'.$
\end{definition}

Next, we introduce handle slides.

\begin{definition}\label{def: handle slides on Heegaard diagrams}
Let $(\Sigma,\alpha,\beta)$ be a Heegaard diagram. Let $\alpha_i, \alpha_j\subset \alpha=\alpha_1\cup\cdots\cup\alpha_m$ be two $2$-knots and $\tilde{\alpha_j}\subset \partial\nu(\alpha_j)$ be a push-off of $\alpha_j.$ A $3$-dimensional submanifold $c\subset \Sigma$ is called a \textit{sliding cylinder connecting} $\alpha_i$ and $\tilde{\alpha_j}$ if there exists an embedding $e:B^1\times B^2\hookrightarrow \Sigma$ such that 
    \begin{enumerate}
        \item $c=e(B^1\times B^2)$,
        \item $c\cap \alpha_i=e(\{-1\}\times B^2)$,
        \item $c\cap \tilde{\alpha_j}=e(\{1\}\times B^2)$,
        \item $e((-1,1)\times B^2)\cap (\alpha\cup \nu(\alpha_j))=\emptyset$.
    \end{enumerate}
We define the cylinder sum as\[\alpha_i\#_c \tilde{\alpha_j}=(\alpha_1\cup \alpha_2)\setminus e(\partial B^1\times B^2)\cup e(B^1\times\partial B^2).\] We call $\alpha_i\#_c\tilde{\alpha_j}$ a \textit{handle slide of $\alpha_i$ over $\alpha_j$} (along $c$).
We say that $(\Sigma,\alpha)$ and $(\Sigma,\alpha')$ are related by a \textit{handle slide} if $\alpha'=(\alpha\setminus \alpha_i) \cup (\alpha_i\#_c \tilde{\alpha_j})$. We say that two Heegaard diagrams $(\Sigma,\alpha,\beta)$ and $(\Sigma,\alpha',\beta')$ are related by a \textit{handle slide} if $(\Sigma,\alpha)$ and $(\Sigma,\alpha')$ are related by a handle slide and $\beta=\beta'$ or $(\Sigma,\beta)$ and $(\Sigma,\beta')$ are related by a handle slide and $\alpha=\alpha'.$ See \autoref{fig: Handle slide on Heeggar diagram}.
\end{definition}

\begin{figure}[ht!]
\labellist
\small\hair 2pt
\pinlabel{$0$} at 425 1530
\pinlabel{$0$} at 310 1630

\pinlabel{$0$} at 425 940
\pinlabel{$0$} at 310 1040

\pinlabel{$0$} at 425 350
\pinlabel{$0$} at 310 450 
\endlabellist
\centering
\includegraphics[width=0.73\textwidth]{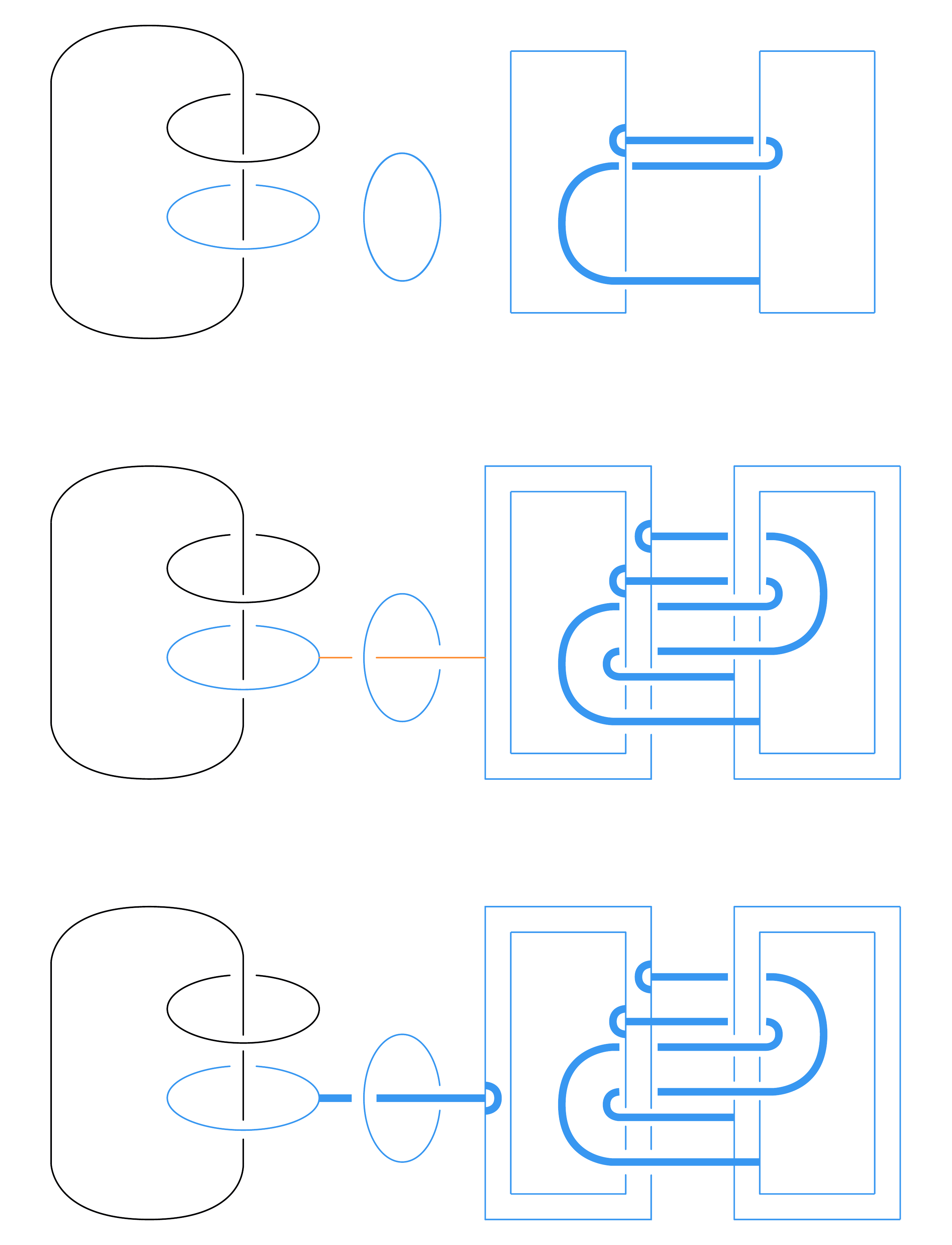}
\caption{From \textbf{top} to \textbf{bottom}: A handle slide of $S^2\times\{y_0\}\subset S^2\times S^2$ over the spun trefoil along a sliding cylinder whose core is the orange arc. The orange arc is the core of a sliding cylinder (a $3$-dimensional $1$-handle) connecting $S^2\times\{y_0\}$ and a parallel push-off of the spun trefoil. For the given orange arc, there are two possible sliding cylinders whose core is the orange arc; see \autoref{fig: Banded unlink diagram for surgery along 3d1h} for the banded unlink diagram of the surgery along the cylinder.} 
\label{fig: Handle slide on Heeggar diagram}
\end{figure}

\begin{figure}[ht!]
\centering
\includegraphics[width=0.55\textwidth]{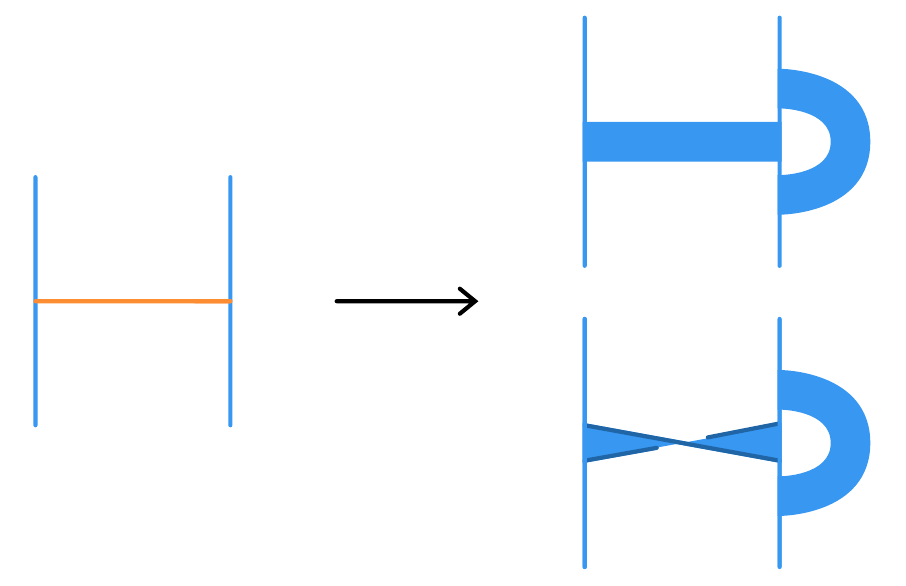}
\caption{\textbf{Left}: An orange arc connecting two different components. \textbf{Right}: There are two possible banded unlink diagrams of the surgery along a sliding cylinder whose core is the orange arc. Surgery along $B^1\times B^2$ is to remove $\{-1,1\}\times B^2$ and glue $B^1\times S^1$, where $B^1\times\{0\}$ is identified with the orange arc. Here, $B^1\times S^1=B^1\times (B^1_{-}\cup B^1_{+})=(B^1\times B^1_{-})\cup (B^1\times B^1_{+})$, where $B^1\times B^1_{-}$ and $B^1\times B^1_{+}$ correspond to a long band and a rainbow band, respectively. If the long band is twisted, we can untwist the long band by sliding it over the rainbow band.}
\label{fig: Banded unlink diagram for surgery along 3d1h}
\end{figure}

We now introduce three types of stabilizations.

\begin{definition}\label{def: stabilizations on Heegaard diagrams}Let $(\Sigma,\alpha,\beta)$ be a Heegaard diagram.
    \begin{enumerate}
        \item  A \textit{first stabilization} of $(\Sigma,\alpha,\beta)$ is the Heegaard diagram \[(\Sigma,\alpha',\beta)=(\Sigma,\alpha\cup U,\beta),\] where $U$ is the trivial $2$-knot in a $4$-ball $B\subset \Sigma$ such that $B\cap \alpha=\emptyset$. We say that $(\Sigma,\alpha,\beta)$ and $(\Sigma,\alpha',\beta)$ are related by a \textit{first stabilization}. See the left of \autoref{fig: stabilizations on a Heegaard diagram}.
        \item A \textit{second stabilization} of $(\Sigma,\alpha,\beta)$ is the Heegaard diagram \[(\Sigma',\alpha',\beta')=(\Sigma\# (S^2\times S^2),\alpha \cup (\{x_0\}\times S^2),\beta\cup (S^2\times\{y_0\}))\] obtained by performing the connected sum of $(\Sigma,\alpha,\beta)$ with $(S^2\times S^2,\{x_0\}\times S^2,S^2\times\{y_0\})$, where $x_0,y_0 \in S^2.$ We say that $(\Sigma,\alpha,\beta)$ and $(\Sigma',\alpha',\beta')$ are related by a \textit{second stabilization}. See the middle of \autoref{fig: stabilizations on a Heegaard diagram}.
        \item A \textit{third stabilization} of $(\Sigma,\alpha,\beta)$ is the Heegaard diagram \[(\Sigma,\alpha,\beta')=(\Sigma,\alpha,\beta \cup U),\] where $U$ is the trivial $2$-knot in a $4$-ball $B\subset \Sigma$ such that $B\cap \beta=\emptyset$. We say that $(\Sigma,\alpha,\beta)$ and $(\Sigma,\alpha,\beta')$ are related by a \textit{third stabilization}. See the right of \autoref{fig: stabilizations on a Heegaard diagram}.
    \end{enumerate}     
\end{definition}

\begin{figure}[ht!]
\labellist
\small\hair 2pt
\pinlabel{$0$} at 170 140
\pinlabel{$0$} at 340 140
\endlabellist
\centering
\includegraphics[width=0.65\textwidth]{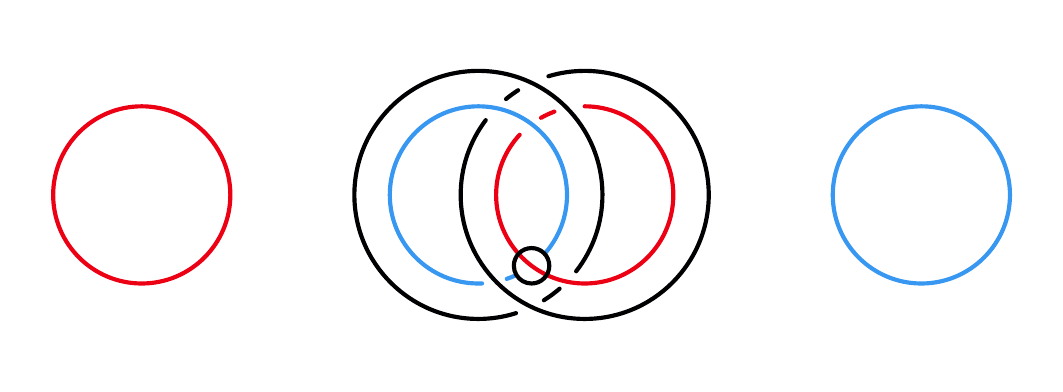}
\caption{Three types of stabilizations. \textbf{Left}: A first stabilization. \textbf{Middle}: A second stabilization. \textbf{Right}: A third stabilization.}
\label{fig: stabilizations on a Heegaard diagram}
\end{figure}

\begin{remark}
We note that for a first stabilization, we do not draw a $4$-handle that is cancelled by the $3$-handle attached along the trivial $2$-knot $U$, i.e., $M_\alpha\cong M_{\alpha'}\; \cup $  $4$-handle. More precisely, the $4$-handle is attached along the obvious $3$-sphere $\{x_0\}\times S^3\subset \Sigma(\alpha)\#(S^1\times S^3)\cong \Sigma(\alpha')$, where $x_0\in S^1$. Similarly, for a third stabilization, we omit the $4$-handle that cancels the corresponding $3$-handle attached along the trivial $2$-knot $U$.
\end{remark}

\begin{remark}
The definitions of isotopy, handle slide, and (first, second, and third) stabilization defined on Heegaard diagrams correspond to the original definitions of isotopy of a handle, handle slide, and cancelling $(1,2)$-, $(2,3)$-, and $(3,4)$- pairs in dimension $5$.
\end{remark}

Finally, we introduce diffeomorphisms of Heegaard diagrams.

\begin{definition}\label{def: diffeomorphisms on Heggaard diagrams}
Let $(\Sigma,\alpha,\beta)$ and $(\Sigma',\alpha',\beta')$ be Heegaard diagrams. We say that they are related by a \textit{diffeomorphism} if there exists a diffeomorphism $\phi:\Sigma \rightarrow \Sigma'$ sending $\alpha$ to $\alpha'$ and $\beta$ to $\beta'$.
\end{definition}

A Heegaard diagram is unique up to the moves defined above.

\Heegaardmoves

\begin{proof}
Let $(\Sigma,\alpha,\beta)$ and $(\Sigma',\alpha',\beta')$ be Heegaard diagrams. We will prove parts $(1)$, $(2)$, and $(3)$ using similar arguments. The ``only if" direction in each part follows from \autoref{thm: Cerf's moves} \cite{cerf1970stratification}. For the ``if" direction, we again apply Cerf's theorem and use the fact that the attachment of a $5$-dimensional $3$-handle is determined by its attaching $2$-sphere since $\pi_2(SO(2))\cong 1$. In parts $(2)$ and $(3)$, we also use the result of Cavicchioli and Hegenbarth that every self-diffeomorphism of $\#^k(S^1\times S^3)$ extends to a self-diffeomorphism of $\natural^k(S^1\times B^4)$ \cite{cavicchioli1993determination}.
    \begin{enumerate}
        \item $(\Rightarrow)$ Suppose $M_\alpha\cup_\Sigma M_\beta\cong M_{\alpha'}\cup_{\Sigma'} M_{\beta'}$. Let $\Phi:M_\alpha\cup_\Sigma M_\beta\rightarrow M_{\alpha'}\cup_{\Sigma'} M_{\beta'}$ be a diffeomorphism. Then $(\Phi(\Sigma),\Phi(\alpha),\Phi(\beta))$ is a Heegaard diagram of $M_{\alpha'}\cup_{\Sigma'} M_{\beta'}$. By \cite{cerf1970stratification}, it is related to $(\Sigma',\alpha',\beta')$ by isotopies, handle slides, and (first, second, and third) stabilizations. Therefore, the diagrams $(\Sigma,\alpha,\beta)$ and $(\Sigma',\alpha',\beta')$ are related by isotopies, handle slides, (first, second, and third) stabilizations, and diffeomorphisms.\\ 
        $(\Leftarrow)$ Suppose $(\Sigma,\alpha,\beta)$ and $(\Sigma',\alpha',\beta')$ are related by isotopies, handle slides, and (first, second, and third) stabilizations. Then $M_\alpha\cup_\Sigma M_\beta\cong M_{\alpha'}\cup_{\Sigma'} M_{\beta'}$ by \cite{cerf1970stratification}. Now suppose they are related by a diffeomorphism $\phi:(\Sigma,\alpha,\beta)\rightarrow (\Sigma',\alpha',\beta')$ with $\phi(\alpha)=\alpha'$ and $\phi(\beta)=\beta'$. Then $\phi$ extends to a diffeomorphism $\Phi:\Sigma\times[-1,1]\rightarrow \Sigma'\times[-1,1]$ defined by $\Phi(x,t)=(\phi(x),t)$. This map satisfies $\Phi(\alpha\times\{-1\})=\alpha'\times\{-1\}$ and $\Phi(\beta\times\{1\})=\beta'\times\{1\}$. Since the attaching map of a $5$-dimensional $3$-handle is determined by its attaching $2$-sphere (as $\pi_2(SO(2))=1$), $\Phi$ extends to a diffeomorphism $\tilde{\Phi}: M_\alpha\cup_\Sigma M_\beta\rightarrow M_{\alpha'}\cup_{\Sigma'} M_{\beta'}$. Therefore, $M_{\alpha}\cup_\Sigma M_{\beta}\cong M_{\alpha'}\cup_{\Sigma'} M_{\beta'}$.
        \item $(\Rightarrow)$ Suppose $\widehat{M_\alpha}\cup_\Sigma M_\beta\cong \widehat{M_{\alpha'}}\cup_{\Sigma'} M_{\beta'}$. Let $\Phi:\widehat{M_\alpha}\cup_\Sigma M_\beta\rightarrow \widehat{M_{\alpha'}}\cup_{\Sigma'} M_{\beta'}$ be a diffeomorphism. Then $(\Phi(\Sigma),\Phi(\alpha),\Phi(\beta))$ is a Heegaard diagram of $\widehat{M_{\alpha'}}\cup_{\Sigma'} M_{\beta'}$. As in part $(1)$, the diagrams $(\Sigma,\alpha,\beta)$ and $(\Sigma',\alpha',\beta')$ are related by isotopies, handle slides, (first, second, and third) stabilizations, and diffeomorphisms.\\ 
        $(\Leftarrow)$ Suppose $(\Sigma,\alpha,\beta)$ and $(\Sigma',\alpha',\beta')$ are related by isotopies, handle slides, (first, second, and third) stabilizations, and diffeomorphisms. As in part $(1)$, we have $\widehat{M_\alpha}\cup_\Sigma M_\beta\cong \widehat{M_{\alpha'}}\cup_{\Sigma'} M_{\beta'}$. Note that the original cobordisms $M_\alpha\cup_\Sigma M_\beta$ and $M_{\alpha'}\cup_{\Sigma'} M_{\beta'}$ may not be diffeomorphic because $\Sigma(\alpha)\cong\#^k(S^1\times S^3)$ and $\Sigma'(\alpha')\cong\#^{k'}(S^1\times S^3)$ are not diffeomorphic when $k\neq k'$. However, each boundary component $\Sigma(\alpha)$ and $\Sigma'(\alpha')$ can be capped off uniquely up to diffeomorphism by \cite{cavicchioli1993determination}, so the capped off cobordisms are diffeomorphic, i.e., $\widehat{M_\alpha}\cup_\Sigma M_\beta\cong \widehat{M_{\alpha'}}\cup_{\Sigma'} M_{\beta'}$.
        \item $(\Rightarrow)$ Suppose $\widehat{M_\alpha}\cup_\Sigma \widehat{M_\beta}\cong \widehat{M_{\alpha'}}\cup_{\Sigma'} \widehat{M_{\beta'}}$. Let $\Phi:\widehat{M_\alpha}\cup_\Sigma \widehat{M_\beta}\rightarrow \widehat{M_{\alpha'}}\cup_{\Sigma'} \widehat{M_{\beta'}}$ be a diffeomorphism. Then $(\Phi(\Sigma),\Phi(\alpha),\Phi(\beta))$ is a Heegaard diagram of $\widehat{M_{\alpha'}}\cup_{\Sigma'} \widehat{M_{\beta'}}$. As in parts $(1)$ and $(2)$, the diagrams are related by isotopies, handle slides, (first, second, and third) stabilizations, and diffeomorphisms.\\ 
        $(\Leftarrow)$ Suppose $(\Sigma,\alpha,\beta)$ and $(\Sigma',\alpha',\beta')$ are related by isotopies, handle slides, and (first, second, and third) stabilizations. As in part $(2)$, the cobordisms $M_\alpha\cup_\Sigma M_\beta$ and $M_{\alpha'}\cup_{\Sigma'} M_{\beta'}$ may not be diffeomorphic, but after capping off the boundary components, we obtain $\widehat{M_\alpha}\cup_\Sigma \widehat{M_\beta}\cong \widehat{M_{\alpha'}}\cup_{\Sigma'} \widehat{M_{\beta'}}$.
    \end{enumerate}
\end{proof}

\begin{remark} We recall \autoref{cor: Heegaard 4-manifolds can be used to distinguish 5-manifolds}, which follows immediately from the theorem above. This highlights that the Heegaard $4$-manifold $\Sigma$ can be used to distinguish $5$-manifolds, in contrast to the classical Heegaard surface, which cannot distinguish $3$-manifolds.
\end{remark}

The following corollary shows that the fundamental group of a $5$-manifold can be computed directly from its Heegaard diagrams.

\begin{corollary}
Let $(\Sigma,\alpha,\beta)$ be a Heegaard diagram of a $5$-dimensional $3$-handlebody or a closed $5$-manifold $X$. Then $\pi_1(X)\cong\pi_1(\Sigma)$.
\end{corollary}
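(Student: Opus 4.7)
The plan is to factor the comparison $\pi_1(X) \cong \pi_1(\Sigma)$ through the $2$-handlebody $\widehat{M_\alpha}$ and an auxiliary $4$-dimensional handlebody. First I would observe that in both cases of the corollary, $X$ is built from $\widehat{M_\alpha}$ by attaching only handles of index at least three: namely the $|\beta|$ three-handles of $M_\beta$, and in the closed case additionally the four-handles and five-handle coming from $\widehat{M_\beta}$. The attaching sphere of a $5$-dimensional $k$-handle with $k \geq 3$ is $S^{k-1}$, which is simply connected, so van Kampen's theorem gives $\pi_1(X) \cong \pi_1(\widehat{M_\alpha})$.

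Next I would invoke \autoref{thm: Heegaard diagrams for 5d 2handlebodies}, which produces a $4$-dimensional $2$-handlebody $Y$ with $\widehat{M_\alpha} \cong Y \times B^1$. Consequently $\pi_1(\widehat{M_\alpha}) \cong \pi_1(Y)$, and since $\Sigma = \partial \widehat{M_\alpha} = \partial(Y \times B^1)$, the Heegaard $4$-manifold $\Sigma$ is the double $DY = Y \cup_{\partial Y} \overline{Y}$.

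The crux is then the identification $\pi_1(DY) \cong \pi_1(Y)$. I would build $DY$ from $Y$ by attaching the dual handle decomposition of $\overline{Y}$: since $Y$ has only handles of index at most two, these dual handles are two-handles (one for each two-handle of $Y$), three-handles (one for each one-handle), and a single four-handle (dual to the zero-handle). The dual three-handles and four-handle again contribute nothing to $\pi_1$ by van Kampen, as their attaching spheres $S^2$ and $S^3$ are simply connected. Each dual two-handle is attached along the belt circle of a two-handle of $Y$, and such a belt circle bounds the cocore disk of the corresponding two-handle inside $Y$; hence it is nullhomotopic in $Y$, and attaching a two-handle along a nullhomotopic curve quotients $\pi_1$ by the trivial element and thus leaves $\pi_1$ unchanged. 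Stringing together the isomorphisms yields $\pi_1(X) \cong \pi_1(\widehat{M_\alpha}) \cong \pi_1(Y) \cong \pi_1(DY) \cong \pi_1(\Sigma)$. The only non-automatic ingredient is the observation that the belt circles of the two-handles of $Y$ are nullhomotopic via their cocore disks, which is the main (and rather mild) obstacle.
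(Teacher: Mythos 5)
Your proposal is correct and follows essentially the same route as the paper: pass to $\widehat{M_\alpha}$, invoke \autoref{thm: Heegaard diagrams for 5d 2handlebodies} to write $\widehat{M_\alpha}\cong Y\times B^1$ with $\Sigma\cong DY$, and then use $\pi_1(DY)\cong\pi_1(Y)$. The paper states both van Kampen steps tersely (``determined by its $2$-handlebody'' and ``by the construction of the double''), whereas you spell out the justifications, but the argument is the same.
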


\begin{proof}
    The fundamental group $\pi_1(X)$ is determined by its $5$-dimensional $2$-handlebody $\widehat{M_\alpha}\subset X$, so $\pi_1(X)\cong\pi_1(\widehat{M_\alpha})$. By \autoref{thm: Heegaard diagrams for 5d 2handlebodies}, we have $\widehat{M_\alpha}\cong Y\times B^1$ for some $4$-dimensional $2$-handlebody, and $\Sigma=\partial\widehat{M_\alpha}\cong DY$, where $DY$ is the double of $Y$. Since $\pi_1(Y)\cong \pi_1(DY)$ by the construction of the double, it follows that \[\pi_1(X)\cong \pi_1(\widehat{M_\alpha})\cong\pi_1(Y\times B^1)\cong\pi_1(Y)\cong\pi_1(DY)\cong\pi_1(\Sigma).\]
\end{proof}

\begin{remark}
The homology of a $5$-manifold can also be determined from its Heegaard diagram $(\Sigma,\alpha,\beta)$ using $(9)$ in \autoref{rem: basics for handle decompositions} since the intersections between $\alpha$ and $\beta$ in $\Sigma$ are encoded in the diagram; see \autoref{fig: cobordism from S^4 to non-simply connected homology sphere}, \autoref{fig: stabilizations on a Heegaard diagram}, and \autoref{fig: Wu manifold}.
\end{remark}

The following examples illustrate various constructions of $5$-manifolds using Heegaard diagrams.

\begin{example}\hfill
    \begin{enumerate}
        \item Let $(\Sigma, \alpha, \beta) = (\Sigma, \emptyset, \emptyset)$ be a Heegaard diagram. Then $M_\alpha \cup_\Sigma M_\beta \cong \Sigma \times [-1,1]$, which is the identity cobordism from $\Sigma$ to itself.
        \item Let $(\Sigma, \alpha, \beta) = (\Sigma, \alpha, \emptyset)$. Then $M_\alpha \cup_\Sigma M_\beta \cong M_\alpha$, which is a cobordism from $\Sigma(\alpha)$ to $\Sigma$.
        \item Let $(\Sigma, \alpha, \beta) = (\Sigma, \emptyset, \beta)$. Then $M_\alpha \cup_\Sigma M_\beta \cong M_\beta$, which is a cobordism from $\Sigma$ to $\Sigma(\beta)$.
        \item Let $(\Sigma, \alpha, \beta) = (S^4, U, \emptyset)$, where $U$ is the trivial $2$-knot in $S^4$. Then $M_\alpha \cup_\Sigma M_\beta$ is diffeomorphic to a once-punctured $S^3 \times B^2$, where $\Sigma(\alpha) \cong S^1 \times S^3$ and $\Sigma(\beta) \cong S^4$. Let $X = (M_\alpha \cup_\Sigma M_\beta)\cup(B^4 \times B^1)$ be the $5$-manifold obtained by attaching a $4$-handle along $\{x_0\} \times S^3 \subset S^1 \times S^3$. Then $X$ is diffeomorphic to $S^4 \times [-1,1]$, which corresponds to a first stabilization. We have $\widehat{M_\alpha} \cup_\Sigma M_\beta \cong B^5$ and $\widehat{M_\alpha} \cup_\Sigma \widehat{M_\beta} \cong S^5$. See the left of \autoref{fig: stabilizations on a Heegaard diagram}.
        \item Let $(\Sigma, \alpha, \beta) = (S^4, \emptyset, U)$, where $U$ is the trivial $2$-knot in $S^4$. Then $M_\alpha \cup_\Sigma M_\beta$ is diffeomorphic to a once-punctured $S^3 \times B^2$, where $\Sigma(\alpha) \cong S^4$ and $\Sigma(\beta) \cong S^1 \times S^3$. Let $Y = (M_\alpha\cup_\Sigma M_\beta) \cup (B^4 \times B^1)$ be the $5$-manifold obtained by attaching a $4$-handle along $\{x_0\} \times S^3 \subset S^1 \times S^3$. Then $Y$ is diffeomorphic to $S^4 \times [-1,1]$, which corresponds to a third stabilization. We have $\widehat{M_\alpha} \cup_\Sigma M_\beta \cong S^3\times B^2$ and $\widehat{M_\alpha} \cup_\Sigma \widehat{M_\beta} \cong S^5$. See the right of \autoref{fig: stabilizations on a Heegaard diagram}.
        \item Let $(\Sigma, \alpha, \beta) = (S^2 \times S^2, \{x_0\} \times S^2, \emptyset)$. Then $M_\alpha \cup_\Sigma M_\beta$ is diffeomorphic to a once-punctured $S^2 \times B^3$, where $\Sigma(\alpha) \cong S^4$ and $\Sigma(\beta) \cong S^2 \times S^2$. Also, $\widehat{M_\alpha} \cup_\Sigma M_\beta \cong S^2 \times B^3$. See the left of \autoref{fig: nontrivial s2bundle over s2 with a fiber}, where the $1$-framed unknot with a $0$-framed unknot.
        \item Let $(\Sigma, \alpha, \beta) = (S^2 \tilde{\times} S^2, F, \emptyset)$, where $F$ is a fiber of $S^2 \tilde{\times} S^2$. Then $M_\alpha \cup_\Sigma M_\beta$ is diffeomorphic to a once-punctured $S^2 \tilde{\times} B^3$, where $\Sigma(\alpha) \cong S^4$ and $\Sigma(\beta) \cong S^2 \tilde{\times} S^2$. We have $\widehat{M_\alpha} \cup_\Sigma M_\beta \cong S^2 \tilde{\times} B^3$. See the left of \autoref{fig: nontrivial s2bundle over s2 with a fiber}.
        \item Let $(\Sigma, \alpha, \beta) = (\mathbb{C}P^2 \# \overline{\mathbb{C}P^2}, \mathbb{C}P^1 \# \overline{\mathbb{C}P^1}, \emptyset)$. We can easily see that there is a natural diffeomorphism between $(S^2 \tilde{\times} S^2, F)$ and $(\mathbb{C}P^2 \# \overline{\mathbb{C}P^2}, \mathbb{C}P^1 \# \overline{\mathbb{C}P^1})$; see \autoref{fig: nontrivial s2bundle over s2 with a fiber}. Therefore, $M_\alpha \cup_\Sigma M_\beta$ is diffeomorphic to a once-punctured $S^2 \tilde{\times} B^3$, where $\Sigma(\alpha) \cong S^4$ and $\Sigma(\beta) \cong S^2 \tilde{\times} S^2$. We have $\widehat{M_\alpha} \cup_\Sigma M_\beta \cong S^2 \tilde{\times} B^3$. See the right of \autoref{fig: nontrivial s2bundle over s2 with a fiber}.
        \item Let $(\Sigma, \alpha, \beta) = (S^2 \times S^2, \{x_0\} \times S^2, S^2 \times \{y_0\})$. Then $M_\alpha \cup_\Sigma M_\beta$ is diffeomorphic to $S^4 \times [-1,1]$, which corresponds to a second stabilization. We have $\widehat{M_\alpha} \cup_\Sigma M_\beta \cong B^5$ and $\widehat{M_\alpha} \cup_\Sigma \widehat{M_\beta} \cong S^5$. See the middle of \autoref{fig: stabilizations on a Heegaard diagram}.
        \item Let $(\Sigma, \alpha, \beta) = (S^2 \times S^2, \{x_0\} \times S^2, \beta)$, where $\beta$ is a $2$-knot in $S^2 \times S^2$ homotopic but not isotopic to $S^2 \times \{y_0\}$; see the left of \autoref{fig: cobordism from S^4 to non-simply connected homology sphere}. The geometric intersection number between $\alpha$ and $\beta$ is $|\alpha \cap \beta| = 3$, and the algebraic intersection number is $\alpha \cdot \beta = 1$. The author showed in \cite{kim2025nknotssntimess2contractible} that $\widehat{M_\alpha} \cup_\Sigma M_\beta$ is contractible but not homeomorphic to $B^5$ since $(\widehat{M_\alpha} \cup_\Sigma M_\beta) \times B^1 \cong B^6$ and $\partial(\widehat{M_\alpha} \cup_\Sigma M_\beta) = \Sigma(\beta)$ is non-simply connected. 
        
        Then $M_\alpha \cup_\Sigma M_\beta$ is a $5$-dimensional cobordism from the standard $4$-sphere $\Sigma(\alpha) = S^4$ to a non-simply connected homology $4$-sphere $\Sigma(\beta)$, with a single $2$-handle and a single $3$-handle, which are algebraically but not geometrically cancelled. We can obtain Kirby diagrams of $\Sigma(\alpha)$ and $\Sigma(\beta)$ in the middle and right of \autoref{fig: cobordism from S^4 to non-simply connected homology sphere}, respectively, by replacing the unlink with a dotted unlink and the bands with $0$-framed $2$-handles; see \autoref{pro: kirby diagram for 2-surgeries}.

        The middle of the figure represents $S^4$ after removing a cancelling $(1,2)$-pair and a cancelling $(2,3)$-pair. In the right of the figure, we can compute the fundamental group $\pi_1(\Sigma(\beta))$ directly (dotted $1$-handles correspond to generators, and $2$-handles correspond to relations), and the author showed that there exists an epimorphism from $\pi_1(\Sigma(\beta))$ to the alternating group $A_5$. This technique can be generalized to construct contractible $n$-manifolds not homeomorphic to the standard ball $B^n$ for all $n \geq 5$; see \cite{kim2025nknotssntimess2contractible}.

        \item Let $(\Sigma, \alpha, \beta) = (X \# S^2 \tilde{\times} S^2, F, K \# F)$, where $X$ is a closed $4$-manifold, $K$ is a $2$-knot in $X$ with trivial normal bundle, and $F$ is a fiber of $S^2 \tilde{\times} S^2$. Then $M_\alpha \cup_\Sigma M_\beta$ is a $5$-dimensional cobordism from $X$ to the Gluck twist $X_K$ of $X$ along $K$, with a single $2$-handle and a single $3$-handle, where $\Sigma(\alpha) \cong X$ and $\Sigma(\beta) \cong X_K$; see \autoref{sec:Gluck twists} for more details.
    \end{enumerate}
\end{example}

\begin{remark}
    Let $(\Sigma,\alpha,\beta)$ be a Heegaard diagram. By \autoref{thm: existence of singular banded unlink diagram}, there exists a singular banded unlink diagram $(\mathcal{K},L,B)=(\mathcal{K},L_1\cup L_2,B_1\cup B_2)$ such that
    \begin{enumerate}
        \item $(\mathcal{K},L,B)$ is a singular banded unlink diagram of $(\Sigma,\alpha\cup\beta)$, 
        \item $(\mathcal{K},L_1,B_1)$ is a banded unlink diagram of $(\Sigma,\alpha)$, 
        \item $(\mathcal{K},L_2,B_2)$ is a banded unlink diagram of $(\Sigma,\beta)$.
    \end{enumerate}
\end{remark}

We may simply write $(\Sigma,\alpha,\beta)=(\mathcal{K},L_1\cup B_1, L_2\cup B_2)$.

\begin{proposition}[A Heegaard diagram of a $5$-dimensional $2$-handlebody]\label{pro:how to draw Heegaard diagrams for 5d 2handlebodies}
Let $X$ be a $5$-dimensional $2$-handlebody. Then there exists a $4$-dimensional $2$-handlebody $Y$ such that $X \cong Y \times B^1$ by \autoref{thm: Heegaard diagrams for 5d 2handlebodies}. Let $\mathcal{K}$ be a Kirby diagram of $Y$ and $\mathcal{K}'$ be a Kirby diagram of the double $DY$ of $Y$. By \autoref{pro: how to draw a Kirby diagram for the double of a 4-dimensional 2-handlebody}, $\mathcal{K}'$ is obtained from $\mathcal{K}$ by adding $0$-framed meridians to each $2$-handle in $\mathcal{K}$, $3$-handles with the same number of $1$-handles in $\mathcal{K}$, and a single $4$-handle. Add red circles $r$, each parallel to a $0$-framed meridian in $\mathcal{K}'$. Each red circle bounds a properly embedded trivial disk in the $0$-handle and bounds a disk that is a parallel copy of the core disk of a $2$-handle, corresponding to a banded unlink diagram of the belt sphere of each $5$-dimensional $2$-handle of $X$. Then $(\Sigma, \alpha, \beta) = (\mathcal{K}', r, \emptyset)$ is a Heegaard diagram of $X$ and $\widehat{M_\alpha} \cup M_\beta \cong X$. See the middle of \autoref{fig: Mazur manifold} for a Heegaard diagram of $M\times B^1$, where $M$ is the Mazur manifold.
\end{proposition}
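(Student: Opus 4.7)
The plan is to verify $\widehat{M_\alpha}\cong X$, where $\alpha=\widehat{S(\mathcal{K}',r,\emptyset)}$ is the closed surface in $\Sigma=\widehat{M_{\mathcal{K}'}}$ realized by the banded unlink diagram. Since $\beta=\emptyset$ gives $M_\beta\cong\Sigma\times[0,1]$, which is just a collar, this reduces the goal to $\widehat{M_\alpha}\cong X$. By \autoref{thm: product structures on handlebodies}, $X\cong Y\times B^1$, so after smoothing corners $\partial X\cong DY$; by \autoref{pro: how to draw a Kirby diagram for the double of a 4-dimensional 2-handlebody}, $\widehat{M_{\mathcal{K}'}}\cong DY$, so $\Sigma\cong\partial X$ as required.

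Next I would exploit the dual handle decomposition of $X=Y\times B^1$. If $Y$ has one $0$-handle, $k$ $1$-handles, and $|r|$ $2$-handles, then turning the handles of $X$ upside down yields $|r|$ $3$-handles, $k$ $4$-handles, and one $5$-handle attached to $\partial X$. The $\natural^k(S^1\times B^4)$ used to form $\widehat{M_\alpha}$ from $M_\alpha$ is precisely the union of these dual $4$- and $5$-handles, and its attachment is unique up to diffeomorphism by \cite{cavicchioli1993determination}. Hence the problem reduces to identifying the attaching $2$-spheres of the dual $3$-handles of $X$ with the components of $\alpha$ in $DY$.

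For the geometric identification, I would fix a $4$-dimensional $2$-handle $h\cong B^2\times B^2$ of $Y$ with meridian $\mu_h$ and analyze the corresponding $5$-dimensional $2$-handle $h\times B^1\cong B^2\times B^3$ of $X$. Its belt $2$-sphere $\{0\}\times S^2$ decomposes in $\partial X=DY$ as a thin cylinder over $\mu_h\subset\partial Y$ capped off by two disks living in the two halves $Y$ and $\overline{Y}$ of the double. Under the identification $DY\cong\widehat{M_{\mathcal{K}'}}$, one cap appears as a properly embedded trivial disk in the $0$-handle $B^4$ bounded by $\mu_h\subset S^3$, and the other appears as a parallel copy of the core disk of the dual $2$-handle of $\mathcal{K}'$ attached along $\mu_h$. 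Unwinding \autoref{def: realizing surface} with $L=r$ and $B=\emptyset$, the surface $\widehat{S(\mathcal{K}',r,\emptyset)}$ is precisely the union of these two disks glued along the red circle $r$ (which is isotopic to $\mu_h$), so $\alpha$ is isotopic to the collection of belt $2$-spheres of the $5$-dimensional $2$-handles of $X$.

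The main obstacle will be this last geometric bookkeeping: one must carefully track the identification $\partial(Y\times B^1)\cong DY\cong\widehat{M_{\mathcal{K}'}}$ and confirm that the two disks produced by the banded unlink construction, which a priori lie in the $0$-handle of the Kirby diagram and in $\partial M_{\mathcal{K}'}$, genuinely correspond to the two caps of the belt $2$-sphere of $h\times B^1$ in the two halves of the double. Once this is verified for every $2$-handle $h$ of $Y$, the cobordism $M_\alpha$ coincides with the union of $\partial X\times[-1,0]$ and the dual $3$-handles of $X$, so that capping by $\natural^k(S^1\times B^4)$ restores the dual $4$- and $5$-handles, yielding $\widehat{M_\alpha}\cong X$ as claimed.
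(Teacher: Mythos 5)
Your proposal is correct and takes essentially the same approach the construction presupposes: the paper states this proposition without a separate proof, and your argument — reducing to $\widehat{M_\alpha}\cong X$ via $\beta=\emptyset$, using the dual handle decomposition of $Y\times B^1$, and decomposing each belt $2$-sphere of a $5$-dimensional $2$-handle $h\times B^1$ into a cylinder over the belt circle of $h$ capped by the two copies of the cocore of $h$ in the two halves of $DY$ — is exactly the content being packaged into the sentence ``corresponding to a banded unlink diagram of the belt sphere.'' The only small caution: in \autoref{def: realizing surface} the second cap of $\widehat{S(\mathcal{K}',r,\emptyset)}$ is a trivial disk in $\partial\widehat{M_{\mathcal{K}'}}^\circ$, not literally the parallel core of the dual $2$-handle, so the word ``precisely'' in your identification should be an ``up to isotopy rel $r$'' (the isotopy slides the trivial disk across the dual $2$-handle), which is exactly the bookkeeping you flag at the end.
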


We can perform some moves on Heegaard diagrams to show that two $5$-manifolds are diffeomorphic.

\begin{example}
Let $M$ be the Mazur manifold. Then $M\times B^1$ is diffeomorphic to $B^5$.
\end{example}

\begin{proof}
See the right of \autoref{fig: Mazur manifold}.
\end{proof}

\begin{theorem}\label{thm: cobordism from S2 bundles over S2 to the double}
Let $Y$ be a $4$-dimensional $2$-handlebody, and let $DY$ be the double of $Y$. Then there exists a $5$-dimensional cobordism from $(\#^m(S^2\times S^2))\#(\#^n(S^2\Tilde{\times}S^2))$ to $DY$ consisting only $3$-handles for some $m,n\geq 0$.
\end{theorem}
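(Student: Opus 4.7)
The plan is to construct a $5$-dimensional cobordism $W$ from $DY$ to some $\#^m(S^2\times S^2)\#\#^n(S^2\tilde{\times}S^2)$ consisting only of $5$-dimensional $2$-handles, and then reverse $W$. Taking the dual handle decomposition, the reverse of $W$ is a cobordism in the opposite direction whose handles all have index $5-2=3$, i.e., only $5$-dimensional $3$-handles, which is exactly what we want. Since attaching a $5$-dimensional $2$-handle to a $4$-manifold corresponds to performing a $1$-surgery on a framed circle in the boundary, the construction reduces to realizing some $\#^m(S^2\times S^2)\#\#^n(S^2\tilde{\times}S^2)$ from $DY$ by a sequence of $1$-surgeries on circles.

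Let $k$ and $\ell$ denote the number of $1$- and $2$-handles of $Y$. The group $\pi_1(DY)\cong\pi_1(Y)$ is normally generated by the $k$ cores of the $1$-handles of $Y$, each realized as an embedded circle in $DY$. The first step is to perform $1$-surgery on these $k$ circles (with any chosen framings), killing these generators and yielding a simply connected closed $4$-manifold $Z$. Since signature is preserved under $5$-dimensional cobordism, and $\sigma(DY)=\sigma(Y)+\sigma(\overline{Y})=0$ by Novikov additivity, we also have $\sigma(Z)=0$. The second step is to apply Wall's stable classification of simply connected closed smooth $4$-manifolds: there exist $p\geq 0$ and $m,n\geq 0$ such that
\[
Z\mathbin{\#}\#^p(S^2\times S^2)\;\cong\;\#^m(S^2\times S^2)\#\#^n(S^2\tilde{\times}S^2),
\]
with $m$ and $n$ determined by the rank and parity of the intersection form of $Z$. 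Each connect-sum stabilization by $S^2\times S^2$ is realized as a $1$-surgery on a null-homotopic circle with trivial framing, whose local model is the $1$-surgery on an unknot in $S^4$ producing $S^2\times S^2$.

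Concatenating the $k+p$ $1$-surgeries yields a $5$-dimensional cobordism $W$ from $DY$ to $\#^m(S^2\times S^2)\#\#^n(S^2\tilde{\times}S^2)$ consisting entirely of $5$-dimensional $2$-handles, and reversing $W$ produces the desired cobordism from $\#^m(S^2\times S^2)\#\#^n(S^2\tilde{\times}S^2)$ to $DY$ consisting only of $3$-handles. The main obstacle is the correct application of Wall's stable classification theorem, combined with the careful verification that each reduction step can be realized as a $5$-dimensional $2$-handle attachment (i.e., a $1$-surgery on a framed circle), so that after reversing, the cobordism is built purely from $3$-handles.
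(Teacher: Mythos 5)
Your proof is correct but takes a genuinely different route from the paper. Both arguments begin with the same observation (reversed): the $3$-handles of the desired cobordism are dual to $5$-dimensional $2$-handles attached to $DY\times[0,1]$, i.e., to $1$-surgeries on circles in $DY$, and the natural circles to surger are those generating $\pi_1(DY)\cong\pi_1(Y)$, namely the cores of the $1$-handles of $Y$. Where you diverge is in identifying the result. You treat it abstractly: you get a simply connected $Z$ with $\sigma(Z)=0$, invoke the classification of indefinite unimodular forms to match its intersection form with that of some $\#^m(S^2\times S^2)\#\#^n(S^2\tilde\times S^2)$, and then invoke Wall's stable diffeomorphism theorem, paying for it with extra $1$-surgeries on null-homotopic unknots (the $\#(S^2\times S^2)$ stabilizations). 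The paper instead reads the answer directly off the Kirby diagram: in the standard Kirby diagram $\mathcal K'=L_1\cup L_2\cup J$ of $DY$ built by \autoref{pro: how to draw a Kirby diagram for the double of a 4-dimensional 2-handlebody}, every $2$-handle component of $L_2$ carries a $0$-framed meridian in $J$, so after deleting the dotted unlink $L_1$ the $0$-framed-meridian trick reduces $\mathcal K''=L_2\cup J$ to a disjoint union of Hopf links, one per $2$-handle of $Y$, i.e.\ to a Kirby diagram of $\#^m(S^2\times S^2)\#\#^n(S^2\tilde\times S^2)$ with $m+n=|L_2|$ already, no further stabilization needed. The paper's route is thus more elementary (Kirby calculus only, no Wall and no form classification), produces a shorter cobordism with exactly $|L_1|$ $3$-handles, and — in keeping with the spirit of the paper — gives an explicit Heegaard diagram $(\mathcal K'',\emptyset,b)$ of the cobordism via \autoref{pro: kirby diagram for 2-surgeries}. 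Your argument is valid and perhaps more conceptual, but be aware you are using substantially heavier machinery to prove a statement that admits a purely diagrammatic proof, and you should really spell out the intersection-form step (signature zero plus unimodularity plus indefiniteness gives $\oplus H$ or $\oplus\langle1\rangle\oplus\langle-1\rangle$ by Serre, with the rank-zero homotopy-sphere case handled separately) rather than just gesture at ``rank and parity.''
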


\begin{proof}
Let $\mathcal{K}=L_1\cup L_2$ be a Kirby diagram of $Y$, and let $\mathcal{K}'=L_1\cup L_2\cup J$ be the natural Kirby diagram of $DY$, as described in \autoref{pro: how to draw a Kirby diagram for the double of a 4-dimensional 2-handlebody}, where $J$ is a union of the $0$-framed meridians of $L_2$; see the bottom left of \autoref{fig: Kirby diagram for 1-surgery }. By the construction of $DY$, we have $\widehat{M_{\mathcal{K}'}}=M_{\mathcal{K}'}\cup \natural^{|L_1|}(S^1\times B^3)$, i.e., the number of $1$- and $3$-handles of $DY$ are the same. Replace the dotted link $L_1$ in $\mathcal{K}'$ with blue circles $b$; see the bottom left of \autoref{fig: Kirby diagram for 1-surgery }. Then $\mathcal{K}''=\mathcal{K}'\setminus L_1$ is a union of some Hopf links, each containing a $0$-framed unknotted component. Thus, $\mathcal{K}''$ is a Kirby diagram of $(\#^m(S^2\times S^2))\#(\#^n(S^2\Tilde{\times}S^2))$ for some $m,n\geq 0$. Here, $\widehat{M_{\mathcal{K}''}}=M_{\mathcal{K}''}\cup B^4$. Therefore, $(\mathcal{K}'',\emptyset,b)$ is a Heegaard diagram of a $5$-dimensional cobordism from $(\#^m(S^2\times S^2))\#(\#^n(S^2\Tilde{\times}S^2))$ to $DY$ since the Kirby diagram of the surgery on $(\#^m(S^2\times S^2))\#(\#^n(S^2\Tilde{\times}S^2))$ along $\beta$ is $\mathcal{K}'$ by \autoref{pro: kirby diagram for 2-surgeries}.
\end{proof}

We now provide several examples of Heegaard diagrams for closed $5$-manifolds. By Lawson  \cite{lawson1978decomposing}, every closed $5$-manifold $X$ can be constructed as a twisted double $X=W\cup_{f} W$, where $W$ is a $5$-dimensional $2$-handlebody and $f$ is a self-diffeomorphism of $\partial W$. Note this does not mean that for any self-indexing Morse function $g:X\rightarrow \mathbb{R}$, the two $5$-dimensional $2$-handlebodies $g^{-1}((-\infty,\frac{5}{2}])$ and $g^{-1}([\frac{5}{2},\infty))$ are necessarily diffeomorphic. If the following conjecture holds, then Lawson's theorem would follow as a consequence.

\begin{conjecture}[\cite{kim2025note}]
Let $X$ and $X'$ be $(2k+1)$-dimensional $k$-handlebodies. If their boundaries $\partial X$ and
$\partial X'$ are diffeomorphic, then $X$ and $X'$ are diffeomorphic.
\end{conjecture}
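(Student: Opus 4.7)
The plan is to reduce the conjecture to a diagrammatic question via the product-structure theorem and then attack it with the uniqueness machinery for Heegaard diagrams. First I would apply Theorem 1.9 to write $X \cong Y \times B^1$ and $X' \cong Y' \times B^1$ for some $2k$-dimensional $k$-handlebodies $Y$ and $Y'$, so that the hypothesis $\partial X \cong \partial X'$ becomes $D(Y) \cong D(Y')$ and the goal becomes $Y \times B^1 \cong Y' \times B^1$.

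Fix a diffeomorphism $\phi : D(Y) \to D(Y')$ and choose handle decompositions of $Y$ and $Y'$. I would then encode $X$ and $X'$ by half Heegaard-type diagrams $(\Sigma,\alpha,\emptyset)$ and $(\Sigma',\alpha',\emptyset)$ using the $(2k+1)$-dimensional analogue of \autoref{pro:how to draw Heegaard diagrams for 5d 2handlebodies}: $\Sigma = D(Y)$ and $\alpha$ is the framed $k$-link of meridians to the $k$-handles of $Y$ sitting in the double, and similarly for $\Sigma'$ and $\alpha'$. Transporting $\alpha'$ by $\phi^{-1}$ places both $k$-links inside the same $2k$-manifold $\Sigma$, and the conjecture reduces to showing that $(\Sigma,\alpha,\emptyset)$ and $(\Sigma,\phi^{-1}(\alpha'),\emptyset)$ represent diffeomorphic $(2k+1)$-dimensional $k$-handlebodies.

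The central step is a \emph{restricted} uniqueness statement: any two such diagrams with a fixed underlying $\Sigma$ should be related by isotopies, handle slides, and first stabilizations alone, without the second stabilization (which would alter $\Sigma$) and without the third stabilization (which would introduce $(k+1)$-handles and so leave the category of $k$-handlebodies). The appropriate analogue of \autoref{thm: Heegaard diagrams representing diffeomorphic 5-manifolds} already gives the full list of moves, so the content is precisely to show that any second or third stabilization performed in an intermediate step can be undone inside the fixed $\Sigma$. A natural strategy is to track the homological data of $\alpha$ in $H_k(\Sigma)$ and to invoke a Whitney-trick argument in dimension $2k \geq 4$ to promote algebraic cancellations of the stabilizing $S^k \times S^k$ summands into geometric ones.

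I expect this restricted-uniqueness step to be the main obstacle. Already for $k=1$ it specializes to a Waldhausen-type rigidity for Heegaard splittings of a fixed $3$-manifold, and for $k=2$ it brushes against genuinely four-dimensional phenomena, since the second stabilization operates via connected sum with $S^k \times S^k$, the very operation that distinguishes the smooth from the topological category in dimension $4$. A complete proof would likely require either a new surgery-theoretic input or a careful handle-trading argument specific to the $(2k+1)$-dimensional setting that has no honest $2k$-dimensional counterpart; pending such input, the statement is posed as a conjecture rather than a theorem.
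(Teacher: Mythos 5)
The statement you were asked to prove is explicitly labeled a \emph{conjecture} in the paper, attributed to \cite{kim2025note}; the paper offers no proof of it, so there is nothing to compare your proposal against. You correctly recognized this and framed your response as a plan of attack rather than a proof, ending with an honest acknowledgment that the key step is missing.

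Your reduction is the right first move: by \autoref{thm: product structures on handlebodies}, write $X \cong Y \times B^1$ and $X' \cong Y' \times B^1$ with $Y, Y'$ being $2k$-dimensional $k$-handlebodies, so the hypothesis becomes $DY \cong DY'$ and the goal is $Y \times B^1 \cong Y' \times B^1$. Your "restricted uniqueness" reformulation is also an accurate description of where the difficulty sits. But be aware that this reformulation does not by itself constitute progress. Translating the question into "any sequence of moves relating $(\Sigma, \alpha, \emptyset)$ to $(\Sigma, \phi^{-1}(\alpha'), \emptyset)$ can be reorganized to avoid second and third stabilizations" simply recodes the original problem rather than solving it, and the Heegaard diagram calculus gives no leverage on the fact that the diffeomorphism $\phi : DY \to DY'$ need not respect the double structure and can therefore carry $\alpha'$ to a framed $k$-link in $\Sigma$ bearing no visible relation to $\alpha$. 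Your remark that for $k=2$ this runs straight into the distinction between smooth and topological $4$-dimensional phenomena via stabilization by $S^2 \times S^2$ is the correct diagnosis of why no handle-trading or Whitney-disk argument is currently available. Your conclusion that the statement remains conjectural is exactly right.
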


We first explain how to draw a Heegaard diagram of a twisted double of a $5$-dimensional $2$-handlebody in \autoref{pro: how to draw Heegaard diagrams for the twisted double}. We then discuss Barden's classification of simply connected 5-manifolds and present a Heegaard diagram of the Wu manifold, which is a twisted double of a $5$-dimensional $2$-handlebody, i.e. $f\neq id$; see \autoref{fig: Wu manifold}. Here, the Wu manifold is the generator of the $5$-dimensional oriented cobordism group $\Omega^{SO}_5\cong\mathbb{Z}_2$.

\begin{proposition}[A Heegaard diagram of the twisted double of a $5$-dimensional $2$-handlebody]\label{pro: how to draw Heegaard diagrams for the twisted double}
Let $X = W \cup_f W$ be a closed $5$-manifold, where $W$ is a $5$-dimensional $2$-handlebody and $f$ is a self-diffeomorphism of $\partial W$. Then $W = Y \times B^1$ for some $4$-dimensional $2$-handlebody $Y$. Let $C \subset Y$ be the collection of cocores of the $2$-handles of $Y$. The double of the pair $(Y, C)$ is $D(Y, C) = (DY, DC)$, where $DY$ is the double of $Y$ and $DC\subset DY$ is the union of the doubled cocores. This corresponds to the pair $(\partial W, S)$, where $S$ is the collection of belt spheres of the $2$-handles of $W$. Therefore, $(\Sigma, \alpha, \beta) = (DY, DC, f(DC))$ is a Heegaard diagram of $X$.

By \autoref{pro:how to draw Heegaard diagrams for 5d 2handlebodies}, we have $(\Sigma, \alpha) = (\mathcal{K}', r)$, where $\mathcal{K} = L_1 \cup L_2$ is a Kirby diagram of $Y$, $\mathcal{K}'$ is a Kirby diagram of $DY$ obtained from $\mathcal{K}$ by adding $0$-framed meridians of $L_2$, and $r$ is a banded unlink parallel to these $0$-framed meridians. To construct $\beta = f(DC)$, draw a banded unlink $b$ representing $f(r)$; see \autoref{fig: Wu manifold} for the Wu manifold. Then $(\mathcal{K}', r, b)$ gives a banded unlink diagram of the Heegaard diagram $(\Sigma, \alpha, \beta)$, and we simply write $(\Sigma, \alpha, \beta) = (\mathcal{K}', r, b)$.

For example, when $f$ is the identity map, the banded unlink $b$ is parallel to the red circles $r$, which corresponds to attaching $5$-dimensional $3$-handles along the belt spheres of the $2$-handles of $X$. See \autoref{fig: S3 bundle over a surface} for the case of an $S^3$-bundle over a surface.
\end{proposition}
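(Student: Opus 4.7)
The plan is to verify the claimed identification by unwinding three structural ingredients in order: the product structure on $W$, the explicit description of the belt spheres of the $2$-handles of $W$ inside $\partial W$, and finally the interpretation of the twisted double as attaching dual handles via $f$. Once these are in place, the banded unlink statement follows immediately by invoking \autoref{pro:how to draw Heegaard diagrams for 5d 2handlebodies} applied to each of the two copies of $W$ separately.

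First, I would apply \autoref{thm: product structures on handlebodies} with $k=2$ and $n=5$ to obtain a $4$-dimensional $2$-handlebody $Y$ with $W \cong Y \times B^1$. Under this diffeomorphism, $\partial W = \partial(Y\times B^1) = (\partial Y \times B^1) \cup (Y \times \{-1,1\})$, which is canonically identified with the double $DY = Y \cup_{\partial Y}\overline{Y}$. The collection $C \subset Y$ of cocore disks of the $2$-handles of $Y$ is a properly embedded union of $2$-disks, so $DC \subset DY$ is a union of embedded $2$-spheres in $\Sigma := DY$ with trivial normal bundle.

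Next, I would carry out the key calculation: identifying the belt spheres of the $5$-dimensional $2$-handles of $W$ with $DC$. A $2$-handle of $Y$ has the form $B^2\times B^2$ with cocore $\{0\}\times B^2$, and multiplying by $B^1$ gives the corresponding $2$-handle $B^2\times B^2\times B^1 \cong B^2\times B^3$ of $W$. Its belt sphere is $\{0\}\times \partial(B^2\times B^1)$, which decomposes as the union of $\{0\}\times B^2\times\{-1\}$, $\{0\}\times B^2\times\{1\}$, and the annulus $\{0\}\times \partial B^2\times B^1$. The first two pieces are parallel copies of the cocore $\{0\}\times B^2$ sitting in the two sides $Y\times\{\pm 1\}$ of the double, and the annulus glues them across the collar $\partial Y\times B^1 \subset \partial W$; under the identification $\partial W \cong DY$ this is exactly the doubled cocore. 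Thus the belt spheres of the $2$-handles of $W$ are precisely $DC \subset DY$.

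Now I would interpret $X=W\cup_f W$ as the result of attaching to $W$ the dual handle decomposition of the second copy of $W$ along $f$. Since $W$ consists of a $0$-handle, $1$-handles, and $2$-handles, its dual decomposition (read top-down) consists of $3$-handles attached to $\partial W$, followed by $4$-handles and a single $5$-handle, where each dual $3$-handle is attached along the belt sphere of the corresponding $2$-handle of the second copy of $W$. Transporting these belt spheres via the gluing map $f$, the attaching $2$-spheres of the $5$-dimensional $3$-handles of $X$ sit in $\partial W \cong DY$ as $f(DC)$. Combining this with the previous step, $X$ is obtained from $\Sigma\times[-1,1]$ by attaching $2$-handles with belt spheres $DC\times\{-1\}$ (capped by $\widehat{M_\alpha}\cong W$) and $3$-handles along $f(DC)\times\{1\}$ (capped by $\widehat{M_\beta}\cong \overline{W}$), which is exactly the definition of the Heegaard diagram $(\Sigma,\alpha,\beta)=(DY,DC,f(DC))$.

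Finally, the banded unlink description follows by applying \autoref{pro:how to draw Heegaard diagrams for 5d 2handlebodies} to $W$ to obtain $(\Sigma,\alpha)=(\mathcal{K}',r)$, and then drawing any banded unlink $b$ representing the image $f(r)$ of the doubled cocores under $f$ in $\Sigma$; the diffeomorphism invariance of banded unlink diagrams from \autoref{thm: existence of singular banded unlink diagram} guarantees that such a $b$ exists and realizes $\beta=f(DC)$. The main obstacle I anticipate is the belt-sphere identification in the second step, since it requires carefully tracking the $B^1$-factor through the product structure and recognizing the resulting cylinder-plus-two-disks decomposition as the double of the cocore; everything else is a direct application of previously established results.
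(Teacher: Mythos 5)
Your proposal is correct and follows the same construction the paper sketches in the proposition statement (the paper gives no separate proof). The one place where you add genuine content is the explicit belt-sphere computation in the second step, showing that $\{0\}\times\partial(B^2\times B^1)$ decomposes as two parallel copies of the cocore plus the annulus across the collar and hence is exactly the doubled cocore; the paper simply asserts ``this corresponds to the pair $(\partial W, S)$'' without unpacking it, so your version makes the identification precise rather than taking a different route.
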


Barden \cite{barden1965simply} classified simply connected, closed, orientable, smooth $5$-manifolds. The key idea is that every simply connected $5$-manifold can be constructed by gluing two copies of a $5$-dimensional $2$-handlebody without $1$-handles (a boundary connected sum of some copies of $B^3$-bundles over $S^2$), where the gluing map defined on the boundary is realized by an automorphism of the second homology group of the boundary.

There are two possible $B^3$-bundles over $S^2$; the trivial bundle $S^2\times B^3$ and the non-trivial bundle $S^2\tilde{\times}B^3$ since $\pi_1(SO(3))\cong\mathbb{Z}_2$. We note that $\partial(S^2\times B^3)=S^2\times S^2$ and $\partial(S^2\tilde{\times}B^3)=S^2\tilde{\times}S^2\cong \mathbb{C}P^2\#\overline{\mathbb{C}P^2}$; see \autoref{fig: nontrivial s2bundle over s2 with a fiber}. Let $\{a,b\}=\{S^2\times \{y_0\},\{x_0\}\times S^2\}$ be the canonical generators of $H_2(S^2\times S^2)$ and $\{c,d\}=\{\mathbb{C}P^1,\overline{\mathbb{C}P^1}\}$ be the generators of $\mathbb{C}P^2\#\overline{\mathbb{C}P^2}$. We also note that $\partial((S^2\times B^3)\natural(S^2\times B^3))=(S^2\times S^2)\#(S^2\times S^2)$ and $\partial((S^2\tilde{\times} B^3)\natural(S^2\tilde{\times} B^3))\cong (\mathbb{C}P^2\#\overline{\mathbb{C}P^2})\#(\mathbb{C}P^2\#\overline{\mathbb{C}P^2})$. Let $\{a_1,b_1,a_2,b_2\}$ be the canonical generators of $H_2((S^2\times S^2)\#(S^2\times S^2))$, and let $\{c_1,d_1,c_2,d_2\}$ be the canonical generators of $H_2((\mathbb{C}P^2\#\overline{\mathbb{C}P^2})\#(\mathbb{C}P^2\#\overline{\mathbb{C}P^2}))$.

Consider the matrices 
\[A(k)=\begin{pmatrix}
1 & 0 & 0 & -k \\
0 & 1 & 0 & 0 \\
0 & k & 1 & 0 \\
0 & 0 & 0 & 1 
\end{pmatrix}\; \text{and}\; 
B(n)=\begin{pmatrix}
1 & n & -n & 0 \\
n & 1 & 0 & n \\
n & 0 & 1 & n \\
0 & -n & n & 1 
\end{pmatrix}.\]

By work of Wall \cite{wall1964diffeomorphisms}, there exist three diffeomorphisms: 
\begin{enumerate}
    \item $f_k:(S^2\times S^2)\#(S^2\times S^2)\rightarrow (S^2\times S^2)\#(S^2\times S^2)$ such that the induced map $(f_k)_{*}$ on $H_2$ has matrix representation $A(k)$,
    \item $g_j:(\mathbb{C}P^2\#\overline{\mathbb{C}P^2})\#(\mathbb{C}P^2\#\overline{\mathbb{C}P^2})\rightarrow (\mathbb{C}P^2\#\overline{\mathbb{C}P^2})\#(\mathbb{C}P^2\#\overline{\mathbb{C}P^2})$ such that the induced map $(g_j)_{*}$ has matrix representation $B(2^{j-1})$,
    \item $g_{-1}:\mathbb{C}P^2\#\overline{\mathbb{C}P^2}\rightarrow \mathbb{C}P^2\#\overline{\mathbb{C}P^2}$ such that the induced map $(g_j)_{*}$ on $H_2$ has matrix representation $\begin{pmatrix}
       1 & 0 \\
       0 & -1
    \end{pmatrix}.$
\end{enumerate}

\begin{definition}[\cite{barden1965simply}]\label{def: examples of Barden's manifolds}We define some $5$-manifolds.
    \begin{enumerate}
        \item $M_{\infty}=(S^2\times B^3)\cup_{id}(S^2\times B^3)=S^2\times S^3$
        \item $M_k=((S^2\times B^3)\natural(S^2\times B^3))\cup_{f_k}((S^2\times B^3)\natural(S^2\times B^3))$
        \item $X_{-1}=(S^2\tilde{\times} B^3)\cup_{g_{-1}}(S^2\tilde{\times} B^3)=SU(3)/SO(3)$
        \item $X_0=S^5$
        \item $X_{\infty}=(S^2\tilde{\times} B^3)\cup_{id}(S^2\tilde{\times} B^3)=S^2\tilde{\times}S^3$
        \item $X_j=((S^2\tilde{\times} B^3)\natural(S^2\tilde{\times} B^3))\cup_{g_j}((S^2\tilde{\times} B^3)\natural(S^2\tilde{\times} B^3))$
\end{enumerate}
    \end{definition}

\begin{theorem}[\cite{barden1965simply}]\label{thm: Barden's classification}
Every simply connected, closed, orientable, smooth $5$-manifold is diffeomorphic to \[X_j\; \text{or} \;X_j\#M_{k_1}\#\cdots\#M_{k_s},\] where $-1\leq j \leq\infty,1<k_1\leq k_2\leq\cdots\leq k_s$, and either $k_i$ divides $k_{i+1}$ or $k_{i+1}=\infty$. 
\end{theorem}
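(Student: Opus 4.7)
The plan is to use the Heegaard diagram machinery developed in this paper, combined with Wall's classification of self-diffeomorphisms of simply connected $4$-manifolds, to reproduce Barden's list. I begin by applying the existence theorem to write $X\cong\widehat{M_\alpha}\cup_\Sigma\widehat{M_\beta}$ for some Heegaard diagram $(\Sigma,\alpha,\beta)$ with $\Sigma(\alpha)\cong\#^k(S^1\times S^3)$ and $\Sigma(\beta)\cong\#^r(S^1\times S^3)$. Since $X$ is simply connected, the corollary computing $\pi_1(X)\cong\pi_1(\widehat{M_\alpha})$ (and analogously $\pi_1(X)\cong\pi_1(\widehat{M_\beta})$) forces both $5$-dimensional $2$-handlebodies to be simply connected.

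Next I would reduce to the case $k=r=0$. Simple connectedness of $\widehat{M_\alpha}$ says that the $2$-handle attaching circles normally generate the free group on its $1$-handles, and because a $2$-handle's attaching sphere has codimension $4$ in the $5$-dimensional bulk, every algebraic $(1,2)$-cancelling pair can be realized geometrically through a Whitney-type move whose Whitney disks live in the ambient $5$-manifold (where $\pi_1=1$). Iterating handle slides and geometric cancellations produces a handle decomposition of $\widehat{M_\alpha}$ with only a $0$-handle and $2$-handles, and the same argument applies to $\widehat{M_\beta}$. The attaching circles of the remaining $2$-handles then lie in $S^4$, where by general position they form a split unlink of unknots, each equipped with one of two framings indexed by $\pi_1(SO(3))\cong\mathbb{Z}_2$. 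Hence $\widehat{M_\alpha}\cong\natural^p(S^2\times B^3)\,\natural\,\natural^q(S^2\tilde{\times}B^3)$, and likewise $\widehat{M_\beta}$ is a boundary connected sum of $B^3$-bundles over $S^2$.

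At this point $X$ is realized as a twisted double $W\cup_\phi W'$, where $W,W'$ are boundary connected sums of $B^3$-bundles over $S^2$ and $\phi$ is a diffeomorphism of their common boundary, a connected sum of $S^2\times S^2$ and $S^2\tilde{\times}S^2$ factors. By Wall's theorem, $\phi$ is isotopic to a diffeomorphism whose action on $H_2$ is an isometry of the intersection form, and the distinguished generators $f_k,g_j,g_{-1}$ recalled just before the theorem supply enough normalizing automorphisms to put $\phi$ into a block-diagonal form whose blocks correspond exactly to the $X_j$ and $M_{k_i}$ factors of the conclusion. The divisibility $k_i\mid k_{i+1}$ then emerges from the invariant-factor decomposition (Smith normal form) of the resulting integer matrix, which captures the torsion subgroup of $H_2(X;\mathbb{Z})$.

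The main obstacle I expect is the $1$-handle cancellation step. While the Whitney disks needed to promote an algebraic $(1,2)$-cancellation to a geometric one are guaranteed to exist in the $5$-dimensional bulk by simple connectedness, arranging them to have interiors disjoint from the remaining handle cores requires careful general-position bookkeeping, and one must also verify that the sequence of handle slides needed to produce an algebraic cancelling pair in the first place (a Tietze-style simplification of the presentation given by the $2$-handle attaching circles) can always be carried out in dimension $5$. A secondary difficulty is verifying that Wall's normalization of $\phi$ respects the second Stiefel--Whitney class of $X$, so that the spin versus non-spin dichotomy among the $X_j$ blocks is recovered correctly and the Barden invariant $j$ is unambiguously identified.
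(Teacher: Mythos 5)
The paper does not prove this theorem; it is cited directly from Barden \cite{barden1965simply}, so there is no in-paper argument to compare against. Your outline (Heegaard decompose, kill the $1$-handles using simple connectivity, realize $X$ as a twisted double of boundary connected sums of $B^3$-bundles over $S^2$, then normalize the gluing diffeomorphism via Wall's automorphisms) does track the known strategy, but it has two substantive gaps.

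The $1$-handle elimination argument is wrong as stated. The intersections to be simplified are between the $2$-handle attaching circles $(S^1)$ and the $1$-handle belt spheres $(S^3)$ inside a $4$-dimensional level set, so the requisite Whitney disks would be $2$-disks in a $4$-manifold---precisely the dimension where the Whitney trick fails---and pushing them into the $5$-dimensional bulk does not help, because the Whitney move must isotope the attaching circle within the level $4$-manifold where the handle is attached. The mechanism that actually works is different: circles in a $4$-manifold have codimension $3$ and are therefore determined up to ambient isotopy by free homotopy class, so handle slides realizing Tietze transformations of the presentation of $\pi_1$ do eliminate $1$-handles in dimension $5$; but that is a codimension-three unknotting argument, not a Whitney-type argument, and it requires bookkeeping you do not supply (in particular you must pass through Tietze stabilizations, i.e.\ the addition and later removal of $(1,2)$-pairs, which your claim of directly ``realizing every algebraic cancelling pair geometrically'' elides). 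More seriously, the final normalization is vastly under-detailed: Barden's classification does not fall out of ``Wall's theorem plus Smith normal form.'' The heart of Barden's proof is showing the gluing diffeomorphism can be reduced to products of the specific generators $f_k$, $g_j$, $g_{-1}$, and this requires tracking the second Stiefel--Whitney class jointly with the skew-symmetric linking pairing on the torsion subgroup of $H_2$; the divisibility $k_i\mid k_{i+1}$ encodes the normal form of that pairing (with the $w_2$-constraint), not merely the Smith normal form of an integer matrix, and the spin/non-spin case analysis producing the correct $X_j$ factor---which you acknowledge as a ``secondary difficulty''---is in fact where most of the work lies. As written, the proposal reconstructs the shape of the answer rather than its content.
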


\begin{example}Heegaard diagrams for some closed $5$-manifolds.
    \begin{enumerate}
        \item Three diagrams in \autoref{fig: stabilizations on a Heegaard diagram} are Heegaard diagrams of $S^5$.
        \item The top right of \autoref{fig: Kirby diagram for surface complement} is a Heegaard diagram of $S^1\times S^4$.
        \item \autoref{fig: S3 bundle over a surface} is a Heegaard diagram of the trivial $S^3$-bundle over a genus $g$ orientable surface $F_g$ when $n$ is even.
        \item \autoref{fig: S3 bundle over a surface} is a Heegaard diagram of the non-trivial $S^3$-bundle over a genus $g$ orientable surface $F_g$ when $n$ is odd.
        \item \autoref{fig: Wu manifold} is a Heegaard diagram of the Wu manifold (denoted $X_{-1}$ in \autoref{def: examples of Barden's manifolds}). The blue curve is the image of the red curve in the right of \autoref{fig: nontrivial s2bundle over s2 with a fiber} under the map $g_{-1}$. Here we can compute $H_2(X_{-1})\cong \mathbb{Z}_2$ from the algebraic intersection number $\alpha\cdot\beta=2$.
    \end{enumerate}
\end{example}

\begin{figure}[ht!]
\labellist
\small\hair 2pt
\pinlabel{$n$} at 5 100
\pinlabel{$g$} at 205 75
\pinlabel{$0$} at 162 15
\endlabellist
\centering
\includegraphics[width=0.42\textwidth]{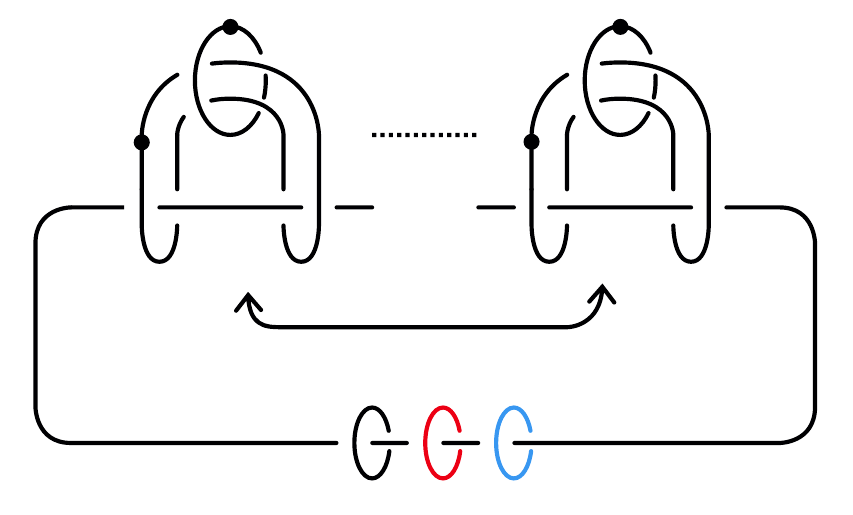}
\caption{An $S^3$-bundle over a orientable genus $g$ surface $F_g$. If we ignore the $0$-framed meridian, the red meridian, and the blue meridian, then this becomes a Kirby diagram of a $B^2$-bundle over $F_g$ with Euler number $n$. If we ignore only the blue meridian, by \autoref{pro:how to draw Heegaard diagrams for 5d 2handlebodies}, the diagram represents a $B^3$-bundle over $F_g$ which is the product of a $B^2$-bundle over $F_g$ and an interval. Therefore the original diagram is a Heegaard diagram of a $S^3$-bundle over $F_g$ by \autoref{pro: how to draw Heegaard diagrams for the twisted double}.}
\label{fig: S3 bundle over a surface}
\end{figure}

\begin{figure}[ht!]
\labellist
\small\hair 2pt
\pinlabel{$1$} at 50 180
\pinlabel{$-1$} at 600 180
\endlabellist
\centering
\includegraphics[width=0.65\textwidth]{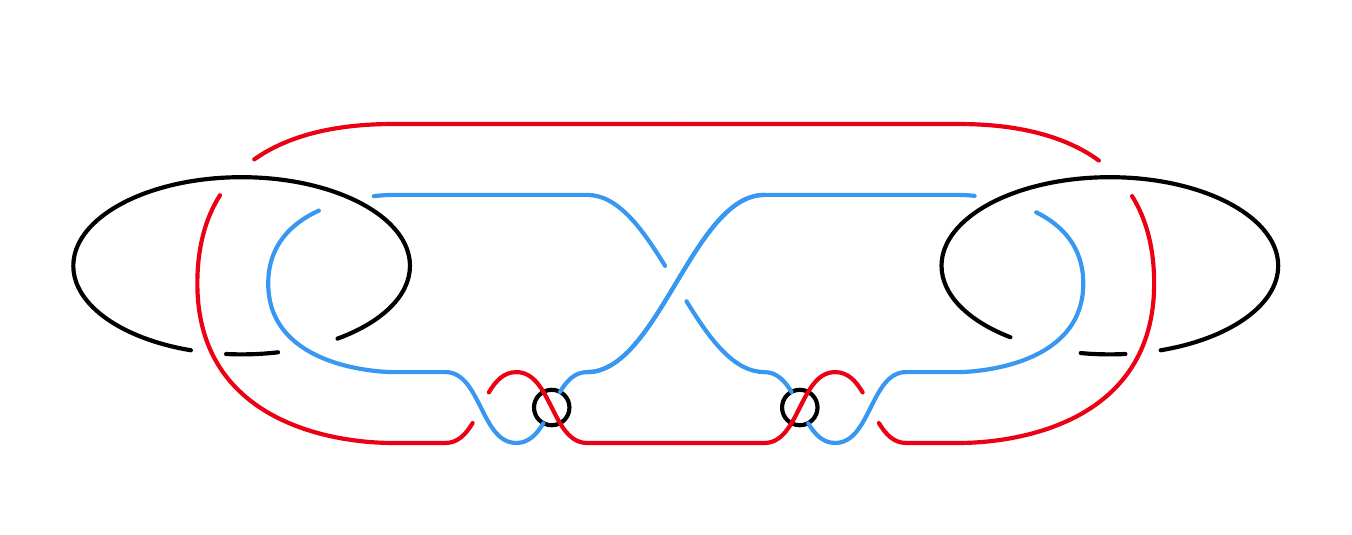}
\caption{Wu manifold.}
\label{fig: Wu manifold}
\end{figure}

\begin{figure}[ht!]
\labellist
\small\hair 2pt
\pinlabel{$0$} at 460 1913
\pinlabel{$1$} at 930 1908
\pinlabel{$0$} at 455 1643
\pinlabel{$0$} at 930 1643

\pinlabel{$1$} at 460 1553
\pinlabel{$1$} at 930 1548
\pinlabel{$0$} at 455 1283
\pinlabel{$0$} at 930 1283

\pinlabel{$1$} at 460 1163
\pinlabel{$1$} at 930 1160
\pinlabel{$0$} at 455 893
\pinlabel{$0$} at 930 893
 
\pinlabel{$1$} at 460 683
\pinlabel{$1$} at 930 680
\pinlabel{$0$} at 460 413
\pinlabel{$0$} at 930 560

\pinlabel{$1$} at 460 326
\pinlabel{$1$} at 930 323
\pinlabel{$0$} at 460 53
\pinlabel{$0$} at 930 53
\endlabellist
\centering
\includegraphics[width=0.72\textwidth]{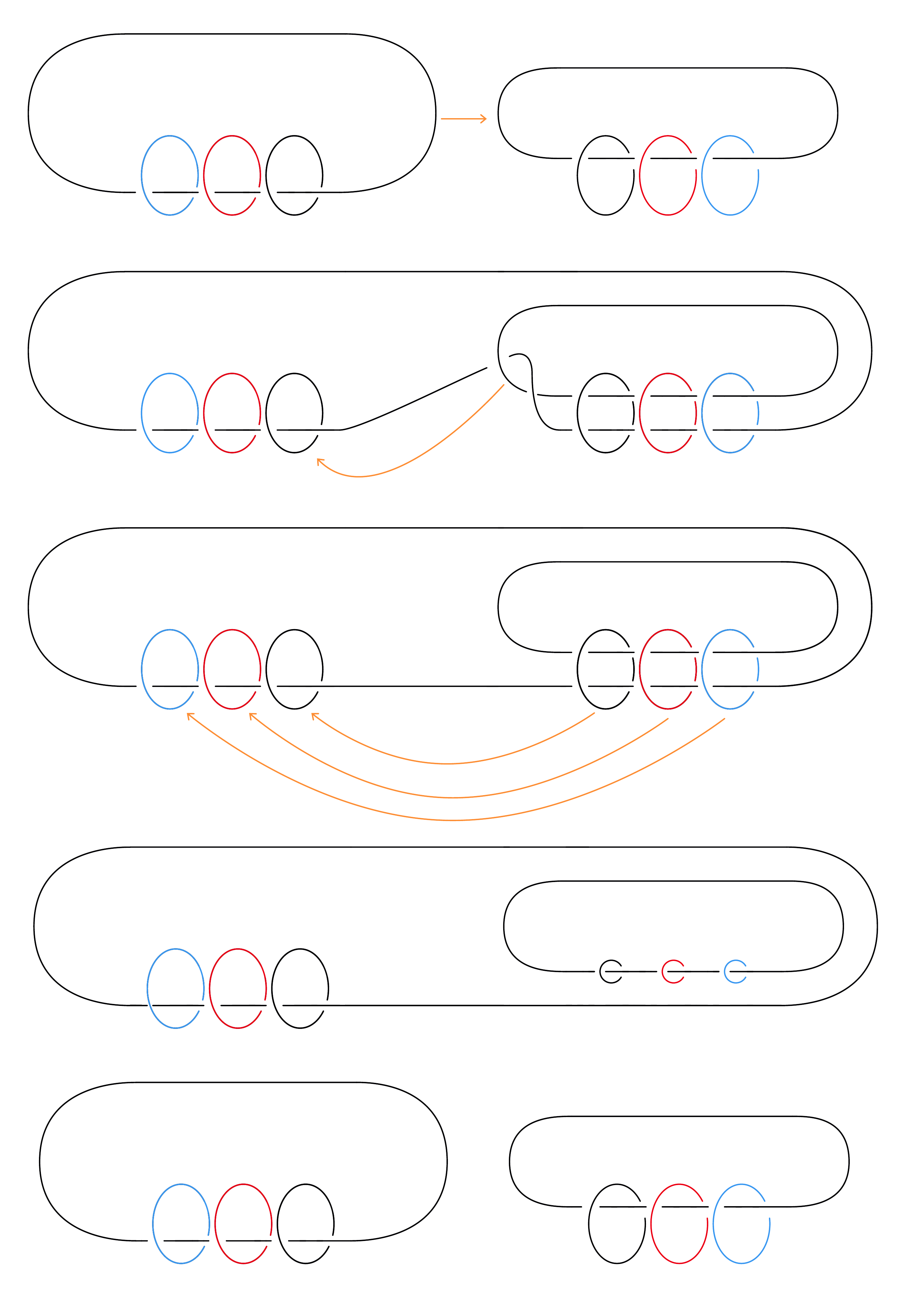}
\caption{$(S^2\times S^3)\#(S^2\tilde{\times} S^3)\cong (S^2\tilde{\times} S^3)\#(S^2\tilde{\times} S^3).$}
\label{fig: two copies of s3 bundle over s2}
\end{figure}

\begin{example}
$(S^2\times S^3)\#(S^2\tilde{\times} S^3)\cong (S^2\tilde{\times} S^3)\#(S^2\tilde{\times} S^3).$ 
\begin{proof}
The bottom of \autoref{fig: two copies of s3 bundle over s2}, which is a Heegaard diagram of $(S^2\tilde{\times} S^3)\#(S^2\tilde{\times} S^3)$, is obtained from the top of the same figure, which is a Heegaard diagram of $(S^2\times S^3)\#(S^2\tilde{\times} S^3)$), by handle slides along the orange guiding arcs. When a red circle (respectively, blue circle) is slid over another red circle (respectively, blue circle), the resulting banded unlink includes a long band and its dual band; see \autoref{fig: Banded unlink diagram for surgery along 3d1h}. These bands can be removed using a band/$2$-handle slide and a band/$2$-handle swim.
\end{proof}
\end{example}

\section{Gluck twists and Heegaard diagrams}\label{sec:Gluck twists}

\begin{definition}\label{def:Gluck twist} Fix $n\geq2$. Let $X$ be a closed, connected, orientable $(n+2)$-manifold. Let $K\subset X$ be an $n$-knot in $X$ with trivial normal bundle, that is, $\nu(K)\cong S^n\times B^2$. The \textit{Gluck twist} of $X$ along $K$ is the $(n+2)$-manifold \[X_K=(X\setminus \operatorname{int}(\nu(K)))\cup_\tau \nu(K),\] where $\partial(\nu(K))\cong S^n\times S^1$ and $\tau:S^n\times S^1\rightarrow S^n\times S^1$ is a diffeomorphism representing the non-trivial element of $\pi_1(SO(n))\cong \mathbb{Z}_2$.
\end{definition}

\begin{definition}\label{def: 5d cobordism gluck twist}
Fix $n\geq2$. Let $X$ be a closed, connected, orientable $(n+2)$-manifold. Let $K\subset X$ be an $n$-knot in $X$ with a trivial normal bundle, that is, $\nu(K)\cong S^n\times B^2$. Let $m_K$ be a meridian of $K$, identified with $\{x_0\}\times S^1\subset S^n\times S^1\cong\partial(\nu(K))$. We define \[W_{X,K}=(X\times [0,1])\cup_{m_K\times\{1\}}(B^2\times B^{n+1})\cup_{K\times\{1\}}(B^{n+1}\times B^2)\] to be the $(n+3)$-manifold obtained from $X\times [0,1]$ by attaching a $2$-handle along $m_K\times\{1\}$ with the non-trivial framing and a $(n+1)$-handle along $K\times\{1\}$.
\end{definition}

\begin{remark} There are two possible framings of the attaching sphere of an $(n+3)$-dimensional $2$-handle since $\pi_1(SO(n+1))\cong \mathbb{Z}_2$, and a unique framing of the attaching sphere of a $(n+3)$-dimensional $(n+1)$-handle since $\pi_n(SO(2))=1$.
\end{remark}

\begin{theorem}\label{thm: cobordism from X to XK}
$W_{X,K}$ is an $(n+3)$-dimensional cobordism from $X$ to $X_K$, i.e., $\partial_{-} (W_{X,K})\cong X$ and $\partial_{+} (W_{X,K})\cong X_K$.
\end{theorem}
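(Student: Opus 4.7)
The plan is to compute both boundary components of $W_{X,K}$ directly. Since both handles are attached to subsets of $X\times\{1\}$, we immediately obtain $\partial_{-}W_{X,K}=X\times\{0\}\cong X$, so the substantive content is showing $\partial_{+}W_{X,K}\cong X_K$.

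By taking $m_K=\{x_0\}\times S^1\subset\partial\nu(K)$ and pushing it slightly off $\nu(K)$, the tubular neighborhoods $\nu(K)$ and $\nu(m_K)$ can be made disjoint, so the two handles may be attached in either order. First attach the $(n+1)$-handle along $K\times\{1\}$: since $\pi_n(SO(2))=0$ for $n\geq 2$ the framing is unique, and the upper boundary becomes \[Y_1\;=\;(X\setminus\operatorname{int}(\nu(K)))\cup_{\mathrm{id}}(B^{n+1}\times S^1),\] glued along $\partial\nu(K)=S^n\times S^1=\partial B^{n+1}\times S^1$. Under this surgery, the meridian $m_K=\{x_0\}\times S^1\subset\partial\nu(K)$ is isotoped into the newly attached $B^{n+1}\times S^1$, where it becomes the core circle $\{0\}\times S^1$.

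Next attach the $2$-handle along $m_K\times\{1\}$ with the non-trivial framing in $\pi_1(SO(n+1))\cong\mathbb{Z}_2$, which amounts to $1$-surgery on the core $\{0\}\times S^1\subset B^{n+1}\times S^1$: remove a smaller tubular neighborhood $\operatorname{int}(\epsilon B^{n+1})\times S^1$ and glue back $B^2\times S^n$ by a diffeomorphism $S^1\times S^n\to S^n\times S^1$ representing the non-trivial class. The annular remainder $(B^{n+1}\setminus\operatorname{int}(\epsilon B^{n+1}))\times S^1\cong S^n\times[0,1]\times S^1$ is a collar of $\partial(X\setminus\operatorname{int}(\nu(K)))$ and can be absorbed, leaving \[\partial_{+}W_{X,K}\;\cong\;(X\setminus\operatorname{int}(\nu(K)))\cup_{\tau}(S^n\times B^2),\] which is precisely $X_K$ by \autoref{def:Gluck twist}.

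The hardest step will be verifying that the non-trivial framing of the $2$-handle corresponds to the specific Gluck regluing $\tau$, not merely to some other non-trivial identification. Both choices are classified by $\pi_1(SO(n+1))\cong\mathbb{Z}_2$, so this reduces to identifying the two generators in a local model of $\nu(K)\cup\nu(m_K)$. The geometric reason is that $m_K$ bounds a normal disk to $K$ meeting $K$ transversely in one point, so rotating the trivialization of $\nu(m_K)$ by the generator of $\pi_1(SO(n+1))$ rotates the trivialization of $\nu(K)$ by the generator as well, which is exactly how $\tau$ is defined.
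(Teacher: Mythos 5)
Your argument is correct and follows essentially the same route as the paper: both proofs attach the $(n+1)$-handle first (yielding the $n$-surgery $Y=(X\setminus\operatorname{int}\nu(K))\cup(B^{n+1}\times S^1)$), isotope the meridian $m_K$ to the core circle $\{0\}\times S^1$ of the new $B^{n+1}\times S^1$ piece, and then recognize the $2$-handle attachment with non-trivial framing as exactly the Gluck regluing $\tau$. Your version is slightly more explicit than the paper's, particularly about pushing $m_K$ off $\nu(K)$ so that the attaching regions are disjoint and about why the non-trivial element of $\pi_1(SO(n+1))$ recovers $\tau$ rather than some other identification; the paper asserts the latter with essentially no comment, so your final paragraph supplies a useful clarification rather than a detour.
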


\begin{proof}
Let $\phi:S^n\times B^2\hookrightarrow X$ be a framing of $K$ such that $\phi(S^2\times\{(0,0)\})=K$ and $\phi(S^n\times B^2)=\nu(K)$. Note that $\phi$ is unique up to isotopy because $\pi_n(SO(2))$ is trivial. We can then write $X_K$ as: \[X_K=(X\setminus \operatorname{int}(\phi(S^n\times B^2)))\cup_\tau \nu(K).\] Since $(B^{n+1}\times S^1) \setminus (B^{n+1}\times S^1)=\emptyset$, we can rewrite $X_K$ as:\[X_K=((X\setminus \operatorname{int}(\phi(S^n\times B^2)))\cup_{\phi|_{S^n\times S^1}} (B^{n+1}\times S^1))\setminus (B^{n+1}\times S^1)\cup_\tau \nu(K).\] Here, we can consider \[Y=(X\setminus \operatorname{int}(\phi(S^n\times B^2)))\cup_{\phi|_{S^n\times S^1}} (B^{n+1}\times S^1)\] as the result of $n$-surgery on $X$ along $K$, which corresponds to attaching an $(n+1)$-handle along $K$. Next, view $X_K$ as $1$-surgery on $Y$ along $\phi(\{pt\}\times S^1)$, which corresponds to attaching a $2$-handle along the meridian $m_K$ of $K$ with non-trivial framing. We can rearrange the handle attachment so that the $2$-handle is attached first, followed by the $(n+1)$-handle. Therefore, $W_{X,K}$ is a cobordism from $X$ to $X_K$ such that the bottom boundary $\partial_{-} (W_{X,K})=X\times\{0\}\cong X$ and the top boundary $\partial_{+} (W_{X,K})\cong X_K$.
\end{proof}

\begin{corollary}\label{thm: Heegaard diagrams for Gluck twists}
Let $(\Sigma,\alpha,\beta)=(X\#(S^n\tilde{\times}S^2),F,K\#F)$ be a triple, where $K\subset X$ is an $n$-knot in an $(n+2)$-manifold $X$ with trivial normal bundle, $F$ is a fiber of $S^n\tilde{\times}S^2$, and $(X\#S^n\tilde{\times}S^2, K\#F)=(X,K)\#(S^n\tilde{\times}S^2,F)$ is the connected sum of pairs. Then $W_{X,K}$ is diffeomorphic to the manifold obtained from $\Sigma\times[-1,1]$ by attaching an $(n+1)$-handle along $\alpha\times\{-1\}$ and an $(n+1)$-handle along $\beta\times\{1\}$.
\end{corollary}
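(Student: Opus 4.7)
The plan is to reorder the handle attachments defining $W_{X,K}$ so that the $2$-handle is attached first, revealing $\Sigma$ as an intermediate level, and then recover the $(n+1)$-handle attachment along $\beta = K \# F$ via a single handle slide. Since the attaching region of $h^2$ (a tubular neighborhood of $m_K \subset \partial \nu(K)$) and the attaching region of $h^{n+1}$ (the interior of $\nu(K)$) can be chosen disjoint in $X \times \{1\}$, the two handles of $W_{X,K}$ may be attached in either order.

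First attach $h^2$. The meridian $m_K$ bounds a meridian disk of $K$ in $X$, so $m_K$ is a local unknot, and the corresponding $1$-surgery on $X$ is a connected sum with the manifold obtained by surgery on the unknot in $S^{n+2}$ with the prescribed non-trivial framing. This is the non-trivial $S^n$-bundle over $S^2$, so the top boundary of $X \times [0,1] \cup h^2$ is $\Sigma = X \# (S^n \tilde{\times} S^2)$, and the belt sphere $\{0\} \times S^n$ of $h^2$ becomes the fiber $F = \alpha$ in the newly created summand. Dualizing, this cobordism from $X$ to $\Sigma$ is canonically identified with $M_\alpha = \Sigma \times [-1,0]$ together with an $(n+1)$-handle on $\alpha \times \{-1\}$; the framings match because $\pi_n(SO(2)) = 0$ for $n \geq 2$ makes the framing of $\alpha$ unique, and the chosen non-trivial framing of $h^2$ is exactly the compatible one.

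Next, because $K$ lies outside the attaching region of $h^2$, it survives into $\Sigma$ unchanged, and the attachment of $h^{n+1}$ becomes an $(n+1)$-handle attached to $\Sigma$ along this surviving copy of $K$. The attaching spheres $K$ and $K \# F$ in $\Sigma$ differ by a handle slide of $h^{n+1}$ over $h^2$: band-summing $K$ with a parallel push-off of the belt sphere $F$ through an arc running into the newly-introduced $S^n \tilde{\times} S^2$ summand --- an arc which exists because the connect-sum neck of $\Sigma$ contains collars of both $K$ and $F$ --- produces $K \# F$. This is the direct higher-dimensional analog of the $2$-handle-over-$1$-handle slide in Kirby calculus (\autoref{def:handle slides on a Kirby diagram}), and it preserves the diffeomorphism type. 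Therefore $W_{X,K}$ is diffeomorphic to $\Sigma \times [-1,1]$ with $(n+1)$-handles attached along $\alpha \times \{-1\}$ and $\beta \times \{1\}$, i.e.\ $W_{X,K} \cong M_\alpha \cup_\Sigma M_\beta$.

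The main obstacle is justifying the handle slide in the last step: one must check that sliding an $(n+1)$-handle over a $2$-handle in dimension $n+3$ is a valid move (the standard justification is to isotope the attaching region of the $(n+1)$-handle through a tubular neighborhood of the co-core disk of $h^2$, which exists since the co-core's boundary $F$ has trivial normal bundle in $\Sigma$) and that the resulting attaching sphere realizes exactly the pair connected sum $(X,K)\#(S^n \tilde{\times} S^2, F)$. Both points reduce to the fact that the connect-sum neck of $\Sigma$ can be identified with a tubular neighborhood of the co-core of $h^2$, so the natural band for the connected sum of pairs and the sliding band of the handle slide coincide.
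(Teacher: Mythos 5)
Your approach matches the paper's for the first half: attach $h^2$ first, identify the new upper boundary with $\Sigma = X\#(S^n\tilde{\times}S^2)$ using that $m_K$ bounds a small disk (so the $1$-surgery is a connected sum with the result of surgery on an unknot, which is $S^n\tilde{\times}S^2$ with the non-trivial framing), and identify the belt sphere with $\alpha = F$. The trouble is in how you recover $\beta = K\#F$. What is actually true, and what the paper asserts, is that the surviving copy of $K$, viewed in $\Sigma$ via the natural connect-sum decomposition, is \emph{already isotopic} to $K\#F$; no further move is needed. Your argument instead treats the surviving $K$ as if it sat in the $X$-summand and then produces $K\#F$ by band-summing with a push-off of $F$. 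That picture is wrong. Any ball $B\subset X$ containing $\nu(m_K)$ and realizing the connect sum must meet $K$ in a disk $D$, because $m_K$ cannot be isotoped off of a neighborhood of $K$ unless $K$ is unknotted; so the surviving $K$ already passes through the connect-sum neck, and its class in $H_n(\Sigma)\cong H_n(X)\oplus H_n(S^n\tilde{\times}S^2)$ is $[K]_X + [F]$, not $[K]_X$. (One checks this against the dual $2$-cycle built from the meridian disk of $K$ and $B^2\times\{pt\}$, which meets both $F$ and the surviving $K$ once.) Band-summing the surviving $K$ with a parallel $F$ would therefore give class $[K]_X + 2[F]$, not $[K\#F]$.

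Relatedly, ``sliding an $(n+1)$-handle over a $2$-handle'' is not a Cerf-theoretic handle slide, which is only defined between handles of the same index. As the paper notes for the $2$-over-$1$ slide in Kirby calculus, the analogous move is merely a way of drawing an \emph{isotopy} of the attaching sphere inside $\Sigma$, so it cannot change the isotopy class of $K$; either the slide is vacuous (and hence cannot turn a sphere that is not isotopic to $K\#F$ into one that is), or your two claims---that $K$ survives ``unchanged'' and that it ``becomes'' $K\#F$ after the slide---are in tension. The identification of the ``connect-sum neck'' with ``a tubular neighborhood of the co-core of $h^2$'' is also dimensionally off: the neck is a collar of a separating $S^{n+1}$, while the boundary $F$ of the co-core is an $S^n$ whose tubular neighborhood in $\Sigma$ is $B^2\times S^n$. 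To fix the argument, show directly that the disk $D = K\cap B$, viewed in the punctured $S^n\tilde{\times}S^2$ summand, is isotopic rel boundary to a punctured fiber; then the surviving $K = (K\setminus D)\cup D$ is the pair connected sum $(X,K)\#(S^n\tilde{\times}S^2,F)$ on the nose, which is the route the paper takes.
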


\begin{proof}
Let $W_{X,K}=(X\times [0,1])\cup_{m_k\times\{1\}}(B^2\times B^{n+1})\cup_{K\times\{1\}}(B^{n+1}\times B^2)$ in \autoref{def: 5d cobordism gluck twist}. It suffices to show that $\partial_{+}(X\times[0,1]\cup_{m_K\times\{1\}}(B^2\times B^{n+1}))$ is diffeomorphic to $X\#(S^n\tilde{\times}S^2)$, the belt sphere of the $2$-handle is $F$ in $S^n\tilde{\times}S^2\subset X\#(S^n\tilde{\times}S^2)$, and the attaching sphere of the $(n+1)$-handle is $K\#F$ in $X\#(S^2\tilde{\times}S^2)$. By the construction of $W_{X,K}$, $\partial_{+}(X\times[0,1]\cup_{m_K\times\{1\}}(B^2\times B^{n+1}))$ is diffeomorphic to the surgery on $X\times\{1\}$ along the meridian $m_K\times\{1\}$ with non-trivial framing. Since $m_K\times\{1\}$ is null-homologous, the surgery is diffeomorphic to $X\#(S^n\tilde{\times}S^2)$, so the belt sphere of the $2$-handle is a fiber $F$ of $S^n\tilde{\times}S^2\subset X\#(S^2\tilde{\times}S^2)$. Clearly the attaching sphere $K\times\{1\}$ of the $(n+1)$-handle is embedded in $\partial_{+}(X\times[0,1]\cup_{m_K\times\{1\}}(B^2\times B^3))\cong X\#(S^2\tilde{\times}S^2)$ and is isotopic to $K\#F$ in $X\#(S^2\tilde{\times}S^2)$.
\end{proof}

\begin{remark}\label{rem: Heegaard diagram of Gluck twist}
The triple $(\Sigma,\alpha,\beta)=(X\#(S^n\tilde{\times}S^2),F,K\#F)$ is a Heegaard diagram of $W_{X,K}$ when $K\subset X$ is a $2$-knot with trivial normal bundle in a $4$-manifold $X$, that is, $W_{X,K}\cong M_\alpha\cup_\Sigma M_\beta$. Therefore, $\Sigma(\alpha)\cong X$ and $\Sigma(\beta)\cong X_K$.
\end{remark}

\begin{remark}
The $(n+2)$-manifolds $X$ and $X_K$ may not be 
diffeomorphic but they become diffeomorphic after connected summing with $S^n\tilde{\times}S^2$.
\end{remark}

\begin{corollary}
The $(n+2)$-manifolds $X\#(S^n\tilde{\times}S^2)$ and $X_K\#(S^n\tilde{\times}S^2)$ are diffeomorphic.
\end{corollary}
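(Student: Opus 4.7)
The plan is to apply Corollary~\ref{thm: Heegaard diagrams for Gluck twists} twice---once to $(X,K)$ and once to an appropriate ``dual'' pair on the Gluck-twist side---and then to compare the middle levels of the two resulting cobordisms. Applying Corollary~\ref{thm: Heegaard diagrams for Gluck twists} to $(X,K)$ identifies the middle level of $W_{X,K}$, namely the cross-section sitting between its unique $2$-handle and its unique $(n+1)$-handle, with $\Sigma=X\#(S^n\tilde{\times}S^2)$. Since $2<n+1$ for $n\geq 2$, any two regular level sets between these critical levels are diffeomorphic, so ``the'' middle level of $W_{X,K}$ is a well-defined closed $(n+2)$-manifold.

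Next, I introduce the dual knot $K'\subset X_K$ as the image of the core $S^n\times\{0\}\subset S^n\times B^2$ under the regluing map $\tau$ in the definition of $X_K$. This is an $n$-knot with trivial normal bundle, and because $\tau$ represents the unique order-two element of $\pi_1(SO(n+1))\cong\mathbb{Z}_2$, we have $(X_K)_{K'}\cong X$. Applying Corollary~\ref{thm: Heegaard diagrams for Gluck twists} to $(X_K,K')$ then yields a cobordism $W_{X_K,K'}$ from $X_K$ to $X$ whose middle level is $X_K\#(S^n\tilde{\times}S^2)$.

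The heart of the proof is the identification $W_{X,K}\cong W_{X_K,K'}$ as $(n+3)$-dimensional cobordisms (with reversed direction). To establish this, I turn $W_{X,K}$ upside down: the original $2$-handle becomes a dual $(n+1)$-handle and the original $(n+1)$-handle becomes a dual $2$-handle, and I verify directly that the resulting handle decomposition starting from $X_K$ coincides with the one defining $W_{X_K,K'}$. Concretely, the belt sphere of the $(n+1)$-handle of $W_{X,K}$ (attached along $\beta=K\#F$) appears in $X_K$ as a circle lying in the reattached $S^n\times B^2$, which can be isotoped to a meridian $m_{K'}$ of $K'$; and the belt sphere $\alpha=F$ of the original $2$-handle, traced upside down through the cobordism, corresponds to $K'$ in $X_K$. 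Once this identification is in hand, both $X\#(S^n\tilde{\times}S^2)$ and $X_K\#(S^n\tilde{\times}S^2)$ appear as middle levels of the same cobordism, so they are diffeomorphic.

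The main obstacle is the framing check in the duality identification: while recognizing the attaching circle of the dual $2$-handle as a meridian of $K'$ is geometrically transparent, proving that the induced framing is the non-trivial element of $\pi_1(SO(n+1))\cong\mathbb{Z}_2$---which is what guarantees that the corresponding $1$-surgery on this meridian yields $X_K\#(S^n\tilde{\times}S^2)$ rather than $X_K\#(S^n\times S^2)$---requires carefully tracking the $\mathbb{Z}_2$-twist through the handle duality, using that the original $2$-handle was itself attached with non-trivial framing.
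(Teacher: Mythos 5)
Your core approach is the same as the paper's: identify the middle level of $W_{X,K}$, read it off once from below as surgery on $X$ along $m_K$ (giving $X\#(S^n\tilde{\times}S^2)$), then turn the cobordism upside down and read it off again from $X_K$. The paper's proof is somewhat terser and does not attempt to identify the full upside-down cobordism with $W_{X_K,K'}$; it only identifies the middle level as the result of surgery on $X_K$ along $m_{K'}$. Your proposal is thus proving more than is strictly needed for the corollary — the corollary only requires that the middle level be simultaneously $X\#(S^n\tilde{\times}S^2)$ and $X_K\#(S^n\tilde{\times}S^2)$, not that the cobordisms themselves agree.

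The obstacle you flag — that one must verify the induced framing on $m_{K'}$ is the non-trivial element of $\pi_1(SO(n+1))\cong\mathbb{Z}_2$ — is a genuine point that the paper's proof does not explicitly address. However, your proposed source of that information is misplaced. You write that the check should use ``that the original $2$-handle was itself attached with non-trivial framing.'' But the dual $2$-handle whose framing you need to pin down is the dual of the \emph{$(n+1)$-handle}, not of the original $2$-handle; under handle duality the original $2$-handle becomes a dual $(n+1)$-handle, whose attaching framing is then automatic since $\pi_n(SO(2))=1$. The framing of the dual $2$-handle is instead the belt-region framing of the $(n+1)$-handle of $W_{X,K}$, viewed inside $X_K$, and is determined by how that belt region sits relative to a disk in $X_K$ bounded by $m_{K'}$. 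Tracking the twist of the original $2$-handle through the cobordism will not produce this datum; what is actually needed is a direct local comparison of the belt-region framing of the $(n+1)$-handle with the bounding-disk framing of $m_{K'}$ in $\nu(K')\subset X_K$, or, equivalently, a computation in a standard model of the $(n+1)$-handle attached along $K\#F$.
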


\begin{proof}
Let $\Sigma$ be the middle $(n+2)$-manifold of the cobordism $W_{X,K}$ from $X$ to $X_K$, which is the surgery on $X$ along a meridian $m_K$ of $K$. If we turn the handle decomposition of $W_{X,K}$ upside down, we can view $\Sigma$ as the result of performing surgery on $X_K$ along $m_{K'}$, where $K'\subset X_K$ is the canonical $n$-knot along which the Gluck twist of $X_K$ is $X$, and $m_{K'}$ is the meridian of $K'$. We note that $m_K$ and $m_{K'}$ bound meridian disks in $X$ and $X_K$, respectively. Therefore, $X\#(S^n\tilde{\times}S^2)\cong\Sigma\cong X_K\#(S^n\tilde{\times}S^2)$.
\end{proof}

\begin{remark}We focus on the case where $X$ is a $4$-manifold and $K\subset X$ is a $2$-knot with trivial normal bundle.
    \begin{enumerate}
        \item If $K$ is unknotted, then $X_K$ is diffeomorphic to $X$.
        \item If $K$ is null-homotopic in $X$, then $X$ and $X_K$ are homotopy equivalent by \cite{gluck1962embedding}.
        \item If $X$ is a simply connected $4$-manifold and $K$ is null-homotopic in $X$, then $X_K$ is homeomorphic to $X$ by Freedman \cite{freedman1982topology}.
        \item Gluck twists of $S^4$ along non-trivial $2$-knots may be potential counterexamples to the smooth $4$-dimensional Poincaré conjecture. Some families of $2$-knots in $S^4$ have Gluck twists that are known to be diffeomorphic to the standard $S^4$ \cite{gluck1962embedding,Gordon1976,melvin1977blowing,pao1978non,litherland1979deforming,nash2012gluck,naylor2022gluck,gabai2023doubles}.
        \item If $K$ is not null-homotopic, the diffeomorphism type may change. For example, the Gluck twist of $S^2\times S^2$ along $\{x_0\}\times S^2$ is diffeomorphic to $S^2\tilde{\times}S^2$.
    \end{enumerate}   
\end{remark}

\begin{figure}[ht!]
\labellist
\small\hair 2pt
\pinlabel{$1$} at 450 285
\pinlabel{$0$} at 600 285

\pinlabel{$1$} at 700 285
\pinlabel{$0$} at 855 285

\pinlabel{$1$} at 1360 285
\pinlabel{$0$} at 1515 285
\pinlabel{\textcolor[RGB]{56,151,241}{$0$}} at 1180 340
\pinlabel{\textcolor[RGB]{56,151,241}{$0$}} at 1180 200
\endlabellist
\centering
\includegraphics[width=1\textwidth]{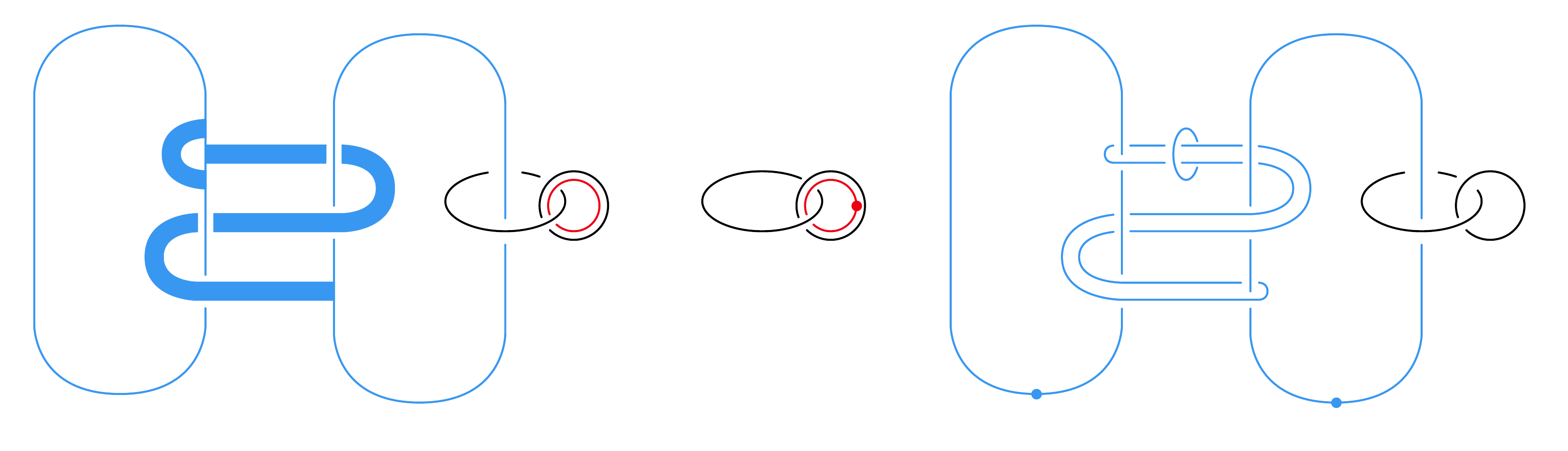}
\caption{\textbf{Left}: A Heegaard diagram $(\Sigma,\alpha,\beta)$ of a cobordism $W_{S^4,K}$ from $S^4$ to $S^4_K$. Here $\Sigma=S^2\tilde{\times}S^2$, red circle represents a fiber $F$ of $S^2\tilde{\times} S^2$, and blue banded unlink represents a connected sum of spun trefoil $K$ and $F$.    \textbf{Middle}: A Kirby diagram of $\Sigma(\alpha)=S^4$. \textbf{Right}: A Kirby diagram of $\Sigma(\beta)=S^4_K$.}
\label{fig: Gluck twists along a spun trefoil}
\end{figure}

\begin{example}
The left of \autoref{fig: Gluck twists along a spun trefoil} shows a Heegaard diagram of a cobordism $W_{S^4,K}$ from $S^4$ to the Gluck twist $S^4_K$ of $S^4$ along a spun trefoil $K$. By \cite{gluck1962embedding}, the Gluck twist of $S^4$ along any spun knot is diffeomorphic to $S^4$, so $S^4_K\cong S^4$. We can also verify that the right side of \autoref{fig: Gluck twists along a spun trefoil} represents $S^4$ by performing a sequence of Kirby moves.

Another way to show that $S^4_K\cong S^4$ is to observe that $K\#F$ is isotopic to $F$; see \autoref{fig: connected sum of spun trefoil and F is isotopic to F}. The key idea is that $K\# F$ arises from surgery along a $3$-dimensional $1$-handle whose core is $c_1$, as shown in the top left of \autoref{fig: connected sum of spun trefoil and F is isotopic to F}. We can isotope the core $c_1$ to $c_3$, shown in the bottom left of the same figure. This isotopy extends to one between the corresponding $1$-handles $h_1$ and $h_3$, so the results of surgery along $h_1$ and $h_3$ are isotopic. In particular, surgery along $h_3$ yields the fiber $F$.

This strategy applies to any ribbon knot $R \subset S^4$, implying that $S^4_R \cong S^4$. Note that every spun knot is ribbon. Hughes, Kim, and Miller \cite{hughes2020isotopies} showed that for any ribbon knot $R \subset S^4$, the connected sum $R \# \mathbb{C}P^1 \subset \mathbb{C}P^2$ is isotopic to $\mathbb{C}P^1 \subset \mathbb{C}P^2$ via a sequence of moves involving long bands and their duals, as in \autoref{fig: Banded unlink diagram for surgery along 3d1h}. These moves correspond to isotopies of the cores of $3$-dimensional $1$-handles. This result implies $S^4_R \cong S^4$ by Melvin's theorem, which states that for every $2$-knot $K\subset S^4$, $S^4_K\cong S^4$ if and only if $(\mathbb{C}P^2,K\#\mathbb{C}P^1)\cong(\mathbb{C}P^2,\mathbb{C}P^1)$ \cite{melvin1977blowing}.

An advantage of using the core of a $3$-dimensional $1$-handle (rather than a long band and its dual band, as in \autoref{fig: Banded unlink diagram for surgery along 3d1h}) is that a homotopy of the core directly induces an isotopy of the $1$-handle since homotopy of $1$-manifolds implies isotopy in dimension $4$. Such isotopies are also easier to visualize in a Kirby diagram.
\end{example}

\begin{figure}[ht!]
\labellist
\small\hair 2pt
\pinlabel{$1$} at 435 730
\pinlabel{$0$} at 600 750
\pinlabel{\textcolor[RGB]{253,141,51}{$c_1$}} at 265 640
 
\pinlabel{$1$} at 1115 730
\pinlabel{$0$} at 1280 750
\pinlabel{\textcolor[RGB]{253,141,51}{$c_2$}} at 950 640

\pinlabel{$1$} at 435 270
\pinlabel{$0$} at 600 290
\pinlabel{\textcolor[RGB]{253,141,51}{$c_3$}} at 265 250

\pinlabel{$1$} at 1115 270
\pinlabel{$0$} at 1280 290
\endlabellist
\centering
\includegraphics[width=.85\textwidth]{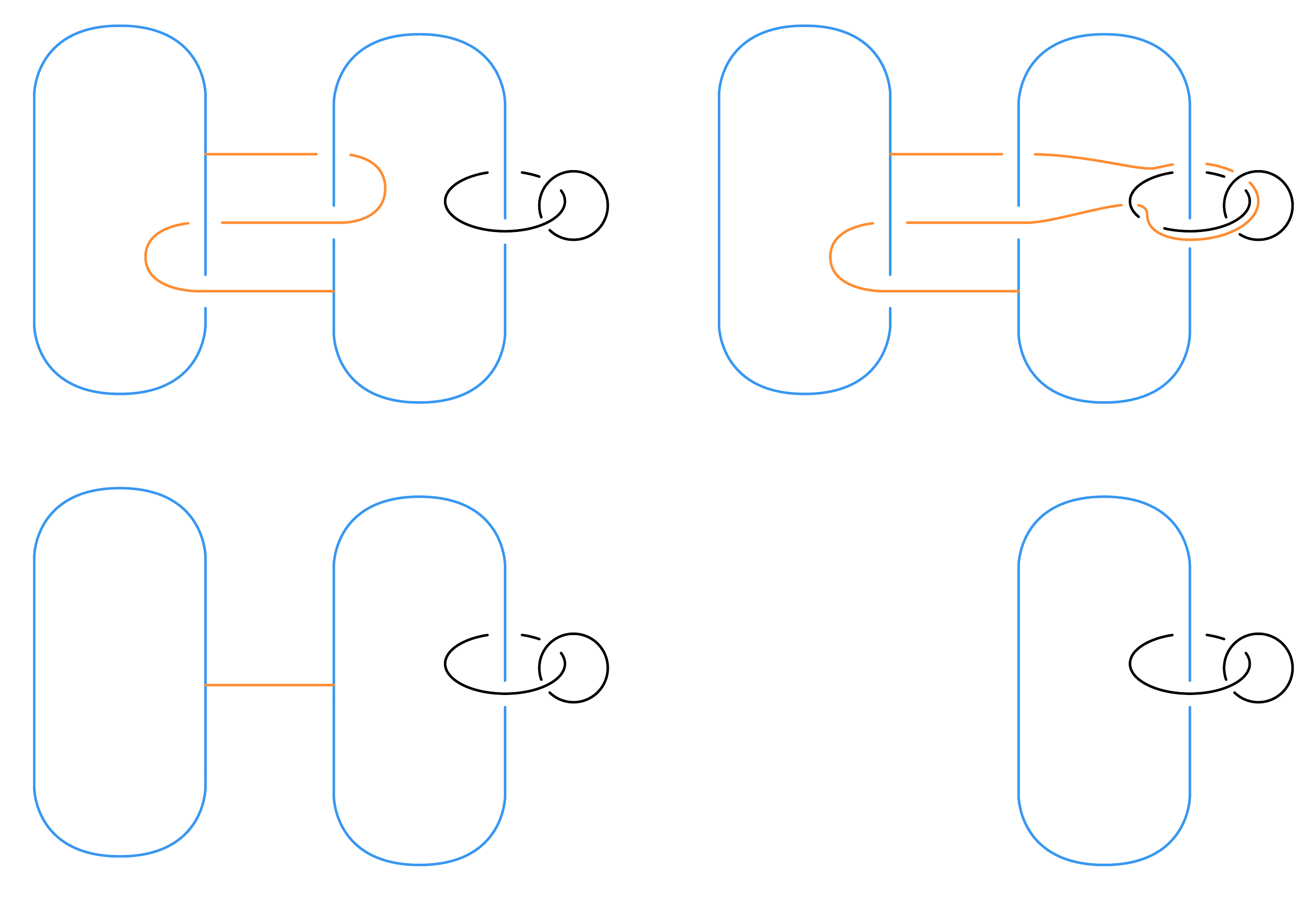}
\caption{The arc  $c_2$ is obtained by sliding $c_1$ over the $1$-framed unknot. Note that the Kirby diagram is drawn on the boundary $S^3 = \partial B^4$, and $c_2$ lies in $S^3$. Push $c_2$ into the interior of $B^4$, perform a small isotopy there, then pull it back into $S^3$ and apply another small isotopy to obtain $c_3$. The curve $c_3$ can be cancelled with the blue component on the left side, yielding the diagram shown at the bottom right. This implies that $S^4_K\cong S^4$.}
\label{fig: connected sum of spun trefoil and F is isotopic to F}
\end{figure}

\begin{example}
Let $\gamma\subset X$ be a simple closed curve in a $4$-manifold $X$. Let $X(\gamma)$ be the surgery of $X$ along $\gamma$ with trivial framing, and let $X(\gamma)'$ be the surgery of $X$ along $\gamma$ with non-trivial framing. Then $X(\gamma)'$ is the Gluck twist of $X$ along a meridian sphere $S_\gamma$ of $\gamma$, where $S=\{x_0\}\times S^2\subset S^1\times S^2=\partial(\gamma\times B^3)=\nu(\gamma)$; see \autoref{fig: cobordism from spun lens space to twisted spun lens space}. For example, if $X=S^4$ and $\gamma$ is any simple closed curve in $X$, then $X(\gamma)'=S^2\tilde{\times}S^2$ is the Gluck twist of $X(\gamma)=S^2\times S^2$ along the fiber $\{x_0\}\times S^2\subset S^2\times S^2$. 
\end{example}

\HeegaardGluck

\begin{proof}
We prove $(1)\Rightarrow(2)\Rightarrow(4)\Rightarrow(1)$ and $(2)\Leftrightarrow(3)$.

$(1)\Rightarrow(2)$. Assume $S^4_K$ is diffeomorphic to $S^4$. By \autoref{thm: cobordism from X to XK} and \autoref{thm: Heegaard diagrams for Gluck twists}, we have $W_{S^4,K}\cong M_\alpha\cup_\Sigma M_\beta$ with $\Sigma(\alpha)\cong S^4$ and $\Sigma(\beta)\cong S^4_K$; see also \autoref{rem: Heegaard diagram of Gluck twist}. Since $\Sigma(\beta)\cong S^4_K\cong S^4$, we can construct a closed $5$-manifold $\widehat{M_\alpha}\cup_\Sigma \widehat{M_\beta}$ from the cobordism $M_\alpha\cup_\Sigma M_\beta$ by gluing two $5$-balls along its boundary components. Its second homology group is $H_2(\widehat{M_\alpha}\cup_\Sigma \widehat{M_\beta})\cong \mathbb{Z}$, so by the classification of simply-connected $5$-manifolds \cite{barden1965simply} (see \autoref{thm: Barden's classification}), it is diffeomorphic to either $S^2\times S^3$ or $S^2\tilde{\times}S^3$. Since the middle level $\Sigma$ of $\widehat{M_\alpha}\cup_\Sigma \widehat{M_\beta}$ is $S^2\tilde{\times}S^2$, we conclude that $\widehat{M_\alpha}\cup_\Sigma \widehat{M_\beta}$ is diffeomorphic to $S^2\Tilde{\times}S^3$. Therefore $M_\alpha\cup_\Sigma M_\beta$ is diffeomorphic to a twice-punctured $S^2\Tilde{\times}S^3$.

$(2)\Rightarrow (4)$. Consider again $\widehat{M_\alpha}\cup_\Sigma \widehat{M_\beta}\cong S^2\tilde{\times}S^3$. The manifold $\widehat{M_\beta}\cong S^2\tilde{\times}B^3$ is obtained from $B^5$ by attaching a $2$-handle along the unknot with the non-trivial framing. Hence, the pair $(S^2\tilde{\times}S^2,K\#F)=(\Sigma,\beta)$ can be considered as a pair of the boundary of $\widehat{M_\beta}$ and the belt sphere of the $2$-handle. Thus, $(S^2\Tilde{\times}S^2,K\#F)$ is diffeomorphic to $(S^2\Tilde{\times}S^2,F)$.

$(4)\Rightarrow (1)$. The Gluck twist $S^4_K$ is diffeomorphic to $\Sigma(\beta)$, which is the result of surgery on $S^2\tilde{\times}S^2$ along $K\#F$. Since $(S^2\Tilde{\times}S^2,K\#F)\cong(S^2\Tilde{\times}S^2,F)$ and the surgery on $S^2\tilde{\times}S^2$ along $F$ is diffeomorphic to $S^4$, the Gluck twist $S^4_K$ is diffeomorphic to $S^4$.

$(2)\Leftrightarrow(3)$. The manifold $S^2\tilde{\times}S^3$ is obtained by gluing two copies of $S^2\tilde{\times}B^3$ along the identity map on their common boundary, which is $S^2\tilde{\times}S^2$. Each copy $S^2\tilde{\times}B^3$ is obtained from $B^5$ by attaching a $2$-handle along the unknot with the non-trivial framing. The belt sphere of the $2$-handle is a fiber $F$ of $S^2\tilde{\times}S^2$, so the triple $(S^2\tilde{\times}S^2,F,F)$ is a Heegaard diagram not only for $S^2\tilde{\times}S^3$ but also for a twice-punctured $S^2\tilde{\times}S^3$ since the latter is obtained by removing two $5$-balls. By \autoref{thm: Heegaard diagrams representing diffeomorphic 5-manifolds}, statements $(2)$ and $(3)$ are equivalent.
\end{proof}

\begin{figure}[ht!]
\labellist
\small\hair 2pt

\pinlabel{$p$} at 103 333
\pinlabel{$0$} at 205 340
\pinlabel{$0$} at 160 420

\pinlabel{$p$} at 338 333
\pinlabel{$0$} at 440 340
\pinlabel{$1$} at 395 420

\pinlabel{$p$} at 166 110
\pinlabel{$0$} at 268 117
\pinlabel{$0$} at 226 197
\pinlabel{$0$} at 390 117
\pinlabel{$1$} at 345 197
\endlabellist
\centering
\includegraphics[width=0.55\textwidth]{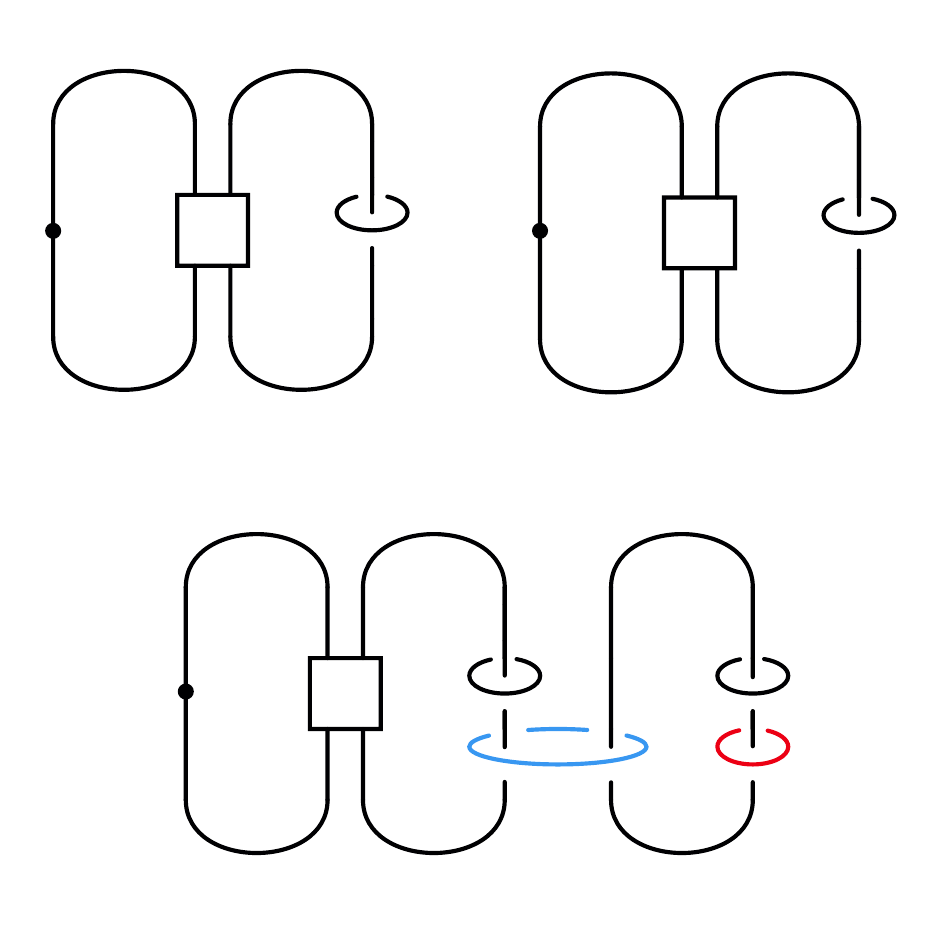}
\caption{\textbf{Top left}: Surgery $M(\gamma_p)$ on $S^1\times S^3$ along a circle $\gamma_p$ representing $p\in \mathbb{Z}\cong\pi_1(S^1\times S^3)$ with trivial framing; see \autoref{pro: Kirby diagram for 1-surgeries} and \autoref{fig: Kirby diagram for 1-surgery }. \textbf{Top right}: Surgery $M(\gamma_p)'$ on $S^1\times S^3$ along a circle $\gamma_p$ representing $p\in \mathbb{Z}\cong \pi_1(S^1\times S^3)$ with non-trivial framing. \textbf{Bottom}: A Heegaard diagram of a cobordism $W_{X,K}$ from $X=M(\gamma_p)$ to $X_K=M(\gamma_p)'$, where $M(\gamma_p)'$ is the Gluck twist of $M(\gamma_p)$ along a meridian sphere $S$ of $\gamma_p$. In this diagram, the blue circle and the red circle represent $S\#F$ and $F$, respectively, where $F$ is a fiber of $S^2\tilde{\times}S^2$. Replacing the red circle with a dotted circle produces the diagram in the top left after obvious Kirby moves, and similarly, replacing the blue circle with a dotted circle yields the diagram in the top right.}
\label{fig: cobordism from spun lens space to twisted spun lens space}
\end{figure}

\bibliographystyle{alpha}
\bibliography{refs} 
\end{document}